\newlength{\defbaselineskip}
\newcounter{marnote}
\newcommand{\setlinespacing}[1]%
           {\setlength{\baselineskip}{#1 \defbaselineskip}}
\theoremstyle{plain}
\newtheorem{theorem}{Theorem}[section]
\newtheorem{corollary}[theorem]{Corollary}
\newtheorem{lemma}[theorem]{Lemma}
\newtheorem{prop}[theorem]{Proposition}
\newtheorem{lemmaAlph}{Lemma}
\theoremstyle{definition}
\theoremstyle{remark}
\numberwithin{equation}{section}
\begin{document}


\title{Gradient estimates for solutions of the Lam\'{e} system with partially infinite coefficients}

\author{JiGuang Bao\footnote{School of Mathematical Sciences,
Beijing Normal University, Laboratory of Mathematics
and Complex Systems, Ministry of
   Education, Beijing 100875,
China. Email: jgbao@bnu.edu.cn.}\quad HaiGang
Li\footnote{
School of Mathematical Sciences,
Beijing Normal University, Laboratory of Mathematics
and Complex Systems, Ministry of
   Education, Beijing 100875,
China.
 Corresponding author. Email:
hgli@bnu.edu.cn.}\quad and\quad YanYan Li\footnote{Department of
Mathematics, Rutgers University, 110 Frelinghuysen Rd, Piscataway,
NJ 08854, USA. Email: yyli@math.rutgers.edu.} }

\date{}

\maketitle

\begin{abstract}
We establish upper bounds on the blow up rate
 of the gradients of solutions
of the Lam\'e system with partially infinite coefficients in
dimension two as the distance
between the surfaces of discontinuity of the coefficients
of the system tends to zero.
\end{abstract}

\setcounter {section} {0}

\section{Introduction}

We consider the Lam\'{e} system in linear elasticity.
 Let $\Omega\subset\mathbb{R}^{d}$, $d\geq2$,
 be a bounded open set with $C^{2}$ boundary,
and
$D_{1}$ and $D_{2}$ be
 two disjoint strictly
convex open sets in $\Omega$ with
$C^{2,\gamma}$ boundaries, $0<\gamma<1$, which are $\epsilon$-distance
 apart and far away
from $\partial{\Omega}$.  More precisely,
\begin{equation}\label{omega}
\begin{array}{l}
\displaystyle \overline{D}_{1},\overline{D}_{2}\subset\Omega,\quad\mbox{the principle curvatures of }\partial{D}_{1},\partial{D}_{2}\geq\kappa_{0}>0,\\
\displaystyle \epsilon:=\mathrm{dist}(D_{1},D_{2})>0,\quad \mathrm{dist}(D_{1}\cup{D}_{2},\partial{\Omega})>\kappa_{1}>0,
\end{array}
\end{equation}
where $\kappa_{0},\kappa_{1}$ are constants independent of $\epsilon$.

Denote
$$\widetilde{\Omega}:=\Omega\setminus\overline{D_{1}\cup{D}_{2}}.
$$
We assume that $\widetilde{\Omega}$ and $D_{1}\cup{D}_{2}$ are occupied
 by two different homogeneous  and isotropic  materials
 with different Lam\'{e} constants $(\lambda,\mu)$ and $(\lambda_{1},\mu_{1})$. Then the elasticity tensors for the inclusions and the background can be written, respectively, as  $\mathbb{C}^{1}$ and $\mathbb{C}^{0}$, with
$$C_{ij\,kl}^{1}=\lambda_{1}\delta_{ij}\delta_{kl}+\mu_{1}(\delta_{ik}\delta_{jl}+\delta_{il}\delta_{jk}),$$
and
$$C_{ij\,kl}^{0}=\lambda\delta_{ij}\delta_{kl}+\mu(\delta_{ik}\delta_{jl}+\delta_{il}\delta_{jk}),$$
where $i,j,k,l=1,2,\cdots,d$ and $\delta_{ij}$ is the Kronecker symbol: $\delta_{ij}=0$ for $i\neq{j}$, $\delta_{ij}=1$ for $i=j$.

Let $u=\left(u^1,u^2,\cdots,u^d\right)^{T}:~\Omega\rightarrow\mathbb{R}^{d}$ denote the displacement field.
For a given vector-valued function $\varphi$, we consider the following Dirichlet problem
\begin{equation}\label{system}
\begin{cases}
\nabla\cdot\bigg(\left(\chi_{\widetilde{\Omega}}\mathbb{C}^{0}+\chi_{D_{1}\cup{D}_{2}}\mathbb{C}^{1}\right)e(u)\bigg)=0,
&\mbox{in}~\Omega,\\
u=\mathbf{\varphi},&\mbox{on}~\partial{\Omega},
\end{cases}
\end{equation}
where $\chi_{D}$ is the characteristic function of $D$,
$$e(u):=\frac{1}{2}\left(\nabla{u}+(\nabla{u})^{T}\right)$$
is the strain tensor.

We assume that the standard ellipticity
 condition holds for \eqref{system}, that is,
$$
\mu>0,\quad\,d\lambda+2\mu>0;\quad\quad\mu_{1}>0,\quad\,d\lambda_{1}+2\mu_{1}>0.
$$

For $\varphi\in{H}^{1}(\Omega;\mathbb{R}^{d})$, it is well known that there exists a unique solution $u\in{H}^{1}(\Omega;\mathbb{R}^{d})$ of the Dirichlet problem \eqref{system}, which is also the minimizer of the energy functional
$$J[u]=\frac{1}{2}\int_{\Omega}
\bigg(\left(\chi_{\widetilde{\Omega}}\mathbb{C}^{0}+\chi_{D_{1}\cup{D}_{2}}\mathbb{C}^{1}\right)e(u),e(u)\bigg)dx$$
on
$${H}_{\varphi}^{1}(\Omega;\mathbb{R}^{d}):=\left\{~u\in{H}^{1}(\Omega;\mathbb{R}^{d})~\big|~u-\varphi\in{H}_{0}^{1}(\Omega;\mathbb{R}^{d})~\right\}.$$

Babu\v{s}ka, Andersson, Smith, and Levin \cite{ba} computationally analyzed the damage and fracture in fiber composite materials where the
Lam\'{e} system
is used. They observed numerically that
 the size of the strain tensor $e(u)$ remains
 bounded when the distance $\epsilon$ tends to zero.
Stimulated by this, there have
 been many works on the analogous question for the scalar equation
\begin{equation}\label{equk}
\begin{cases}
\nabla\cdot\Big(a_{k}(x)\nabla{u}_{k}\Big)=0&\mbox{in}~\Omega,\\
u_{k}=\varphi&\mbox{on}~\partial\Omega,
\end{cases}
\end{equation}
where $\varphi$ is given, and
$$a_{k}(x)=
\begin{cases}
k\in(0,\infty)&\mbox{in}~D_{1}\cup{D}_{2},\\
1&\mbox{in}~\widetilde{\Omega}.
\end{cases}
$$

For touching disks
 $D_{1}$ and $D_{2}$ in dimension $d=2$, Bonnetier and Vogelius \cite{bv} proved that $|\nabla{u}_{k}|$ remains bounded. The bound depends on the value of $k$. Li and Vogelius  \cite{lv} extended the result to general divergence form second order elliptic equations with piecewise smooth coefficients in all dimensions, and they proved that $|\nabla{u}|$ remains bounded as $\epsilon\rightarrow0$.
They also established  stronger,
$\epsilon$-independent,  $C^{1,\alpha}$ estimates
for solutions in the closure of each of the regions
$D_1$, $D_2$ and $\widetilde \Omega$.
 This extension covers domains $D_{1}$ and $D_{2}$ of
arbitrary smooth shapes.
 Li and Nirenberg extended in \cite{ln}
 the results in
\cite{lv} to general divergence form  second order elliptic systems including systems
of elasticity.  This in particular answered
in the affirmative the question naturally led to by
the above mentioned numerical indication
  in \cite{ba}
 for the boundedness of the strain tensor as
$\epsilon$ tends to $0$.
For higher derivative estimates, we draw attention of readers
to the open problem on page 894 of
\cite{ln}.

The estimates in \cite{ln} and \cite{lv} depend on
the ellipticity of the coefficients.
If ellipticity constants are allowed to deteriorate, the situation is
very different.  It  was  shown in various papers,
see for example Budiansky and Carrier
 \cite{bc} and  Markenscoff
\cite{m}, that when $k=
\infty$ in \eqref{equk} the $L^\infty$-norm of $|\nabla u_\infty|$
generally becomes unbounded
as $\epsilon$ tends to $0$.  The rate at which the
 $L^\infty$-norm  of the gradient of a special solution blows up
was shown in  \cite{bc}  to be $\epsilon^{-1/2}$ in dimension $d=2$.
Ammari, Kang and Lim \cite{akl}
  and
Ammari, Kang, Lee, Lee and Lim \cite{aklll}
proved that when $D_1$ and $D_2$ are disks in
$\mathbb{R}^{2}$, and when $k=\infty$ in  \eqref{equk},
the blow up rate of $|\nabla u_\infty|$ is
$\epsilon^{-1/2}$.  This result was extended by
Yun  \cite{y1,y2} and
Bao, Li and Yin \cite{bly1} to
strictly convex $D_1$ and $D_2$  in
$\mathbb{R}^{2}$.  In dimension $d=3$ and $d\ge 4$,
the  blow up rate of $|\nabla u_\infty|$ turns out to be
 $(\epsilon|\ln\epsilon|)^{-1}$ and $\epsilon^{-1}$
respectively; see \cite{bly1}.
The results were
 extended to multi-inclusions in
\cite{bly2}.
Further, more detailed,
 characterizations of the singular behavior of $\nabla{u}_{\infty}$
have been obtained by
Ammari, Ciraolo, Kang, Lee and Yun \cite{ackly},
Ammari, Kang, Lee, Lim and Zribi \cite{AKLLZ},
Bonnetier and Triki \cite{bt0, bt},
 Kang, Lim and
Yun \cite{kly, kly2}.
For related works, see
 \cite{adkl, agkl,  bt, byl,  FKNN, gn, keller1, keller2,
 Li-Li,
llby, ly,
ly2, MNT, x, BaoXiong} and the references therein.

In this paper we  obtain gradient estimates
for the Lam\'{e} system with infinity coefficients in
dimension $d=2$.
In  a subsequent paper we treat higher dimensional cases
$d\ge 3$.

The linear space of rigid displacements in $\mathbb{R}^{2}$ is
$$\Psi:=\bigg\{\psi\in{C}^{1}(\mathbb{R}^{2};\mathbb{R}^{2})~\big|~\nabla\psi+(\
\nabla\psi)^{T}=0~\bigg\},$$
or equivalently \cite{osy},
$$\Psi=\mathrm{span}
\left\{~\psi^{1}=
\begin{pmatrix}
 1 \\
 0 \\
\end{pmatrix},~\psi^{2}=\begin{pmatrix}
 0 \\
 1 \\
\end{pmatrix},~\psi^{3}=\begin{pmatrix}
x_{2} \\
-x_{1} \\
\end{pmatrix}~\right\}.
$$

If $\xi\in{H}^{1}(D; \mathbb{R}^{2})$, $e(\xi)=0$ in $D$,
and $D\subset \mathbb{R}^{2}$ is a
connected  open set,
 then $\xi$ is a linear combination of $\{\psi^{\alpha}\}$ in $D$. If an element $\xi$ in $\Psi$ vanishes at two distinct points of $\mathbb{R}^{2}$, then $\xi\equiv0$.

For fixed $\lambda$ and $\mu$ satisfying
$\mu>0$ and $\lambda+\mu>0$, denote $u_{\lambda_1, \mu_1}$ the solution
of (\ref{system}).
Then, as proved in the Appendix,
\begin{equation}
u_{\lambda_1, \mu_1}\to u \ \mbox{in}\
H^1(\Omega; \mathbb{R}^{2})\ \
\mbox{as}\ \min\{\mu_1, \lambda_1+\mu_1\}\to \infty.
\label{limitcase}
\end{equation}
where $u$ is a $H^1(\Omega;\mathbb{R}^{2})$ solution of
\begin{equation}
\label{mainequation}
\begin{cases}
\mathcal{L}_{\lambda,\mu}u:=\nabla\cdot\left(\mathbb{C}^{0}e(u)\right)=0,&\mbox{in}~\widetilde{\Omega},\\
u\big|_{+}=u\big|_{-},&\mbox{on}~\partial{D}_{1}\cup\partial{D}_{2},\\
e(u)=0,&\mbox{in}~D_{1}\cup{D}_{2},\\
\int_{\partial{D}_{i}}\frac{\partial{u}}{\partial\nu_{0}}\bigg|_{+}\cdot\psi^{\alpha}=0,&
\alpha=1,2,3, ~i=1,2,\\
u=\varphi,&\mbox{on}~\partial{\Omega},
\end{cases}
\end{equation}
where
$$\frac{\partial{u}}{\partial\nu_{0}}\bigg|_{+}:=
\left(\mathbb{C}^{0}e(u)\right)\vec{n}
=\lambda\left(\nabla\cdot{u}\right)\vec{n}
+\mu\left(\nabla{u}+(\nabla{u})^{T}\right)\vec{n},
$$
and $\vec{n}$ is the unit outer normal of $D_i$,
$i=1,2$.

Here and throughout this paper the subscript $\pm$ indicates the limit from outside and inside the domain, respectively.
The existence, uniqueness and     regularity
  of weak solutions
to \eqref{mainequation} are
proved  in the Appendix.
In particular,
 the  $H^{1}$ weak solution to \eqref{mainequation} is
in
 ${C}^{1}(\overline{\widetilde{\Omega}})
\cap{C}^{1}(\overline{D_{1}\cup{D}_{2}})$.

The convergence (\ref{limitcase}) in the case
$\mu_1\to \infty$ while $\lambda_1$ remains bounded was
established in \cite{akkl}.
Our proof of (\ref{limitcase}) in the Appendix is different and is an
extension to systems
 of that in  \cite{bly1}.

The solution of \eqref{mainequation} is also the unique
function which has the least energy in appropriate functional spaces,
characterized by
$$
I_{\infty}[u]=\min_{v\in\mathcal{A}}I_{\infty}[v],
$$
where
$$
I_{\infty}[v]:=
\frac{1}{2}\int_{\widetilde \Omega}\left(\mathbb{C}^{(0)}
e(v),e(v)\right)dx,
$$
and
$$
\mathcal{A}:=\left\{u\in{H}_{\varphi}^{1}(\Omega;\mathbb{R}^{2})
~\big|~e(u)=0~~\mbox{in}~D_1\cup D_2\right\}.
$$

 A calculation gives
\begin{align}\label{L_u}
\left(\mathcal{L}_{\lambda,\mu}u\right)^{i}=\mu\Delta{u}^{i}+
(\lambda+\mu)
\left[\partial_{x_ix_1}u^1+ \partial_{x_ix_2}u^2\right], \qquad
i=1,2.
\end{align}

Since $D_{1}$ and $D_{2}$ are two strictly convex subdomains of $\Omega$, there exist two points $P_{1}\in\partial{D}_{1}$ and $P_{2}\in\partial{D}_{2}$ such that
\begin{equation}\label{P1P2}
\mathrm{dist}(P_{1},P_{2})=\mathrm{dist}(\partial{D}_{1},\partial{D}_{2})=\epsilon.
\end{equation}
We use $\overline{P_{1}P_{2}}$ to denote the line segment connecting $P_{1}$ and $P_{2}$. For readers' convenience, we first assume that $\partial{D}_{1}$ near $P_{1}$ and $\partial{D}_{2}$ near $P_{2}$ are quadratic. For more general $D_{1}$ and $D_{2}$, we consider in Section 5.

 Assume that for some $\delta_{0}>0$,
\begin{equation}\label{coeff4_strongelyconvex}
\delta_{0}\leq\mu,\lambda+\mu\leq\frac{1}{\delta_{0}}.
\end{equation}
The main result in  this paper is as follows.
\begin{theorem}\label{mainthm1}
Assume that
 $\Omega$, $D_{1},D_{2}$, $\epsilon$
are defined  in \eqref{omega} with $d=2$,
$\lambda$ and $\mu$ satisfy
(\ref{coeff4_strongelyconvex}), and
 $\varphi\in{C}^{1,\gamma}(\partial\Omega;\mathbb{R}^{2})$
for some  $0<\gamma<1$. Let $u\in{H}^{1}(\Omega;\mathbb{R}^{2})\cap{C}^{1}(\overline{\widetilde{\Omega}};\mathbb{R}^{2})$ be
a solution to \eqref{mainequation}. Then for $0<\epsilon<1$,
 we have
\begin{equation}\label{mainestimates}
|\nabla{u}(x)|\leq
\left\{
\begin{array}{ll}
\displaystyle{
\frac{C}{\sqrt{\epsilon}+\mathrm{dist}(x,\overline{P_{1}P_{2}})}\|\varphi\|_{C^{1,\gamma}
(\partial{\Omega};\mathbb{R}^{2})},
}
&\quad\,x\in
\widetilde \Omega,\\
&\\
C\|\varphi\|_{C^{1,\gamma}
(\partial{\Omega};\mathbb{R}^{2})},&\quad x\in D_1\cup D_2.
\end{array}
\right.
\end{equation}
where $C$ is a universal constant.
In particular,
\begin{equation}\label{normbound}
\|\nabla u\|_{ L^\infty(\Omega) }\le C\epsilon^{ -1/2}
\|\varphi\|_{C^{1,\gamma}(\partial{\Omega};\mathbb{R}^{2})}.
\end{equation}
\end{theorem}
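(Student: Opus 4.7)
The plan is to adapt the decomposition strategy of Bao--Li--Yin \cite{bly1}, which treated the scalar case, to the Lamé system. Since $e(u)=0$ in each $D_i$ and $D_i$ is connected, one has $u|_{D_i}=\sum_{\alpha=1}^{3} C_i^{\alpha}\psi^{\alpha}$ for unknown real constants $C_i^{\alpha}$, $i=1,2$. Introduce six auxiliary vector fields $v_i^{\alpha}\in C^{1}(\overline{\widetilde{\Omega}};\mathbb{R}^{2})$ satisfying $\mathcal{L}_{\lambda,\mu}v_i^{\alpha}=0$ in $\widetilde{\Omega}$ with the Dirichlet data $v_i^{\alpha}=\psi^{\alpha}$ on $\partial D_i$, $v_i^{\alpha}=0$ on $\partial D_j$ ($j\neq i$) and on $\partial\Omega$; and let $v_{0}$ solve the same equation with data $\varphi$ on $\partial\Omega$ and $0$ on $\partial D_1\cup\partial D_2$. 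Existence, uniqueness and $C^{1,\gamma}$--regularity up to the boundaries of these auxiliary fields follow from classical Lamé theory since $\partial D_i, \partial\Omega$ are smooth. Then $u=v_{0}+\sum_{i,\alpha}C_i^{\alpha}v_i^{\alpha}$ in $\widetilde{\Omega}$.

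The crucial analytic step is the pointwise gradient bound
$$
|\nabla v_i^{\alpha}(x)|\le\frac{C}{\sqrt{\epsilon}+\mathrm{dist}(x,\overline{P_{1}P_{2}})},\qquad x\in\widetilde{\Omega}.
$$
Set up coordinates with origin at the midpoint of $\overline{P_{1}P_{2}}$ and the $x_2$-axis along it; by strict convexity $\partial D_1\cup\partial D_2$ are locally graphs $x_2=\pm\bigl(\tfrac{\epsilon}{2}+\tfrac{\kappa_0}{2}x_1^{2}+O(x_1^{3})\bigr)$. On the narrow neck I will construct an explicit comparison field $\bar{v}_i^{\alpha}$ that equals $\psi^{\alpha}$ on $\partial D_i$ and $0$ on $\partial D_j$ and interpolates linearly in $x_2$ between them, then cut off smoothly to $0$ outside a fixed neighborhood of the neck. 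A direct computation gives $|\nabla\bar{v}_i^{\alpha}|\le C/(\epsilon+x_1^{2})$ and shows that $\mathcal{L}_{\lambda,\mu}\bar{v}_i^{\alpha}$ is bounded in $L^{\infty}$ by a universal constant away from the neck and by $C/(\epsilon+x_1^{2})$ near it, which is still integrable. A weighted energy estimate bounds $\int|e(v_i^{\alpha}-\bar{v}_i^{\alpha})|^{2}$, and I will upgrade this to a pointwise bound by a dyadic rescaling argument on thin slabs $\{|x_1|\sim r\}$ of width $\sim\epsilon+r^{2}$: rescale each slab to unit size, apply the interior/boundary $C^{1,\gamma}$ Schauder estimate for the Lamé system on the rescaled domain, then undo the scaling. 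Away from the neck, standard $C^{1,\gamma}$ estimates for $\mathcal{L}_{\lambda,\mu}$ handle the remaining region.

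With these estimates, I substitute $u=v_{0}+\sum C_i^{\alpha}v_i^{\alpha}$ into the integral compatibility conditions $\int_{\partial D_i}\tfrac{\partial u}{\partial\nu_0}|_{+}\cdot\psi^{\beta}=0$, obtaining a $6\times 6$ linear system $\sum_{j,\alpha}a_{ij}^{\beta\alpha}C_j^{\alpha}=b_i^{\beta}$ with $a_{ij}^{\beta\alpha}=-\int_{\partial D_i}\frac{\partial v_j^{\alpha}}{\partial\nu_0}\big|_{+}\cdot\psi^{\beta}$ and $b_i^{\beta}=\int_{\partial D_i}\frac{\partial v_0}{\partial\nu_0}\big|_{+}\cdot\psi^{\beta}$. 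Symmetry of the Lamé bilinear form gives $a_{ij}^{\beta\alpha}=a_{ji}^{\alpha\beta}$; boundedness of $\nabla v_0$ near the inclusions (proved as in Step 2 with $v_0$ playing the role of a regular solution) bounds $b_i^{\beta}$ by $C\|\varphi\|_{C^{1,\gamma}}$. The diagonal entries $a_{ii}^{\alpha\alpha}$ for the translational modes $\psi^{1},\psi^{2}$ diverge precisely like $\epsilon^{-1/2}$ (this is the same mechanism as in the scalar case), while the off-diagonal entries and the rotational diagonal entries are either bounded or tend to zero. Together with the negative-definite structure of the matrix this yields $|C_i^{\alpha}|\le C\|\varphi\|_{C^{1,\gamma}}$. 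Combining with the gradient bound on each $v_i^{\alpha}$ and on $v_{0}$ then gives \eqref{mainestimates}; \eqref{normbound} follows by evaluating at $x=P_1$ or $P_2$.

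The main obstacle is the sharp pointwise gradient estimate in the second step: the linear interpolation construction cannot be made exactly harmonic for the Lamé operator, and the residual $\mathcal{L}_{\lambda,\mu}\bar{v}_i^{\alpha}$ carries the full non-diagonal coupling $(\lambda+\mu)\nabla\,\mathrm{div}$ in \eqref{L_u}, which interacts badly with the rotational mode $\psi^{3}=(x_2,-x_1)^{T}$. Getting the correct $1/(\sqrt{\epsilon}+\mathrm{dist}(x,\overline{P_1P_2}))$ profile rather than the weaker $1/(\epsilon+\mathrm{dist}^{2})$ profile at distance $r\gg\sqrt{\epsilon}$ from the neck requires the iterative rescaling argument to be carried out with care; a secondary obstacle is ensuring invertibility of the coefficient matrix uniformly in $\epsilon$, for which the precise leading asymptotics of $a_{ii}^{\alpha\alpha}$ as $\epsilon\to 0$ will be needed.
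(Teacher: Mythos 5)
There is a genuine gap, and it sits exactly at what you call the ``crucial analytic step.'' The pointwise bound $|\nabla v_i^{\alpha}(x)|\le C/(\sqrt{\epsilon}+\mathrm{dist}(x,\overline{P_1P_2}))$ is \emph{false} for the translational modes $\alpha=1,2$: on the vertical segment through the neck, $v_1^{\alpha}$ must drop from $\psi^{\alpha}$ on $\partial D_1$ to $0$ on $\partial D_2$ across a gap of width $\epsilon$, so $\sup|\nabla v_1^{\alpha}|\gtrsim 1/\epsilon$ there, and more generally the correct profile is $1/(\epsilon+\mathrm{dist}^2)$, as in \eqref{mainev1}. Your own proposal is internally inconsistent on this point: if the claimed $1/(\sqrt{\epsilon}+\mathrm{dist})$ bound held, then $a_{ii}^{\alpha\alpha}=\int_{\widetilde\Omega}(\mathbb{C}^0 e(v_i^{\alpha}),e(v_i^{\alpha}))\,dx$ would be bounded uniformly in $\epsilon$, contradicting the $\epsilon^{-1/2}$ divergence you (correctly, cf.\ \eqref{a11_1122}) invoke later to control the linear system. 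Consequently, with only $|C_i^{\alpha}|\le C$ in hand, substituting back into $u=v_0+\sum C_i^{\alpha}v_i^{\alpha}$ yields at best $|\nabla u|\le C/\epsilon$ near the neck, not the asserted $C/\sqrt{\epsilon}$.

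The missing idea is a cancellation mechanism, which is how the paper proceeds: write $\nabla u$ as in \eqref{nablau_dec}, grouping $\sum_{\alpha=1}^{2}(C_1^{\alpha}-C_2^{\alpha})\nabla v_1^{\alpha}+\sum_{\alpha=1}^{2}C_2^{\alpha}(\nabla v_1^{\alpha}+\nabla v_2^{\alpha})+\sum_i C_i^3\nabla v_i^3+\nabla v_3$. One then proves (i) $\|\nabla(v_1^{\alpha}+v_2^{\alpha})\|_{L^\infty}\le C$ (the singularities of the two fields cancel, \eqref{mainev1+23}); (ii) the sharp individual bounds $|\nabla v_i^{\alpha}|\le C/(\epsilon+\mathrm{dist}^2)$ and $|\nabla v_i^{3}|\le C(\epsilon+\mathrm{dist})/(\epsilon+\mathrm{dist}^2)$; and, crucially, (iii) the difference estimate $|C_1^{\alpha}-C_2^{\alpha}|\le C\sqrt{\epsilon}$ for $\alpha=1,2$ (\eqref{maineC1-C2}), obtained by Cramer's rule from the compatibility system using quantitative asymptotics of the matrix entries ($a_{11}^{\alpha\alpha}\sim\epsilon^{-1/2}$, $a_{11}^{33}\sim 1$, $|a_{11}^{12}|\le C\epsilon^{-1/4}$, $|a_{11}^{\alpha 3}|\le C$, $\det a_{11}\sim\epsilon^{-1}$) together with boundedness of the right-hand side. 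It is the product $\sqrt{\epsilon}\cdot(\epsilon+\mathrm{dist}^2)^{-1}\le C(\sqrt{\epsilon}+\mathrm{dist})^{-1}$, not any bound on a single $v_i^{\alpha}$, that produces \eqref{mainestimates}. Your outline contains neither the sum estimate (i) nor the difference estimate (iii), and without them the proposed route cannot reach the $\epsilon^{-1/2}$ rate; also note that the rotational fields $v_i^{3}$ must be treated separately (they are only mildly singular, so bounded $C_i^3$ suffices), rather than lumped with $\alpha=1,2$ as in your sketch.
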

Note that throughout the paper,
unless otherwise stated, $C$ denotes some constant,
whose value may vary from line to line,
depending only on
 $\kappa_{0},\kappa_{1}$, $\gamma$, $\delta_{0}$,
 $\|\partial{D}_{1}\|_{C^{2,\gamma}}$,
 $\|\partial{D}_{2}\|_{C^{2,\gamma}}$,
  $\|\partial{\Omega}\|_{C^{2}}$ and the
Lebesgue measure of $\Omega$, and is in particular
 independent of $\epsilon$.  Also, we call a
constant having such dependence a universal constant.

Since the blow up rate of $|\nabla u_\infty|$ for solutions
of (\ref{equk}) when $k=\infty$ is known to reach the magnitude
 $\epsilon^{-1/2}$,
estimate (\ref{normbound}) is expected to be
optimal.  This is also supported by
 the numerical indication in \cite{kk}.

The paper is organized as follows.
In Section 2, we first introduce the setup of the
proof of Theorem \ref{mainthm1}. Then
we  state a proposition,
Proposition \ref{prop_gradient},
 containing  key estimates, and
deduce  Theorem \ref{mainthm1} from the proposition.
In Sections 3 and 4, we prove Proposition \ref{prop_gradient}.
In Section 5, we prove Theorem \ref{mainthm1*} which
extends Theorem \ref{mainthm1} in two aspects.
  One is that
the strict convexity assumption on $\partial D_1$ and $\partial D_2$
can be replaced by a weaker relative strict convexity assumption.
The other  is an upper bound of the gradient when
 the flatness order near the closest points
between  $\partial D_1$ and $\partial D_2$
is $m\ge 2$ instead of $m=2$ for the strictly convex
$\partial D_1$ and $\partial D_2$.
In the Appendix, we give a variational characterization
of solutions of the Lam\'e system with infinity coefficients
and prove the previously mentioned
 convergence result (\ref{limitcase}).

\section{Outline of the proof of Theorem \ref{mainthm1} and recall of Korn's inequalities }

The proof of Theorem \ref{mainthm1} makes use of the
following decomposition.
By the third line of \eqref{mainequation},
 $u$ is a linear combination of $\{\psi^{\alpha}\}$ in $D_{1}$ and $D_{2}$,
 respectively.
 Since
 $\mathcal{L}_{\lambda,\mu}\xi=0$ in $\widetilde{\Omega}$
 and $\xi=0$ on $\partial\widetilde{\Omega}$
 imply that $\xi=0$ in $\widetilde{\Omega}$, we decompose the solution of \eqref{mainequation}, in the spire of \cite{bly1},  as follows:
\begin{equation}\label{decom_u}
u=
\begin{cases}
\sum\limits_{\alpha=1}^{3}C_{1}^{\alpha}\psi^{\alpha},&\mbox{in}~\overline{D}_{1},\\
\sum\limits_{\alpha=1}^{3}C_{2}^{\alpha}\psi^{\alpha},&\mbox{in}~\overline{D}_{2},\\
\sum\limits_{\alpha=1}^{3}C_{1}^{\alpha}v_{1}^{\alpha}+\sum\limits_{\alpha=1}^{3}C_{2}^{\alpha}v_{2}^{\alpha}+v_{3},&\mbox{in}~\widetilde{\Omega},
\end{cases}
\end{equation}
where
$v_{i}^{\alpha}\in{C}^{1}(\overline{\widetilde{\Omega}};\mathbb{R}^{2})
\cap {C}^{2}(\widetilde{\Omega}; \mathbb{R}^{2})$, $\alpha=1,2,3$, $i=1,2$, satisfy
\begin{equation}\label{v1alpha}
\begin{cases}
\mathcal{L}_{\lambda,\mu}v_{i}^{\alpha}=0,&\mbox{in}~\widetilde{\Omega},\\
v_{i}^{\alpha}=\psi^{\alpha},&\mbox{on}~\partial{D}_{i},\\
v_{i}^{\alpha}=0,&\mbox{on}~\partial{D}_{j}\cup\partial{\Omega},~j\neq\,i;
\end{cases}
\end{equation}
 $v_{3}\in{C}^{1}(\overline{\widetilde{\Omega}};\mathbb{R}^{2})
\cap {C}^{2}(\widetilde{\Omega}; \mathbb{R}^{2})$ satisfies
\begin{equation}\label{v3}
\begin{cases}
\mathcal{L}_{\lambda,\mu}v_{3}=0,&\mbox{in}~\widetilde{\Omega},\\
v_{3}=0,&\mbox{on}~\partial{D}_{1}\cup\partial{D}_{2},\\
v_{3}=\varphi,&\mbox{on}~\partial{\Omega};
\end{cases}
\end{equation}
and the constants $\{C_{i}^{\alpha}\}$
are uniquely determined by $u$.

By the decomposition \eqref{decom_u}, we write
\begin{equation}\label{nablau_dec}
\nabla\,u=\sum_{\alpha=1}^{2}\left(C_{1}^{\alpha}-C_{2}^{\alpha}\right)\nabla{v}_{1}^{\alpha}
+\sum_{\alpha=1}^{2}C_{2}^{\alpha}(\nabla{v}_{1}^{\alpha}+\nabla{v}_{2}^{\alpha})
+\sum_{i=1}^{2}C_{i}^{3}\nabla{v}_{i}^{3}+\nabla{v}_{3},\quad\mbox{in}~\widetilde{\Omega}.
\end{equation}

Theorem \ref{mainthm1} can be deduced from the following proposition.

\begin{prop}\label{prop_gradient}
Under the hypotheses of Theorem \ref{mainthm1}
and a normalization
 $\|\varphi\|_{C^{1,\gamma}(\partial\Omega)}=1$,
we have, for $0<\epsilon<1$,
\begin{equation}\label{mainev3}
\|\nabla{v}_{3}\|_{L^{\infty}(\widetilde{\Omega})}\leq\,C;
\end{equation}
\begin{equation}\label{mainev1+23}
\|\nabla{v}_{1}^{\alpha}+\nabla{v}_{2}^{\alpha}\|_{L^{\infty}(\widetilde{\Omega})}\leq\,C,\quad\alpha=1,2, 3;
\end{equation}
\begin{equation}\label{mainev1}
|\nabla{v}_{i}^{\alpha}(x)|\leq\frac{C}{\epsilon+\mathrm{dist}^{2}(x,\overline{P_{1}P_{2}})},\quad\,i,\alpha=1,2,\quad\,x\in\widetilde{\Omega};
\end{equation}
\begin{equation}\label{mainevi3}
|\nabla{v}_{i}^{3}(x)|\leq\,C\frac{\epsilon+\mathrm{dist}(x,\overline{P_{1}P_{2}})}{\epsilon+\mathrm{dist}^{2}(x,\overline{P_{1}P_{2}})},\quad\,i=1,2,
\quad\,x\in\widetilde{\Omega};
\end{equation}
and
\begin{equation}\label{maineC}
|C_{i}^{\alpha}|\leq\,C,\quad
i=1,2,~\alpha=1,2,3;
\end{equation}
\begin{equation}\label{maineC1-C2}
\quad|C_{1}^{\alpha}-C_{2}^{\alpha}|\leq\,C\sqrt{\epsilon},\quad
\alpha=1,2.
\end{equation}
\end{prop}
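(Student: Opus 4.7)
The plan is to treat the six estimates in three groups. Bounds \eqref{mainev3} and \eqref{mainev1+23} are the easy ones, since the relevant boundary data are \emph{compatible} across the neck; \eqref{mainev1} and \eqref{mainevi3} carry all the singular behavior and are the core of the argument; finally \eqref{maineC} and \eqref{maineC1-C2} follow from the interface conditions in the fourth line of \eqref{mainequation} together with the canonical singular profile produced in the previous step. Throughout I use coordinates in which $P_i=(0,(-1)^{i+1}\epsilon/2)$ and $\partial D_i$ near $P_i$ is the graph $x_2=h_i(x_1)$ with $\delta(x_1):=h_1(x_1)-h_2(x_1)\in[\epsilon,\epsilon+Cx_1^2]$.

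For \eqref{mainev3}, since $v_3$ vanishes on $\partial D_1\cup\partial D_2$, a vertical linear barrier in the neck gives $|v_3(x)|\le C\,\mathrm{dist}(x,\partial D_1\cup\partial D_2)$; rescaling the cell $|x_1-z_1|<\delta(z_1)/C_0$, $h_2(x_1)<x_2<h_1(x_1)$ to unit size, where the Lam\'e operator remains uniformly elliptic, converts this into $|\nabla v_3|\le C$, while outside the neck standard boundary $C^{1,\gamma}$ theory for the Lam\'e system delivers the same bound. For \eqref{mainev1+23}, the sum $v_1^\alpha+v_2^\alpha$ equals the \emph{same} rigid motion $\psi^\alpha$ on both $\partial D_1$ and $\partial D_2$, so subtracting a smooth extension of $\psi^\alpha$ that vanishes near $\partial\Omega$ reduces to a Lam\'e Dirichlet problem with $\epsilon$-independent $C^{1,\gamma}$ data, to which standard theory applies.

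For the core estimates \eqref{mainev1} and \eqref{mainevi3}, introduce the canonical interpolant
\[
\bar v_1^\alpha(x):=\psi^\alpha(x)\,\frac{x_2-h_2(x_1)}{\delta(x_1)},\qquad \alpha=1,2,3,
\]
in the neck, extended by a cutoff to vanish away from $\overline{P_1P_2}$; define $\bar v_2^\alpha$ symmetrically. Direct differentiation gives $|\nabla\bar v_1^\alpha|\le C/\delta(x_1)$ for the translations $\alpha=1,2$ (the constant $\psi^\alpha$ produces the singular factor only through $\partial_{x_2}$), while for the rotation $\psi^3=(x_2,-x_1)^T$ the factor $x_2=O(\delta)$ in the first component absorbs one power of $1/\delta$, leaving only the $|x_1|/\delta$ contribution from $\partial_{x_2}$ of the second component and yielding $|\nabla\bar v_1^3|\le C(\epsilon+|x_1|)/(\epsilon+x_1^2)$. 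Setting $w_i^\alpha:=v_i^\alpha-\bar v_i^\alpha$, the difference vanishes on $\partial D_1\cup\partial D_2$ in the neck and satisfies $\mathcal{L}_{\lambda,\mu}w_i^\alpha=-\mathcal{L}_{\lambda,\mu}\bar v_i^\alpha$ with right-hand side of strictly lower order in $\delta$. A Caccioppoli-plus-Korn iteration on the shrinking strips $\Omega_r:=\{|x_1|<r,\ h_2(x_1)<x_2<h_1(x_1)\}$ controls $\int_{\Omega_r}|\nabla w_i^\alpha|^2$; rescaling the cell above to unit size then converts this $L^2$ bound into $|\nabla w_i^\alpha(z)|\le C/\delta(z_1)$ via interior $C^{1,\gamma}$ Schauder, and combining with the explicit bounds on $\nabla\bar v_i^\alpha$ closes both pointwise estimates.

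For \eqref{maineC} and \eqref{maineC1-C2}, substituting the decomposition \eqref{decom_u} into the six orthogonality conditions in the fourth line of \eqref{mainequation} yields a $6\times 6$ linear system $A\mathbf C=b$ with entries $A_{(i\alpha)(j\beta)}=\int_{\widetilde\Omega}(\mathbb{C}^{0} e(v_i^\alpha),e(v_j^\beta))$ and $b_{i\alpha}=-\int_{\widetilde\Omega}(\mathbb{C}^{0} e(v_3),e(v_i^\alpha))$. Passing to the variables $C_\pm^\alpha:=(C_1^\alpha\pm C_2^\alpha)/2$, the canonical computation shows that the diagonal entry associated to $C_-^\alpha$ for $\alpha=1,2$ is $\int_{\widetilde\Omega}(\mathbb{C}^{0} e(v_1^\alpha-v_2^\alpha),e(v_1^\alpha-v_2^\alpha))\sim\epsilon^{-1/2}$, while every other entry of $A$ and all of $b$ are at worst $O(|\ln\epsilon|)$; diagonal dominance then yields $|C_1^\alpha-C_2^\alpha|\le C\sqrt\epsilon$ for $\alpha=1,2$ and $|C_i^\alpha|\le C$ throughout. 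The main obstacle throughout is the $L^2$-to-$L^\infty$ control of $w_i^\alpha$ in the degenerating strip $\Omega_r$: naive Korn constants deteriorate as the strip thickness vanishes, so one must subtract an $x_1$-dependent rigid-motion correction before extracting useful energy bounds—this is the analytic analogue of the scalar maximum-principle step used in \cite{bly1} and is where the Lam\'e vector structure enters irreducibly.
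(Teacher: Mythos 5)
Your treatment of \eqref{mainev1} is essentially the paper's: the same interpolant ($\bar v_i^\alpha=\psi^\alpha\bar u$ coincides with the paper's $\bar u_i^\alpha$), the same Caccioppoli--Korn iteration on shrinking strips, and the same rescaling of a $\delta$-cell to unit size. But the step ``converts this $L^2$ bound into $|\nabla w_i^\alpha(z)|\le C/\delta(z_1)$ \dots closes \emph{both} pointwise estimates'' does not close \eqref{mainevi3}. For $\alpha=3$ the crude bound $|\nabla w_i^3|\le C/\delta\sim C/(\epsilon+|x_1|^2)$ dominates $|\nabla\bar v_i^3|\le C(\epsilon+|x_1|)/(\epsilon+|x_1|^2)$ near $x_1=0$ and gives only $|\nabla v_i^3|\le C/\epsilon$ on the segment, whereas \eqref{mainevi3} requires $O(1)$ there. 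One must exploit that $\mathcal{L}_{\lambda,\mu}\bar u_i^3$ is only of size $C/(\epsilon+|x_1|^2)$ (no $|x_1|/(\epsilon+|x_1|^2)^2$ term) and that $\nabla\bar u_i^3$ has $\epsilon$-uniform $L^2$ norm, which upgrades the local energies to $C\epsilon^2$ (resp.\ $C|z_1|^4$) and yields $\|\nabla w_i^3\|_{L^\infty}\le C$, as in Lemmas \ref{lemvi3}--\ref{lemma3.6} of the paper. Separately, for \eqref{mainev3} your ``vertical linear barrier'' presupposes a maximum principle, which the Lam\'e \emph{system} does not have; the paper instead bounds $\|\nabla v_3\|_{L^2(\widetilde\Omega)}$ by energy comparison and then invokes the narrow-region gradient estimate of \cite{llby} (equivalently, the same iteration you run for $w_i^\alpha$ can be run for $v_3$, which vanishes on both $\Gamma^\pm$).

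The constants are the other genuine gap. Deriving \eqref{maineC} from the full $6\times6$ Gram system by ``diagonal dominance'' does not work as stated: in the rows associated with $C_+^\alpha=(C_1^\alpha+C_2^\alpha)/2$ and with $C_1^3-C_2^3$ the diagonal entries are only $O(1)$ while the couplings are also $O(1)$, so dominance fails there, and boundedness of those variables would require uniform-in-$\epsilon$ lower bounds on the corresponding block of the quadratic form --- the analogue of the paper's nontrivial bound $a_{11}^{33}\ge 1/C$ in Lemma \ref{lem_a_11}, which needs a compactness/Korn argument, not just upper bounds on entries. Your claim that all remaining entries are $O(|\ln\epsilon|)$ is also unsubstantiated (the paper only obtains $|a_{11}^{12}|\le C\epsilon^{-1/4}$, because of terms like $\int_{\Omega_{R/2}}(\mathbb{C}^{0}\nabla\bar u_1^1,\nabla w_1^2)$, though any $o(\epsilon^{-1/2})$ bound would do). The paper avoids this circularity by proving \eqref{maineC} first and independently --- energy bound for $u$ itself, trace embedding on $\partial D_1\setminus B_R$, and linear independence of the rigid motions there (Lemma \ref{lem_C1C2bound}) --- and only then solves the $3\times3$ system $a_{11}(C_1-C_2)=b_1-(a_{11}+a_{21})C_2$, whose right-hand side is $O(1)$ by \eqref{mainev1+23} and $\int_{\widetilde\Omega}|\nabla v_1^\beta|\le C$, via Cramer's rule with $a_{11}^{11},a_{11}^{22}\sim\epsilon^{-1/2}$, $a_{11}^{33}\sim 1$, $\det a_{11}\sim\epsilon^{-1}$. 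You need either such an independent proof of \eqref{maineC} or the missing lower bounds for the ``plus'' block before \eqref{maineC1-C2} can be extracted.
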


\begin{proof} [Proof of
 Theorem \ref{mainthm1}
by using Proposition
\ref{prop_gradient}]

Clearly, we only
need to prove the theorem
under the normalization
 $\|\varphi\|_{C^{1,\gamma}(\partial\Omega)}=1$.

Since
$$\nabla{u}=C_{i}^{3}
\begin{pmatrix}
0&1\\
-1&0
\end{pmatrix}\quad\mbox{in}~D_{i},\quad\,i=1,2,$$
the second estimate in
(\ref{mainestimates}) follows
easily from (\ref{maineC}).

By \eqref{nablau_dec} and Proposition \ref{prop_gradient}, we have,
for $x$ in $\widetilde \Omega$,
$$
\left|\nabla\,u(x)\right|\leq\sum_{\alpha=1}^{2}\left|C_{1}^{\alpha}-C_{2}^{\alpha}\right|\left|\nabla{v}_{1}^{\alpha}(x)\right|
+C\sum_{i=1}^{2}\left|\nabla{v}_{i}^{3}(x)\right|+C
\leq\frac{C}{\sqrt{\epsilon}+\mathrm{dist}(x,\overline{P_{1}P_{2}})}.
$$
Theorem \ref{mainthm1}
follows.
\end{proof}

\bigskip

To complete this section, we recall some properties of the tensor $\mathbb{C}$.
For the isotropic elastic material, let
$$\mathbb{C}:=(C_{ij\,kl})=\left(\lambda\delta_{ij}\delta_{kl}+\mu\left(\delta_{ik}\delta_{jl}+\delta_{il}\delta_{jk}\right)\right),
\quad\,\mu>0,\quad\,d\lambda+2\mu>0.$$
The  components $C_{ij\,kl}$ satisfy the following symmetric condition:
\begin{equation}\label{coeff1}
C_{ij\,kl}=C_{kl\,ij}=C_{klj\,i},\quad\,i,j,k,l=1,2,\cdots,d.
\end{equation}
We will use the following notations:
$$(\mathbb{C}A)_{ij}=\sum_{k,l=1}^{d}C_{ij\,kl}A_{kl},\quad\mbox{and}\quad
(A,B)\equiv A:B =\sum_{i,j=1}^{d}A_{ij}B_{ij},$$
for every pair of $d\times{d}$ matrices $A=(A_{ij}),B=(B_{ij})$.
Clearly
$$
(\mathbb{C}A,B)=(A,\mathbb{C}B).
$$
If $A$ is  symmetric, then,
by  the symmetry condition \eqref{coeff1},
we have that
$$
\left(\mathbb{C} A, A\right)
=
C_{ij\,kl}\,A_{kl}A_{ij}=\lambda\,A_{ii}A_{kk}+2\mu\,A_{kj}A_{kj}.$$
Thus
 $\mathbb{C}$ satisfies the following ellipticity condition:
For every $d\times{d}$ real symmetric matrix $A=(A_{ij})$,
\begin{equation}\label{coeff2}
\min\{2\mu,d\lambda+2\mu\}|A|^{2}\leq\,
\left(\mathbb{C} A, A\right)
\leq\,
\max\{2\mu,d\lambda+2\mu\}|A|^{2},
\end{equation}
where $|A|^{2}=\sum\limits_{i,j}A_{ij}^{2}$.

For readers' convenience, we recall some inequalities of Korn's type, see, e.g. theorem 2.1, theorem 2.5, theorem 2.10 and theorem 2.14 in  \cite{osy}.
\begin{lemmaAlph}\label{lemmaA}(First Korn inequality)
Let $\Omega$ be a bounded open set of $\mathbb{R}^{d}$, $d\geq2$. Then every  $u\in{H}_{0}^{1}(\Omega,\mathbb{R}^{d})$ satisfies the inequality
$$\|\nabla{u}\|_{L^{2}(\Omega)}^{2}\leq2\|e(u)\|_{L^{2}(\Omega)}^{2}.$$
\end{lemmaAlph}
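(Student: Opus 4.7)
The plan is to prove the inequality first for $u\in C_c^\infty(\Omega,\mathbb{R}^d)$ by a direct expansion plus integration by parts, and then extend to all of $H_0^1(\Omega,\mathbb{R}^d)$ by density. Density will be routine since $C_c^\infty(\Omega,\mathbb{R}^d)$ is dense in $H_0^1(\Omega,\mathbb{R}^d)$ and both sides of the inequality are continuous with respect to the $H^1$ norm, so the real content is the pointwise integral identity.

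For smooth compactly supported $u$, I would begin by expanding $e(u)$ componentwise:
\begin{equation*}
|e(u)|^2 = \tfrac14\sum_{i,j=1}^d \left(\partial_j u^i + \partial_i u^j\right)^2 = \tfrac12 |\nabla u|^2 + \tfrac12 \sum_{i,j=1}^d \partial_j u^i\,\partial_i u^j.
\end{equation*}
Integrating over $\Omega$ yields
\begin{equation*}
\|e(u)\|_{L^2(\Omega)}^2 = \tfrac12 \|\nabla u\|_{L^2(\Omega)}^2 + \tfrac12 \int_\Omega \sum_{i,j=1}^d \partial_j u^i\,\partial_i u^j\,dx.
\end{equation*}
The second step is to show the cross-term integral is nonnegative. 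For $u\in C_c^\infty(\Omega,\mathbb{R}^d)$, I integrate by parts twice (first in $x_j$, then in $x_i$), with no boundary contribution because of compact support, to obtain
\begin{equation*}
\int_\Omega \partial_j u^i\,\partial_i u^j\,dx = \int_\Omega \partial_i u^i\,\partial_j u^j\,dx,
\end{equation*}
and summing on $i,j$ gives $\int_\Omega (\nabla\cdot u)^2\,dx \ge 0$.

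Combining, $\|e(u)\|_{L^2}^2 \ge \tfrac12\|\nabla u\|_{L^2}^2$, which is exactly the claimed bound. Finally, for arbitrary $u\in H_0^1(\Omega,\mathbb{R}^d)$, I approximate by $u_k\in C_c^\infty(\Omega,\mathbb{R}^d)$ with $u_k\to u$ in $H^1$, apply the inequality to each $u_k$, and pass to the limit using continuity of $\nabla$ and $e(\cdot)$ as bounded operators from $H^1$ to $L^2$. No obstacle is expected here; the only slightly subtle step is the double integration by parts that converts the mixed-derivative term into the nonnegative $\|\nabla\cdot u\|_{L^2}^2$, and this is exactly where the hypothesis $u\in H_0^1$ (vanishing trace) is used to eliminate boundary terms after the density argument.
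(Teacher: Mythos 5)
Your proof is correct and complete: the pointwise identity $|e(u)|^{2}=\tfrac12|\nabla u|^{2}+\tfrac12\sum_{i,j}\partial_{j}u^{i}\,\partial_{i}u^{j}$, the double integration by parts turning the cross term into $\int_{\Omega}(\nabla\cdot u)^{2}\,dx\ge 0$ for $u\in C_{c}^{\infty}(\Omega;\mathbb{R}^{d})$, and the density/continuity argument to pass to general $u\in H_{0}^{1}(\Omega;\mathbb{R}^{d})$ are all sound. Note, however, that the paper itself does not prove this lemma: it is simply recalled from the literature (theorem 2.1 of the cited monograph of Oleinik, Shamaev and Yosifian), so there is no ``paper proof'' to compare against. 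What you have written is the standard textbook argument for the First Korn inequality, and it would serve as a self-contained substitute for the citation; the only stylistic point worth tightening is the remark about where $u\in H_{0}^{1}$ is used --- the boundary terms vanish already at the level of the compactly supported approximants, and the $H_{0}^{1}$ hypothesis enters only through the availability of such approximants, which your final paragraph in effect already says.
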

Next, a few versions of the Second Korn inequality
\begin{lemmaAlph}\label{lemmaB}
Suppose that $\Omega$ is a bounded open set of $\mathbb{R}^{d}$, $d\geq2$, of diameter $R$, and it is star-shaped with respect to the ball $B_{R_{1}}=\{x:~|x|<R_{1}\}$. Then for any $u\in{H}^{1}(\Omega,\mathbb{R}^{d})$ we have the inequality
$$\|\nabla{u}\|_{L^{2}(\Omega)}^{2}\leq\,C_{1}\left(\frac{R}{R_{1}}\right)^{d+1}\|e(u)\|_{L^{2}(\Omega)}^{2}
+C_{2}\left(\frac{R}{R_{1}}\right)^{d}\|\nabla{u}\|_{L^{2}(B_{R_{1}})}^{2},$$
where $C_{1},C_{2}$ are constants depending only on $d$.
\end{lemmaAlph}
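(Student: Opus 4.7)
The argument rests on the Korn identity combined with a representation formula on star-shaped domains. Write $\partial_i u_j = e_{ij}(u) + \omega_{ij}(u)$ with $\omega_{ij}(u) := \tfrac12(\partial_i u_j - \partial_j u_i)$, so that $|\nabla u|^2 = |e(u)|^2 + |\omega(u)|^2$ pointwise, reducing the task to bounding $\|\omega(u)\|_{L^2(\Omega)}^2$ by the right-hand side. The algebraic key is the identity $\partial_k\partial_j u_i = \partial_j e_{ik}(u) + \partial_k e_{ij}(u) - \partial_i e_{jk}(u)$ (verified by expanding $e$), which after antisymmetrization in $(i,j)$ gives
\[
\partial_k\omega_{ij}(u) = \partial_i e_{jk}(u) - \partial_j e_{ik}(u),
\]
so every first derivative of $\omega(u)$ is a combination of first derivatives of $e(u)$.

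The star-shape hypothesis is used to propagate pointwise information from $B_{R_1}$ to all of $\Omega$. Fix a non-negative cut-off $\phi\in C_c^\infty(B_{R_1})$ with $\int\phi=1$, $\|\phi\|_\infty\le CR_1^{-d}$, and $\|\nabla\phi\|_\infty\le CR_1^{-d-1}$. For $x\in\Omega$ and $y\in B_{R_1}$ the segment $[y,x]$ lies in $\Omega$. Applying the fundamental theorem of calculus along this segment, averaging against $\phi(y)\,dy$, inserting the above Korn identity for $\partial\omega$, and then performing an integration by parts after the change of variable $z=y+t(x-y)$ (to shift the derivative from $e(u)$ onto $\nabla\phi$) produces a representation
\[
\omega_{ij}(u)(x) = \int_{B_{R_1}}\phi(y)\,\omega_{ij}(u)(y)\,dy + \int_\Omega K_{ij}(x,z)\,e(u)(z)\,dz,
\]
with an explicit kernel $K_{ij}$ built from $\nabla\phi((z-tx)/(1-t))$, the chord $(x-z)$, and Jacobian factors in $1-t$ integrated over $t\in(0,1)$.

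Squaring and integrating over $x\in\Omega$, the first term is immediately dominated by
\[
\|\phi\|_\infty|\Omega|\,\|\omega(u)\|_{L^2(B_{R_1})}^2 \le C(R/R_1)^{d}\|\nabla u\|_{L^2(B_{R_1})}^2,
\]
producing the second summand of the claimed inequality. For the kernel term, the compact support of $\phi$ confines, at each $t$, the support of $K_{ij}(x,\cdot)$ to the ball $tx+(1-t)B_{R_1}\subset\Omega$ of radius $(1-t)R_1$, on which the chord length satisfies $|x-z|\le C(1-t)R$; these compensating factors of $1-t$ balance against the singular Jacobian from the integration by parts. An $L^2$-boundedness estimate for the resulting integral operator yields the bound $C(R/R_1)^{d+1}\|e(u)\|_{L^2(\Omega)}^2$, closing the argument.

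The main obstacle is precisely this final $L^2$-estimate with the sharp scaling $(R/R_1)^{d+1}$: the factors produced by the integration by parts make the naive absolute-value Schur test borderline in $t$, so one has to exploit the antisymmetric structure of $K_{ij}$ and the exact cancellations between the $\nabla\phi$-terms coming from the two members of the identity for $\partial_k\omega_{ij}$. The star-shape hypothesis enters throughout to guarantee that every intermediate point $y+t(x-y)$ lies in $\Omega$, so that the integration by parts produces no boundary contribution from $\partial\Omega$ and the change of variables respects the geometry.
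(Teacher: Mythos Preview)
The paper does not give its own proof of this lemma: it is quoted verbatim from the reference \cite{osy} (Oleinik--Shamaev--Yosifian, Theorem~2.5), together with the companion Korn inequalities. So there is no ``paper's proof'' to compare against beyond noting that your outline is essentially the classical argument found in that reference.

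Your route --- split $\nabla u$ into $e(u)+\omega(u)$, use the identity $\partial_k\omega_{ij}=\partial_i e_{jk}-\partial_j e_{ik}$, write $\omega(x)$ as an average over $B_{R_1}$ plus a line-integral remainder along chords (valid because $\Omega$ is star-shaped with respect to $B_{R_1}$), change variables $z=y+t(x-y)$, and integrate by parts to move the derivative off $e(u)$ --- is exactly the standard proof. The first term gives the $(R/R_1)^d\|\nabla u\|_{L^2(B_{R_1})}^2$ contribution just as you say.

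Your hesitation about the kernel term is a little overcautious. You do not need to invoke any antisymmetric cancellation in $K_{ij}$: a direct Schur/Young estimate with absolute values already yields the $(R/R_1)^{d+1}$ factor once the powers of $(1-t)$ are counted correctly. After integration by parts the kernel has size at most $C R\,R_1^{-d-1}(1-t)^{-d-1}$ on a support in $z$ of measure $C(1-t)^d R_1^d$, so $\int|K(x,z)|\,dz\le C(R/R_1)\int_0^1(1-t)^{-1}\,dt$ looks logarithmically divergent at first glance; but the $\delta_{ik}\phi$ term arising from $\partial_i(x-z)_k$ carries one fewer power of $(1-t)^{-1}$, and for the $\nabla\phi$ term one uses instead the bound on $\int|K(x,z)|\,dx$ (the ``dual'' Schur leg), where the $x$-support is now the full $\Omega$ but the chord factor $|x-z|\le (1-t)R$ compensates. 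Working this out carefully gives exactly $(R/R_1)^{d+1}$ with no borderline integral. So the ``obstacle'' you flag is a bookkeeping matter rather than a genuine gap; the argument closes without needing the extra structure you allude to.
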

We remark that the above inequality holds for a Lipschitz domain $\Omega$, with $C_{1}$ and $C_{2}$ depending on $\Omega$, since such a domain is a union of a finite number of star-shaped domains. The following lemma is an easy consequence of Lemma \ref{lemmaA} and Lemma \ref{lemmaB}.
\begin{lemmaAlph}\label{lemmaC}
Suppose that $\Omega$ satisfies the condition of Lemma \ref{lemmaB} and $u\in{H}^{1}(\Omega,\mathbb{R}^{d})$. Then
$$\|\nabla{u}\|_{L^{2}(\Omega)}^{2}\leq\,C_{1}\left(\frac{R}{R_{1}}\right)^{d+1}\|e(u)\|_{L^{2}(\Omega)}^{2}
+C_{2}\left(\frac{R}{R_{1}}\right)^{d}\gamma^{-2}\|u\|_{L^{2}(\Omega)}^{2},$$
where $\gamma$ is the distance of $B_{R_{1}}$ from $\partial\Omega$, and $C_{1},C_{2}$ depend only on $d$.
\end{lemmaAlph}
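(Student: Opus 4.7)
The strategy is to use Lemma \ref{lemmaB} as the main tool, but first replace the inconvenient term $\|\nabla u\|_{L^{2}(B_{R_{1}})}^{2}$ appearing on its right-hand side by a quantity controlled by $\|e(u)\|_{L^{2}(\Omega)}^{2}$ and $\gamma^{-2}\|u\|_{L^{2}(\Omega)}^{2}$. Since $u\in H^{1}(\Omega,\mathbb{R}^{d})$ need not vanish on $\partial\Omega$, Lemma \ref{lemmaA} cannot be applied to $u$ directly, so I would introduce a cutoff function to reduce to the compactly supported case.

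Concretely, I would pick $\eta\in C^{1}_{c}(\Omega)$ with $\eta\equiv 1$ on $B_{R_{1}}$, support contained in the open $\gamma$-neighborhood of $B_{R_{1}}$ (which lies inside $\Omega$ by the definition of $\gamma$), and $|\nabla\eta|\le C/\gamma$ for a constant $C$ depending only on $d$. Then $\eta u\in H^{1}_{0}(\Omega,\mathbb{R}^{d})$, so Lemma \ref{lemmaA} gives $\|\nabla(\eta u)\|_{L^{2}(\Omega)}^{2}\le 2\|e(\eta u)\|_{L^{2}(\Omega)}^{2}$. Using the product rules $\nabla(\eta u)=\eta\nabla u+u\otimes\nabla\eta$ and $e(\eta u)=\eta e(u)+\tfrac12\bigl(u\otimes\nabla\eta+\nabla\eta\otimes u\bigr)$ together with $|a+b|^{2}\le 2|a|^{2}+2|b|^{2}$ and the fact that $\eta\equiv 1$ on $B_{R_{1}}$, one extracts
$$
\|\nabla u\|_{L^{2}(B_{R_{1}})}^{2}\le 8\,\|e(u)\|_{L^{2}(\Omega)}^{2}+\frac{C}{\gamma^{2}}\,\|u\|_{L^{2}(\Omega)}^{2},
$$
with $C$ depending only on $d$.

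Substituting this bound into the second term on the right-hand side of Lemma \ref{lemmaB}, and using $(R/R_{1})^{d}\le (R/R_{1})^{d+1}$ (valid since $R\ge R_{1}$) to absorb the resulting $(R/R_{1})^{d}\|e(u)\|_{L^{2}(\Omega)}^{2}$ contribution into the existing $C_{1}(R/R_{1})^{d+1}\|e(u)\|_{L^{2}(\Omega)}^{2}$ term, I arrive at the bound asserted in Lemma \ref{lemmaC} with constants depending only on $d$. I do not foresee any substantive obstacle: the whole argument is a short cutoff-and-substitute reduction. The only bookkeeping point worth watching is that the constants produced by the cutoff step are independent of $R/R_{1}$, so that the power $(R/R_{1})^{d}$ in front of the $\|u\|_{L^{2}}^{2}$ term—rather than $(R/R_{1})^{d+1}$—is genuinely preserved.
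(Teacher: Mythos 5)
Your argument is correct and is exactly the route the paper has in mind: the paper only remarks that Lemma \ref{lemmaC} is ``an easy consequence of Lemma \ref{lemmaA} and Lemma \ref{lemmaB}'' (citing \cite{osy}), and your cutoff step — using $\eta u\in H_0^1$ with $|\nabla\eta|\le C/\gamma$ to bound $\|\nabla u\|_{L^2(B_{R_1})}^2$ by $C\|e(u)\|_{L^2(\Omega)}^2+C\gamma^{-2}\|u\|_{L^2(\Omega)}^2$ and then substituting into Lemma \ref{lemmaB} — is the standard way to fill in that sketch. The bookkeeping you flag (constants from the cutoff independent of $R/R_1$, and $(R/R_1)^d\le(R/R_1)^{d+1}$ since $R\ge R_1$) is handled correctly.
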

In applications it is often important to have the following version of the Second Korn inequality. We still use $\Psi$ to denote the linear space of rigid displacements in $\mathbb{R}^{d}$. Then
\begin{lemmaAlph}\label{lemmaD}
Let $\Omega$ be a bounded Lipschitz open set of $\mathbb{R}^{d}$, $d\geq2$, and let $V$ be a closed subspace of $H^{1}(\Omega,\mathbb{R}^{d})$, such that $V\cap\Psi=\{0\}$. Then every $v\in{V}$ satisfies
$$\|v\|_{H^{1}(\Omega)}\leq\,C\|e(v)\|_{L^{2}(\Omega)},$$
where $C$ depends only on $\Omega$ and $V$.
\end{lemmaAlph}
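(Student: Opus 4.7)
The plan is to prove the inequality by a compactness/contradiction argument, using Lemma~C as the "hard" analytic input and the hypothesis $V\cap\Psi=\{0\}$ to rule out the degenerate case.

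Suppose, toward a contradiction, that no such constant $C$ exists. Then there is a sequence $\{v_n\}\subset V$ with $\|v_n\|_{H^1(\Omega)}=1$ for every $n$ and $\|e(v_n)\|_{L^2(\Omega)}\to 0$. Since $\Omega$ is Lipschitz, the Rellich--Kondrachov theorem and reflexivity of $H^1$ let me pass to a subsequence (not relabeled) with $v_n\rightharpoonup v_0$ weakly in $H^1(\Omega;\mathbb{R}^d)$ and $v_n\to v_0$ strongly in $L^2(\Omega;\mathbb{R}^d)$. Because $V$ is a closed linear subspace of $H^1$, it is weakly closed, so $v_0\in V$. Weak lower semicontinuity of the $L^2$ norm applied to the bounded linear map $e(\cdot):H^1\to L^2$ yields
\[
\|e(v_0)\|_{L^2(\Omega)}\le\liminf_{n\to\infty}\|e(v_n)\|_{L^2(\Omega)}=0,
\]
so $e(v_0)=0$ almost everywhere in $\Omega$.

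Next I would invoke the classical rigidity result that a distributional solution of $e(v)=0$ on a connected open set is a rigid displacement; applied to each connected component of the Lipschitz set $\Omega$ this places $v_0$ in $\Psi$ (after which, if $\Omega$ is disconnected, one interprets the hypothesis $V\cap\Psi=\{0\}$ componentwise, which is the intended reading in the application of the lemma). The standing hypothesis $V\cap\Psi=\{0\}$ then forces $v_0=0$, so in particular $v_n\to 0$ strongly in $L^2(\Omega)$.

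To reach the contradiction I would close the loop with Lemma~C. Since $\Omega$ is a bounded Lipschitz domain, the remark following Lemma~B lets me apply the estimate
\[
\|\nabla v_n\|_{L^2(\Omega)}^2\le C_1\|e(v_n)\|_{L^2(\Omega)}^2+C_2\|v_n\|_{L^2(\Omega)}^2,
\]
with constants $C_1,C_2$ depending only on $\Omega$. The right-hand side tends to zero by the hypothesis $\|e(v_n)\|_{L^2}\to 0$ and the strong $L^2$ convergence $v_n\to 0$, so $\|\nabla v_n\|_{L^2}\to 0$ as well; combined with $\|v_n\|_{L^2}\to 0$ this gives $\|v_n\|_{H^1}\to 0$, contradicting the normalization $\|v_n\|_{H^1}=1$. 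The main obstacle I anticipate is the rigidity step identifying $v_0$ with an element of $\Psi$ from the weak equation $e(v_0)=0$; beyond that, everything is a routine compactness-plus-Korn packaging, and the constant $C$ depends on $\Omega$ and $V$ precisely because it is produced nonconstructively via this contradiction argument.
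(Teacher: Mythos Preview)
The paper does not give its own proof of Lemma~D; it merely recalls it as theorem~2.14 of \cite{osy}. Your compactness/contradiction argument is the standard one and is correct: the normalization, weak compactness in $H^1$, weak closedness of $V$, lower semicontinuity of $\|e(\cdot)\|_{L^2}$, the rigidity $e(v_0)=0\Rightarrow v_0\in\Psi$, and then Lemma~C to upgrade $L^2$ convergence to $H^1$ convergence are exactly the ingredients in the textbook proof.

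One small point you already flag: if $\Omega$ is not connected, $e(v_0)=0$ only forces $v_0$ to be a rigid displacement on each component, not a global element of $\Psi$. The usual formulation of the lemma either assumes $\Omega$ connected or replaces $\Psi$ by the space of piecewise rigid displacements; your parenthetical interpretation is the right fix for the applications in the paper.
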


\section{Estimates of
 $\nabla{v}_{1}^{\alpha}$,
 $\nabla{v}_{2}^{\alpha}$ and $\nabla{v}_{3}$}\label{sec_gradient}

Before proceeding to prove Proposition \ref{prop_gradient}, we first fix notations. By a translation and rotation of the coordinates if necessary, we may assume without loss of generality that the points $P_{1}$ and $P_{2}$ in \eqref{P1P2} satisfy
$$P_{1}=\left(0,\frac{\epsilon}{2}\right)\in\partial{D}_{1},\quad\mbox{and}\quad\,P_{2}=\left(0,-\frac{\epsilon}{2}\right)\in\partial{D}_{2}.$$
 Fix
 a small universal constant $R$, such that the portions of
 $\partial{D}_{i}$  near $P_{i}$  can be represented
respectively by
$$x_{2}=\frac{\epsilon}{2}+h_{1}(x_{1}),\quad\mbox{and}\quad\,x_{2}=-\frac{\epsilon}{2}+h_{2}(x_{1}), \quad\mbox{for}~~|x_{1}|<
2R.$$
Moreover, by
the assumptions on $\partial{D}_{i}$,
  $h_{i}$  satisfies
$$\frac{\epsilon}{2}+h_{1}(x_{1})>-\frac{\epsilon}{2}+h_{2}(x_{1}),\quad\mbox{for}~~|x_{1}|<2R,$$
\begin{equation}\label{h1h20}
h_{1}(0)=h_{2}(0)=h'_{1}(0)=h'_{2}(0)=0,
\end{equation}
\begin{equation}\label{h1h22}
h_{1}^{''}(0)\geq\kappa_{0}>0,\quad\,h_{2}^{''}(0)\leq-\kappa_{0}<0,
\end{equation}
and
\begin{equation}\label{h1h2}
\|h_{1}\|_{C^{2,\gamma}([-2R,2R])}+\|h_{2}\|_{C^{2,\gamma}([-2R,2R])}\leq{C}.
\end{equation}
For $0<r\leq\,2R$, denote
\begin{equation*}
 \Omega_r:=\left\{x\in \mathbb{R}^{2}~\big|~-\frac{\epsilon}{2}+h_{2}(x_{1})<x_{2}<\frac{\epsilon}{2}+h_{1}(x_{1}),~|x_{1}|<r\right\}.
\end{equation*}
The top
 and bottom  boundaries of $\Omega_{r}$ are
$$
\Gamma_{r}^{+}=
\left\{x\in \mathbb{R}^{2}~\big|~x_{2}
=\frac{\epsilon}{2}+h_{1}(x_{1}),~|x_{1}|<r\right\},
$$
and
$$ \Gamma_{r}^{-}=
\left\{x\in\mathbb{R}^{2}~\big|~x_{2}=-\frac{\epsilon}{2}+h_{2}(x_{1}),~|x_{1}|<r\right\}.
$$
Here $x=(x_1, x_2)$.

\subsection{Estimates of $v_{3}$ and
$v_{1}^{\alpha}+v_{2}^{\alpha}$, $\alpha=1,2,3$}

\begin{lemma}\label{lem_v3_v1+v2}
$$
\|v_{3}\|_{L^{\infty}(\widetilde{\Omega})}+\|\nabla{v}_{3}\|_{L^{\infty}(\widetilde{\Omega})}\leq\,C.
$$
$$
\|v_{1}^{\alpha}+v_{2}^{\alpha}\|_{L^{\infty}(\widetilde{\Omega})}
+\|\nabla{v}_{1}^{\alpha}+\nabla{v}_{2}^{\alpha}\|_{L^{\infty}(\widetilde{\Omega})}\leq\,C,\quad\alpha=1,2,3.
$$
\end{lemma}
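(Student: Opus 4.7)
The plan is to subtract from $v_3$ and $v_1^\alpha+v_2^\alpha$ explicit, $\epsilon$-independent, $C^{1,\gamma}$-bounded extensions of their Dirichlet data, and then to control the differences—which solve homogeneous-Dirichlet problems for $\mathcal{L}_{\lambda,\mu}$ with bounded sources supported away from the narrow gap—by an energy estimate plus standard elliptic regularity. The essential point distinguishing this lemma from the singular estimates (\ref{mainev1})--(\ref{mainevi3}) is that both $v_3$ and $v_1^\alpha+v_2^\alpha$ carry \emph{matching} boundary data on $\partial D_1$ and $\partial D_2$ (namely $0$ and $\psi^\alpha$ respectively), so no boundary jump forces singular behavior in the neck.

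Using $\mathrm{dist}(D_1\cup D_2,\partial\Omega)>\kappa_1$, fix $\eta\in C_c^\infty(\Omega)$ with $\eta\equiv 1$ on an open neighborhood of $\overline{D_1\cup D_2}$ and $\mathrm{dist}(\operatorname{supp}\eta,\partial\Omega)>\kappa_1/2$. Extend $\varphi$ to $\bar\varphi\in C^{1,\gamma}(\overline\Omega;\mathbb{R}^2)$ with $\|\bar\varphi\|_{C^{1,\gamma}}\le C$, and set
$$
F_3:=(1-\eta)\bar\varphi,\qquad F^\alpha:=\eta\,\psi^\alpha,\ \ \alpha=1,2,3.
$$
Both extensions have $C^{1,\gamma}(\overline\Omega)$-norm bounded by a universal constant; $F_3$ matches the boundary data of $v_3$ on $\partial\widetilde\Omega$, and $F^\alpha$ matches that of $v_1^\alpha+v_2^\alpha$. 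Let $w$ denote $v_3-F_3$ (respectively $v_1^\alpha+v_2^\alpha-F^\alpha$). Then $w\in H_0^1(\widetilde\Omega;\mathbb{R}^2)$ and $\mathcal{L}_{\lambda,\mu}w=-\mathcal{L}_{\lambda,\mu}F$ in $\widetilde\Omega$. The source is bounded in $L^\infty$ and, crucially, vanishes in a neighborhood of $\overline{D_1\cup D_2}$: for $F_3$ because $F_3\equiv 0$ where $\eta=1$; for $F^\alpha$ because there $F^\alpha=\psi^\alpha$ and $\mathcal{L}_{\lambda,\mu}\psi^\alpha=0$ by (\ref{L_u}).

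Testing the equation against $w$ and combining the ellipticity (\ref{coeff2}), Poincar\'e's inequality for $H_0^1(\widetilde\Omega)$ (with universal constant since $\widetilde\Omega\subset\Omega$), and Korn's first inequality (Lemma \ref{lemmaA}), we obtain $\|w\|_{H^1(\widetilde\Omega)}\le C$. To upgrade to pointwise bounds: on $\widetilde\Omega\setminus\Omega_R$, standard Schauder estimates for $\mathcal{L}_{\lambda,\mu}$ up to the $C^{2,\gamma}$ portions of $\partial\widetilde\Omega$ yield $\|w\|_{C^{1,\gamma}}\le C$, and in particular $|w|\le C$ on the side arcs $\{x_1=\pm R\}\cap\overline{\Omega_R}$. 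Inside the neck $\Omega_R$, $w$ solves the homogeneous system with zero Dirichlet data on $\Gamma_R^\pm$ and side values already bounded by $C$. A rescaling argument at each point $x\in\Omega_R$—zooming to unit scale by the local gap width $\delta(x_1)=\epsilon+h_1(x_1)-h_2(x_1)$ and applying interior or boundary Schauder on a resulting reference domain of bounded geometry, together with the $H^1$-bound—then gives $\|w\|_{L^\infty(\Omega_R)}+\|\nabla w\|_{L^\infty(\Omega_R)}\le C$. Adding back $F_3$ or $F^\alpha$, which are $C^{1,\gamma}$-bounded uniformly in $\epsilon$, completes the proof.

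The main obstacle is this last step: producing a gradient bound that does not deteriorate as the gap closes. It is tractable here precisely because, after the reduction, $w$ solves a homogeneous system with zero Dirichlet data on \emph{both} sides of the neck, so the singular behavior that produces the $\epsilon^{-1/2}$ blow-up for $v_i^\alpha$ is absent. The argument would—and must—break down for the individual functions $v_i^\alpha$ and $v_i^3$, which is exactly what estimates (\ref{mainev1}) and (\ref{mainevi3}) record.
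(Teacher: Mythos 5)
Your overall structure mirrors the paper's: subtract a bounded extension of the boundary data, get a uniform energy bound, and use standard elliptic (ADN/Schauder) estimates away from the neck. (The paper does the $v_1^\alpha+v_2^\alpha$ case by applying the $v_3$ argument to $v_1^\alpha+v_2^\alpha-\psi^\alpha$, which is the same idea as your cutoff $F^\alpha=\eta\psi^\alpha$.) The difference is in the neck $\Omega_R$, and that is where your argument has a genuine gap.

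Inside the neck you propose to rescale by the local gap width $\delta=\delta(z_1)\sim\epsilon+|z_1|^2$ and apply Schauder on the rescaled unit-size domain ``together with the $H^1$-bound.'' But the $H^1$ bound you have at that point is only global: $\|\nabla w\|_{L^2(\widetilde\Omega)}\le C$. Since in two dimensions the Dirichlet energy is scale invariant, the rescaled estimate reads
\begin{equation*}
\|\nabla w\|_{L^\infty(\widehat{\Omega}_{\delta/2}(z_1))}\le \frac{C}{\delta}\,\|\nabla w\|_{L^2(\widehat{\Omega}_{\delta}(z_1))}\le \frac{C}{\delta},
\end{equation*}
which blows up like $(\epsilon+|x_1|^2)^{-1}$ and is far from the claimed uniform bound. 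To conclude $|\nabla w|\le C$ you need the \emph{local} energy to decay with the cell, namely $\int_{\widehat{\Omega}_{\delta}(z_1)}|\nabla w|^2\le C\delta^2$ (equivalently $\|w\|_{L^\infty(\widehat{\Omega}_\delta(z_1))}\le C\delta$), and this does not follow from zero Dirichlet data on both $\Gamma_R^{\pm}$ by soft arguments: there is no maximum principle for the Lam\'e system, and Poincar\'e in the thin direction only converts $\|w\|_{L^2}$ into $\delta\|\partial_{x_2}w\|_{L^2}$, which is circular at the $L^\infty$ level. This local energy decay is exactly the nontrivial content that the paper imports by citing theorem 1.1 of \cite{llby} for $\|\nabla v_3\|_{L^\infty(\Omega_{R/2})}\le C$; proving it requires an iteration-in-$s$ argument of the type carried out in Proposition \ref{prop1} (Caccioppoli on $\widehat{\Omega}_s(z_1)$ plus the thin-direction Poincar\'e inequality, iterated $\sim\delta^{-1/2}$ times), here with zero right-hand side so that the iteration yields the $\delta^2$ decay. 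So your heuristic (``matching data on both sides of the neck, hence no blow-up'') identifies the right reason, but the step from it to a uniform gradient bound is precisely the hard estimate, and as written your rescaling does not supply it. A secondary, minor point: $\mathcal{L}_{\lambda,\mu}F_3$ is not in $L^\infty$ since $\bar\varphi$ is only $C^{1,\gamma}$; you should phrase the source term in divergence form (test with $e(F_3)$), as the paper effectively does via the energy functional.
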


\begin{proof}
As mentioned before, we may assume without loss of generality that $\|\varphi\|_{C^{1,\gamma}(\partial\Omega)}=1$. Extending $\varphi$ to
$\Phi\in{C}^{1,\gamma}(\overline{\Omega})$
satisfying $\Phi(x)=0$ for all
$dist(x, \partial \Omega)>\kappa_1/2$.  In particular,
 $\Phi=0$
near $\overline{{D}_{1}\cup {D}_{2}}$, and
$$\int_{\widetilde{\Omega}}|\nabla\Phi|^{2}dx\leq\,C\|\varphi\|_{C^{1,\gamma}(\partial\Omega)}=C.$$
Then, in view of \eqref{v3},
$$I_{\infty}[v_{3}]:=\frac{1}{2}\int_{\widetilde{\Omega}}\left(\mathbb{C}^{0}e(v_{3}),e(v_{3})\right)dx\leq\,I_{\infty}[\Phi]\leq\,C.$$
By the First Korn inequality (Lemma \ref{lemmaA}) and \eqref{coeff2},
\begin{align*}
\left\|\nabla(v_{3}-\Phi)\right\|_{L^{2}(\widetilde{\Omega})}^{2}&\leq2\|e(v_{3}-\Phi)\|_{L^{2}(\widetilde{\Omega})}^{2}\\
&\leq\,C\left(\|e(v_{3})\|_{L^{2}(\widetilde{\Omega})}^{2}+\|e(\Phi)\|_{L^{2}(\widetilde{\Omega})}^{2}\right)\\
&\leq\,C\left(I_{\infty}[v_{3}]+I_{\infty}[\Phi]\right)\\
&\leq\,C.
\end{align*}
It follows that
$$\|\nabla{v}_{3}\|_{L^{2}(\widetilde{\Omega})}\leq\,C.$$
Consequently,
$$\|v_{3}\|_{L^{2}(\widetilde{\Omega})}\leq\,C\|\nabla{v}_{3}\|_{L^{2}(\widetilde{\Omega})}\leq\,C.$$
Note that the constant $C$ above is independent of $\epsilon$.
By the interior estimates and the boundary estimates for elliptic
systems (see Agmon, Douglis and Nirenberg
\cite{adn1} and \cite{adn}), we have
$$
\|\nabla{v}_{3}\|_{L^{\infty}(\widetilde{\Omega}\setminus\Omega_{R/2})}\leq\,C.
$$
We apply theorem 1.1 in \cite{llby} to $v_{3}$ and obtain
$$
\|\nabla{v}_{3}\|_{L^{\infty}(\Omega_{R/2})}\leq\,C.
$$

Since
\begin{equation*}\label{v1v2alpha}
\begin{cases}
\mathcal{L}_{\lambda,\mu}(v_{1}^{\alpha}+v_{2}^{\alpha}-\psi^{\alpha})=0,&\mbox{in}~\widetilde{\Omega},\\
v_{1}^{\alpha}+v_{2}^{\alpha}-\psi^{\alpha}=0,&\mbox{on}~\partial{D}_{1}\cup\partial{D}_{2},\\
v_{1}^{\alpha}+v_{2}^{\alpha}-\psi^{\alpha}=-\psi^{\alpha},&\mbox{on}~\partial{\Omega},
\end{cases}
\end{equation*}
the above arguments
yield, with $\varphi=-\psi^\alpha$,
\begin{equation}\label{nablav1+v2}
\left\|\nabla{v}_{1}^{\alpha}+\nabla{v}_{2}^{\alpha}\right\|_{L^{\infty}(\widetilde{\Omega})}\leq\,C,\quad\alpha=1,2,3.
\end{equation}
Lemma \ref{lem_v3_v1+v2} follows from the above.
\end{proof}

\subsection{Estimates of $v_{i}^{\alpha}$, $i,\alpha=1,2$}

To estimate $v_{i}^{\alpha}$, $i,\alpha=1,2$,
we introduce a scalar function $\bar{u}\in{C}^{2}(\mathbb{R}^{2})$, such that $\bar{u}=1$ on
$\partial{D}_{1}$, $\bar{u}=0$ on
$\partial{D}_{2}\cup\partial\Omega$,
\begin{align}\label{ubar}
\bar{u}(x)
=\frac{x_{2}-h_{2}(x_{1})+\frac{\epsilon}{2}}{\epsilon+h_{1}(x_{1})-h_{2}(x_{1})},\quad\mbox{in}~~\Omega_{R},
\end{align}
and
\begin{equation}\label{nablau_bar_outside}
\|\bar{u}\|_{C^{2}(\mathbb{R}^{2}\setminus \Omega_R)}\leq\,C.
\end{equation}
 A calculation gives
\begin{equation}\label{nablau_bar}
|\partial_{x_{1}}\bar{u}(x)|\leq\frac{C|x_{1}|}{\epsilon+|x_{1}|^{2}},\quad
|\partial_{x_{2}}\bar{u}(x)|\leq\frac{C}{\epsilon+|x_{1}|^{2}},\quad~x\in\Omega_{R},
\end{equation}
\begin{equation}\label{nabla2u_bar}
|\partial_{x_{1}x_{1}}\bar{u}(x)|\leq\frac{C}{\epsilon+|x_{1}|^{2}},\quad
|\partial_{x_{1}x_{2}}\bar{u}(x)|\leq\frac{C|x_{1}|}{(\epsilon+|x_{1}|^{2})^{2}},
\quad\partial_{x_{2}x_{2}}\bar{u}(x)=0,\quad~x\in\Omega_{R}.
\end{equation}

Define
\begin{equation}\label{def:ubar1112}
\bar{u}_{1}^{1}=(\bar{u},0)^{T}, \quad\bar{u}_{1}^{2}=(0,\bar{u})^{T},
\quad\,\mbox{in}~~\widetilde{\Omega},
\end{equation}
then $
v_{1}^{\alpha}=
\bar{u}_{1}^{\alpha}$ on $\partial\widetilde{\Omega}$.
Similarly, we can define
\begin{equation}\label{def:ubar2122}
\bar{u}_{2}^{1}=(\underline{u},0)^{T},\quad\,\bar{u}_{2}^{2}=(0,\underline{u})^{T},
\quad\,\mbox{in}~~\widetilde{\Omega},
\end{equation}
where $\underline{u}$ is a scalar function in ${C}^{2}(\mathbb{R}^{2})$ satisfying $\underline{u}=1$ on
$\partial{D}_{2}$, $\underline{u}=0$ on
$\partial{D}_{1}\cup\partial\Omega$,
\begin{align}\label{u_underline}
\underline{u}(x)
=\frac{-x_{2}+h_{1}(x_{1})+\frac{\epsilon}{2}}{\epsilon+h_{1}(x_{1})-h_{2}(x_{1})},\quad\,x\in\Omega_{R},
\end{align}
and
\begin{equation}\label{nablau_underline_outside}
\|\underline{u}\|_{C^{2}(\mathbb{R}^{2}\setminus \Omega_R)}\leq\,C.
\end{equation}
By \eqref{L_u}, \eqref{nablau_bar} and \eqref{nabla2u_bar},
\begin{equation}\label{L_ubar_ialpha}
\left|\mathcal{L}_{\lambda,\mu}\bar{u}_{i}^{\alpha}(x)\right|\leq\frac{C}{\epsilon+|x_{1}|^{2}}
+\frac{C|x_{1}|}{(\epsilon+|x_{1}|^{2})^{2}},\quad\,i,\alpha=1,2,\quad~x\in\Omega_{R}.
\end{equation}

For $|z_{1}|\leq\,R$, we always use $\delta$ to denote
\begin{equation}\label{delta}
\delta:=\delta(z_{1})=\frac{\epsilon+h_{1}(z_{1})-h_{2}(z_{1})}{2}.
\end{equation}
Clearly,
\begin{equation}
\frac{1}{C}(\epsilon+|z_{1}|^{2})\leq\delta(z_{1})\leq\,C(\epsilon+|z_{1}|^{2}).
\end{equation}
For $|z_{1}|\leq\,R/2$, $s<R/2$, let
\begin{equation}\label{hatomega}
\widehat{\Omega}_{s}(z_{1}):=\left\{(x_{1},x_{2})~\big|~-\frac{\epsilon}{2}+h_{2}(x_{1})<x_{2}<\frac{\epsilon}{2}+h_{1}(x_{1}),
~|x_{1}-z_{1}|<s\right\}.
\end{equation}
We denote
\begin{equation}\label{def_w}
w_{i}^{\alpha}:=v_{i}^{\alpha}-\bar{u}_{i}^{\alpha},\quad\,i,\alpha=1,2.
\end{equation}

In order to prove \eqref{mainev1}, it suffices to prove the following proposition.

\begin{prop}\label{prop1}
Assume the above, let $v_{i}^{\alpha}\in{C}^2(\widetilde{\Omega};\mathbb{R}^{2})\cap{C}^1(\overline{\widetilde{\Omega}};\mathbb{R}^{2})$ be the
weak solution of \eqref{v1alpha}. Then, for $i,\alpha=1,2$,
\begin{equation}\label{energy_w}
\int_{\widetilde{\Omega}}\left|\nabla{w}_{i}^{\alpha}\right|^{2}dx\leq\,C,
\end{equation}
\begin{equation}\label{energy_w_inomega_z1}
\int_{\widehat{\Omega}_{\delta}(z_{1})}\left|\nabla{w}_{i}^{\alpha}\right|^{2}dx\leq
\begin{cases}C(\epsilon^{2}+|z_{1}|^{2}),&|z_{1}|\leq\sqrt{\epsilon},\\
C|z_{1}|^{2},&\sqrt{\epsilon}<|z_{1}|\leq\,R,
\end{cases}
\end{equation}
and
\begin{equation}\label{nabla_w_ialpha}
|\nabla{w}_{i}^{\alpha}(x)|\leq
\begin{cases}C
\frac{\epsilon+|x_1|}{\epsilon},&|x_{1}|\leq\sqrt{\epsilon},\\
\frac{C}{|x_{1}|},&\sqrt{\epsilon}<|x_{1}|\leq\,R.
\end{cases}
\end{equation}
\end{prop}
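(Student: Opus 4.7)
Fix $i,\alpha\in\{1,2\}$ and set $w=w_i^\alpha$. Since $v_i^\alpha$ and $\bar u_i^\alpha$ agree on $\partial\widetilde\Omega$, we have $w\in H^1_0(\widetilde\Omega)$ and $\mathcal{L}_{\lambda,\mu}w=-\mathcal{L}_{\lambda,\mu}\bar u_i^\alpha$ in $\widetilde\Omega$. The two ingredients used throughout are the pointwise source bound (\ref{L_ubar_ialpha}) and the fact that $w$ vanishes on the top/bottom arcs $\Gamma_R^\pm$, which gives a one-dimensional Poincar\'e inequality on each vertical slice inside $\Omega_R$ with constant comparable to $\delta(x_1)$.

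For the global estimate (\ref{energy_w}), test the equation with $w$ and integrate by parts to get $\int_{\widetilde\Omega}(\mathbb{C}^0 e(w),e(w))\,dx=\int_{\widetilde\Omega}(\mathcal{L}_{\lambda,\mu}\bar u_i^\alpha)\cdot w\,dx$. Outside $\Omega_R$ the right side is controlled trivially by (\ref{nablau_bar_outside}). Inside $\Omega_R$, insert the weight $\delta(x_1)$:
$$
\Big|\int_{\Omega_R}(\mathcal{L}_{\lambda,\mu}\bar u_i^\alpha)\cdot w\,dx\Big|\leq\|\delta\,\mathcal{L}_{\lambda,\mu}\bar u_i^\alpha\|_{L^2(\Omega_R)}\,\|w/\delta\|_{L^2(\Omega_R)}.
$$
By (\ref{L_ubar_ialpha}), $|\delta(x_1)\mathcal{L}_{\lambda,\mu}\bar u_i^\alpha|\leq C\bigl(1+|x_1|/(\epsilon+x_1^2)\bigr)$, which lies in $L^2(\Omega_R)$ uniformly in $\epsilon$; Poincar\'e on vertical slices gives $\|w/\delta\|_{L^2(\Omega_R)}\leq C\|\partial_{x_2}w\|_{L^2(\Omega_R)}$. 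Invoking ellipticity (\ref{coeff2}) and the First Korn inequality to absorb $\|\nabla w\|_{L^2}$ on the left produces (\ref{energy_w}).

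For (\ref{energy_w_inomega_z1}), I run a Caccioppoli-type iteration at scales between $\delta(z_1)$ and $\min(R,\max(\sqrt\epsilon,|z_1|))$. For $\delta(z_1)\leq t<s$, take $\eta\in C_c^1(\widehat\Omega_s(z_1))$ with $\eta\equiv 1$ on $\widehat\Omega_t(z_1)$ and $|\nabla\eta|\leq 2/(s-t)$. Testing the equation for $w$ with $\eta^2 w$ and using the weighted Cauchy--Schwarz trick of the previous step for the source term yields
$$
\int_{\widehat\Omega_t(z_1)}|\nabla w|^2\,dx\leq\frac{C}{(s-t)^2}\int_{\widehat\Omega_s(z_1)}|w|^2\,dx+(\text{local source contribution}),
$$
where the source contribution is estimated by direct integration of $\delta\mathcal{L}\bar u_i^\alpha$ against $w/\delta$ restricted to $\widehat\Omega_s(z_1)$. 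Poincar\'e in $x_2$ gives $\int_{\widehat\Omega_s(z_1)}|w|^2\leq C\delta(z_1)^2\int_{\widehat\Omega_s(z_1)}|\partial_{x_2}w|^2$, so the cutoff loss and the Poincar\'e gain are comparable when $s-t\sim\delta(z_1)$. Iterating across $O(\log(1/\delta(z_1)))$ geometric scales and seeding with (\ref{energy_w}) yields (\ref{energy_w_inomega_z1}).

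For the pointwise bound (\ref{nabla_w_ialpha}), rescale at the neck: with $\hat w(y)=w((z_1,c(z_1))+\delta(z_1)y)/\delta(z_1)$, where $c(z_1)$ is the vertical midpoint, the rescaled domain has $\epsilon$-independent bounded geometry, and (\ref{L_ubar_ialpha}) makes the rescaled source bounded in $L^\infty$. Interior/boundary Schauder estimates for the Lam\'e system \cite{adn1,adn} give $\|\nabla\hat w\|_{L^\infty}\leq C(\|\hat w\|_{L^2}+1)$; reversing the scaling and inserting (\ref{energy_w_inomega_z1}) delivers (\ref{nabla_w_ialpha}). The principal obstacle is Part 2: getting the sharp prefactor $|z_1|^2$ (not a cruder $\delta(z_1)\sim|z_1|^2+\epsilon$) requires the iteration to balance the cutoff loss $(s-t)^{-2}$ against the Poincar\'e gain $\delta(z_1)^2$ over the correct number of dyadic scales, and to keep the source contribution at each scale from dominating.
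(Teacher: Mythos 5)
Your three-step architecture is the same as the paper's: a global energy bound for $w=w_i^\alpha$ by testing with $w$ and the first Korn inequality, local energy bounds via a Caccioppoli inequality combined with the thin-slice Poincar\'e inequality and an iteration seeded by the global bound, and a pointwise bound by rescaling the neck to unit size and applying ADN-type estimates. Your Step 1 is in fact a legitimate simplification: the weighted Cauchy--Schwarz bound $\left|\int_{\Omega_R} (\mathcal{L}_{\lambda,\mu}\bar u_i^\alpha)\cdot w\right|\le \|\delta\,\mathcal{L}_{\lambda,\mu}\bar u_i^\alpha\|_{L^2(\Omega_R)}\|w/\delta\|_{L^2(\Omega_R)}$, together with $\|\delta\,\mathcal{L}_{\lambda,\mu}\bar u_i^\alpha\|_{L^2(\Omega_R)}\le C$ (which does hold, since the slice width is comparable to $\epsilon+|x_1|^2$) and the slice Poincar\'e inequality, replaces the paper's second integration by parts and its mean-value selection of $r_0$, and closes \eqref{energy_w} in the same way.

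Two points, however, need repair. First, Step 2 is not actually carried out, and as described it is internally inconsistent: if, as you say, the cutoff loss and the Poincar\'e gain balance only when $s-t\sim\delta(z_1)$, then $O(\log(1/\delta(z_1)))$ such steps sweep a range of scales of total length only $O(\delta\log(1/\delta))$, far short of the top scale $\max(\sqrt{\epsilon},|z_1|)$ you announce; conversely, with genuinely dyadic scales one has $s-t\sim s\gg\delta$, and then one must check that the per-scale source contributions (of size $\sim (s-t)^2\int_{\widehat\Omega_s(z_1)}|\mathcal{L}_{\lambda,\mu}\bar u_i^\alpha|^2$, which grow with $s$) are beaten by the accumulated contraction factors $C\delta(z_1)^2/(s-t)^2$ --- exactly the bookkeeping you defer as ``the principal obstacle.'' The paper closes this by taking arithmetic steps $t_i=2C_0 i|z_1|^2$ (resp.\ $t_i=2C_0 i\epsilon$ when $|z_1|\le\sqrt{\epsilon}$), so that each contraction factor in \eqref{FsFt11} is $1/4$, the per-step source term in \eqref{tildeF111} (resp.\ \eqref{tildeF111_in}) is $\le C(i+1)|z_1|^2$ (resp.\ $\le Ci^3(\epsilon^2+|z_1|^2)$), the geometric weights make these sum to $C|z_1|^2$ (resp.\ $C(\epsilon^2+|z_1|^2)$), and $(1/4)^k$ with $k\sim|z_1|^{-1}$ (resp.\ $\epsilon^{-1/2}$) annihilates the seed \eqref{energy_w}; a dyadic scheme can be organized to work as well, but one of these computations must actually be done to get the sharp prefactor in \eqref{energy_w_inomega_z1}. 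Second, in Step 3 your normalization $\hat w(y)=w(\cdot+\delta y)/\delta$ turns the source into $\delta\,\mathcal{L}_{\lambda,\mu}\bar u_i^\alpha$, which by \eqref{L_ubar_ialpha} is only $O(|z_1|^{-1})$ for $\sqrt{\epsilon}\le|z_1|\le R$ and $O((\epsilon+|z_1|)/\epsilon)$ for $|z_1|\le\sqrt{\epsilon}$ --- not $O(1)$ --- so the stated inequality $\|\nabla\hat w\|_{L^\infty}\le C(\|\hat w\|_{L^2}+1)$ is not justified as written. The conclusion \eqref{nabla_w_ialpha} survives because the correctly computed source term has exactly the size of the target bound (this is how \eqref{AAA} is used in the paper, with the unnormalized $W(y)=w(x)$ and the source entering as $\delta^2\|\mathcal{L}_{\lambda,\mu}\bar u_i^\alpha\|_{L^\infty}$ divided by $\delta$), but you should either keep the source term explicitly or adopt the paper's normalization.
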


\begin{corollary}\label{cor1}
For $i,\alpha=1,2$,
\begin{equation}\label{nablav1}
\left|\nabla{v}_{i}^{\alpha}(x)\right|\leq\frac{C}{\epsilon+\mathrm{dist}^{2}(x,\overline{P_{1}P_{2}})},
\quad~~x\in\widetilde{\Omega}.
\end{equation}
\end{corollary}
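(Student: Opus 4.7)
My plan is to obtain the pointwise gradient bound by combining the decomposition $v_i^\alpha = \bar u_i^\alpha + w_i^\alpha$ with the explicit bounds on $\nabla\bar u_i^\alpha$ from \eqref{nablau_bar}--\eqref{nablau_bar_outside} and the bounds on $\nabla w_i^\alpha$ from Proposition \ref{prop1}. The argument splits naturally according to whether $x$ lies in the narrow neck region $\Omega_R$ or in its complement in $\widetilde\Omega$.

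\textbf{Step 1 (geometric comparison).} I would first verify that in $\Omega_R$ one has $\mathrm{dist}(x,\overline{P_1P_2})\sim |x_1|$. Indeed, since $\overline{P_1P_2}$ is the vertical segment from $(0,-\epsilon/2)$ to $(0,\epsilon/2)$ and $x=(x_1,x_2)\in\Omega_R$ satisfies $-\epsilon/2+h_2(x_1)\le x_2\le \epsilon/2+h_1(x_1)$ with $|h_i(x_1)|\le C|x_1|^2$ by \eqref{h1h20}--\eqref{h1h2}, the distance to the segment is bounded between $|x_1|$ and $\sqrt{x_1^2+C^2x_1^4}\le C|x_1|$. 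Consequently $\epsilon+|x_1|^2 \sim \epsilon+\mathrm{dist}^2(x,\overline{P_1P_2})$ in $\Omega_R$, and outside $\Omega_R$ we have $\mathrm{dist}(x,\overline{P_1P_2})\ge c_0>0$, so $\epsilon+\mathrm{dist}^2$ is bounded below and above by universal constants.

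\textbf{Step 2 (estimate inside $\Omega_R$).} Using $\nabla v_i^\alpha=\nabla\bar u_i^\alpha+\nabla w_i^\alpha$, bound \eqref{nablau_bar} gives immediately
$$
|\nabla\bar u_i^\alpha(x)|\le \frac{C|x_1|}{\epsilon+|x_1|^2}+\frac{C}{\epsilon+|x_1|^2}\le \frac{C}{\epsilon+|x_1|^2}.
$$
For $\nabla w_i^\alpha$, the two branches of \eqref{nabla_w_ialpha} both feed into the same bound: when $|x_1|\le\sqrt\epsilon$, $\frac{\epsilon+|x_1|}{\epsilon}\le \frac{2\sqrt\epsilon}{\epsilon}\le \frac{C}{\epsilon+|x_1|^2}$ (since $\epsilon+|x_1|^2\le 2\epsilon$ and $\epsilon<1$), and when $\sqrt\epsilon<|x_1|\le R$, $\frac1{|x_1|}=\frac{|x_1|}{|x_1|^2}\le \frac{2R}{\epsilon+|x_1|^2}$. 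Combining,
$$
|\nabla v_i^\alpha(x)|\le \frac{C}{\epsilon+|x_1|^2}\le \frac{C}{\epsilon+\mathrm{dist}^2(x,\overline{P_1P_2})}, \qquad x\in\Omega_R.
$$

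\textbf{Step 3 (estimate outside $\Omega_R$).} Away from the neck I need only $|\nabla v_i^\alpha|\le C$, after which Step 1 converts this to the required form. Proposition \ref{prop1} yields $\|w_i^\alpha\|_{H^1(\widetilde\Omega)}\le C$, while \eqref{nablau_bar_outside} gives $\|\bar u_i^\alpha\|_{C^2}\le C$ on $\mathbb{R}^2\setminus\Omega_R$. Therefore $v_i^\alpha$ is $H^1$--bounded on $\widetilde\Omega\setminus\Omega_{R/2}$, solves the Lamé system with constant coefficients and smooth boundary data (it equals $\psi^\alpha$ on $\partial D_i$ and $0$ on the remaining boundary pieces, which are uniformly $C^{2,\gamma}$ and stay a fixed distance from $\overline{P_1P_2}$ since $|x_1|\ge R/2$ there). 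Standard interior and boundary Schauder/ADN estimates (as in \cite{adn1, adn}, used exactly as for $v_3$ in Lemma \ref{lem_v3_v1+v2}) then give $\|\nabla v_i^\alpha\|_{L^\infty(\widetilde\Omega\setminus\Omega_R)}\le C$.

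\textbf{Main obstacle.} The only nonroutine point is the interplay in Step 2: the bound on $\nabla\bar u_i^\alpha$ is a full $1/(\epsilon+|x_1|^2)$, whereas the $w_i^\alpha$ bound \eqref{nabla_w_ialpha} looks strictly better (it is $O(1/\sqrt\epsilon)$ near the neck rather than $O(1/\epsilon)$); one has to notice that this better bound is indeed dominated by $C/(\epsilon+|x_1|^2)$ so that the two contributions combine cleanly. Once that algebraic comparison is in hand, the rest is a matter of the geometric comparison in Step 1 and the ADN bound in Step 3.
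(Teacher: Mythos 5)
Your proposal is correct and follows essentially the same route as the paper: decompose $v_i^\alpha=\bar u_i^\alpha+w_i^\alpha$, combine the pointwise bound \eqref{nablau_bar} on $\nabla\bar u_i^\alpha$ with \eqref{nabla_w_ialpha} inside $\Omega_R$ (after the geometric comparison $\mathrm{dist}(x,\overline{P_1P_2})\sim|x_1|$, hence $\epsilon+\mathrm{dist}^2\sim\epsilon+|x_1|^2$), and use the energy bound \eqref{energy_w} plus classical ADN estimates to get $\|\nabla v_i^\alpha\|_{L^\infty(\widetilde\Omega\setminus\Omega_R)}\le C$. Your Step 1 and the explicit algebraic domination of the $w$-bound by $C/(\epsilon+|x_1|^2)$ merely spell out details the paper leaves implicit.
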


\begin{proof}[Proof of Corollary \ref{cor1}]
A consequence of \eqref{energy_w} is
$$\int_{\widetilde{\Omega}\setminus\Omega_{R/2}}\left|\nabla{v}_{i}^{\alpha}\right|^{2}dx
\leq2\int_{\widetilde{\Omega}\setminus\Omega_{R/2}}\left(\left|\nabla\bar{u}_{i}^{\alpha}\right|^{2}+\left|\nabla{w}_{i}^{\alpha}\right|^{2}\right)dx\leq\,C,$$
With this we can apply classical elliptic estimates to obtain
\begin{equation}\label{nablavialpha_outomega1/2}
\left\|\nabla{v}_{i}^{\alpha}\right\|_{L^{\infty}(\widetilde{\Omega}\setminus\Omega_{R})}\leq\,C,\quad\,i,\alpha=1,2.
\end{equation}
Under  assumption (\ref{omega}),
$$
\frac 1C(\epsilon+|x_1|^2)\le
\mathrm{dist}(x,\overline{P_{1}P_{2}})\le
C (\epsilon+|x_1|^2).
$$
Estimate \eqref{nablav1} in $\Omega_{R}$ follows from \eqref{nabla_w_ialpha} and the fact that
$$\left|\nabla\bar{u}_{i}^{\alpha}(x)\right|\leq\frac{C}{\epsilon+|x_{1}|^{2}},\quad\mbox{in}~~\Omega_{R}.$$
\end{proof}

\begin{proof}[Proof of Proposition \ref{prop1}] The iteration scheme we use in the proof is similar in spirit to that used in \cite{llby}. We only prove it for $i=\alpha=1$, since the same proof applies to the other cases. For simplicity, denote $w:=w_{1}^{1}$. We divide into three steps.

\noindent{\bf STEP 1.} Proof of \eqref{energy_w}.

By \eqref{def_w},
\begin{equation}\label{w20}
\begin{cases}
\mathcal{L}_{\lambda,\mu}w=-\mathcal{L}_{\lambda,\mu}\bar{u}_{1}^{1},&\mbox{in}~\widetilde{\Omega},\\
w=0,&\mbox{on}~\partial\widetilde{\Omega}.
\end{cases}
\end{equation}
Multiplying the equation in \eqref{w20} by $w$ and integrating by parts, we have
\begin{align}\label{integratingbyparts}
\int_{\widetilde{\Omega}}\left(\mathbb{C}^{0}e(w),e(w)\right)dx
=\int_{\widetilde{\Omega}}w\left(\mathcal{L}_{\lambda,\mu}\bar{u}_{1}^{1}\right)dx.
\end{align}
By the mean value theorem,
there exists $r_{0}\in(R/2,2R/3)$ such that
\begin{align}\label{fubini}
\int\limits_{\scriptstyle |x_{1}|=r_{0},\atop\scriptstyle
-\epsilon/2+h_{2}(x_{1})<x_{2}<\epsilon/2+h_{1}(x_{1})\hfill}|w|dx_{2}&
=\frac{6}{R}
\int\limits_{\scriptstyle R/2<|x_{1}|<2R/3,\atop\scriptstyle
-\epsilon/2+h_{2}(x_{1})<x_{2}<\epsilon/2+h_{1}(x_{1})\hfill}|w|dx\nonumber\\
&\leq\,C\int_{\Omega_{2R/3}\setminus\Omega_{R/2}}|\nabla
w|dx\nonumber\\
&\leq\,C\left(\int_{\widetilde{\Omega}}\left|\nabla{w}\right|^{2}dx\right)^{1/2}.
\end{align}
It follows from
\eqref{coeff2},  \eqref{integratingbyparts}  and
the First Korn inequality that
\begin{align}\label{energy_w_1}
&\int_{\widetilde{\Omega}}\left|\nabla{w}\right|^{2}dx\nonumber\\
&\leq\,2\int_{\widetilde{\Omega}}|e(w)|^{2}dx\nonumber\\
&\leq\,C\bigg|\int_{\Omega_{r_{0}}}w\left(\mathcal{L}_{\lambda,\mu}\bar{u}_{1}^{1}\right)dx\bigg|
+C\bigg|\int_{\widetilde{\Omega}\setminus\Omega_{r_{0}}}w\left(\mathcal{L}_{\lambda,\mu}\bar{u}_{1}^{1}\right)dx\bigg|\nonumber\\
&\leq\,C\bigg|\int_{\Omega_{r_{0}}}w\left(\mathcal{L}_{\lambda,\mu}\bar{u}_{1}^{1}\right)dx\bigg|
+C\int_{\widetilde{\Omega}\setminus\Omega_{r_{0}}}|w|dx\nonumber\\
&\leq\,C\left(\bigg|\int_{\Omega_{r_{0}}}w^{(1)}\partial_{x_{1}x_{1}}\bar{u}\,dx\bigg|
+\bigg|\int_{\Omega_{r_{0}}}w^{(2)}\partial_{x_{1}x_{2}}\bar{u}\,dx\bigg|\right)
+C\left(\int_{\widetilde{\Omega}\setminus\Omega_{r_{0}}}|\nabla{w}|^{2}dx\right)^{1/2}.
\end{align}

First,
\begin{align*}
\int_{\Omega_{r_{0}}}w^{(1)}\partial_{x_{1}x_{1}}\bar{u}\,dx&=
-\int_{\Omega_{r_{0}}}\partial_{x_{1}}w^{(1)}\partial_{x_{1}}\bar{u}\,dx+\int\limits_{\scriptstyle |x_{1}|=r_{0},\atop\scriptstyle
-\epsilon/2+h_{2}(x_{1})<x_{2}<\epsilon/2+h_{1}(x_{1})\hfill}\left(\partial_{x_{1}}\bar{u}\right)w^{(1)}\,dx_{2}\\
&:=I+II.
\end{align*}
Then, by \eqref{nablau_bar},
\begin{align*}
|I|&\leq\,C\left(\int_{\Omega_{r_{0}}}|\partial_{x_{1}}\bar{u}|^{2}dx\right)^{1/2}
\left(\int_{\widetilde{\Omega}}|\nabla{w}|^{2}dx\right)^{1/2}\leq\,C\left(\int_{\widetilde{\Omega}}|\nabla{w}|^{2}dx\right)^{1/2}.
\end{align*}
By \eqref{fubini}, we have
$$|II|\leq\,C\int\limits_{\scriptstyle |x_{1}|=r_{0},\atop\scriptstyle
-\epsilon/2+h_{2}(x_{1})<x_{2}<\epsilon/2+h_{1}(x_{1})\hfill}|w|\,dx_{2}\leq\,C\left(\int_{\widetilde{\Omega}}|\nabla{w}|^{2}dx\right)^{1/2}.$$
Hence
\begin{equation}\label{lem:energy_w1}
\bigg|\int_{\Omega_{r_{0}}}w^{(1)}\partial_{x_{1}x_{1}}\bar{u}\,dx\bigg|\leq\,C\left(\int_{\widetilde{\Omega}}|\nabla{w}|^{2}dx\right)^{1/2}.
\end{equation}
Similarly, using $w=0$ on $\partial{D}_{1}\cup\partial{D}_{2}$,
\begin{align*}
\bigg|\int_{\Omega_{r_{0}}}w^{(2)}\partial_{x_{1}x_{2}}\bar{u}\,dx\bigg|&=
\bigg|\int_{\Omega_{r_{0}}}\partial_{x_{2}}w^{(2)}\partial_{x_{1}}\bar{u}\,dx\bigg|\\
&\leq\,C\left(\int_{\Omega_{r_{0}}}|\partial_{x_{1}}\bar{u}|^{2}\,dx\right)^{1/2}
\left(\int_{\widetilde{\Omega}}|\nabla{w}|^{2}\,dx\right)^{1/2}\\
&\leq\,C\left(\int_{\widetilde{\Omega}}|\nabla{w}|^{2}\,dx\right)^{1/2}.
\end{align*}
Therefore, combining this estimate with \eqref{lem:energy_w1} and \eqref{energy_w_1},
$$\int_{\widetilde{\Omega}}|\nabla{w}|^{2}\,dx\leq\,C\left(\int_{\widetilde{\Omega}}|\nabla{w}|^{2}\,dx\right)^{1/2},$$
which implies \eqref{energy_w}.

\noindent{\bf STEP 2.} Proof of \eqref{energy_w_inomega_z1}.

For $0<t<s<R$, let $\eta$ be a smooth function satisfying $\eta(x_{1})=1$ if $|x_{1}-z_{1}|<t$, $\eta(x_{1})=0$ if $|x_{1}-z_{1}|>s$, $0\leq\eta(x_{1})\leq1$ if $t\leq|x_{1}-z_{1}|\leq\,s$, and $|\eta'(x_{1})|\leq\frac{2}{s-t}$. Multiplying the equation in \eqref{w20} by $w\eta^{2}$ and integrating by parts
lead  to
\begin{equation}\label{I3-1}
\int_{\widehat{\Omega}_{s}(z_{1})}(\mathbb{C}^{0}e(w),e(w\eta^{2}))dx
=-\int_{\widehat{\Omega}_{s}(z_{1})}(w\eta^{2})\mathcal{L}_{\lambda,\mu}\bar{u}_{1}^{1}\,dx.
\end{equation}
Using the First Korn inequality and some standard arguments, we have
\begin{align}\label{I3-2}
\int_{\widehat{\Omega}_{s}(z_{1})}(\mathbb{C}^{0}e(w),e(w\eta^{2}))dx
\geq\frac{1}{C}\int_{\widehat{\Omega}_{s}(z_{1})}|\nabla(w\eta)|^{2}dx
-C\int_{\widehat{\Omega}_{s}(z_{1})}|w|^{2}|\nabla\eta|^{2}dx,
\end{align}
and
$$
\bigg|\int_{\widehat{\Omega}_{s}(z_{1})}(w\eta^{2})\mathcal{L}_{\lambda,\mu}\bar{u}_{1}^{1}\,dx\bigg|
\leq\,\frac{C}{(s-t)^{2}}\int_{\widehat{\Omega}_{s}(z_{1})}|w|^{2}dx
+(s-t)^{2}\int_{\widehat{\Omega}_{s}(z_{1})}\left|\mathcal{L}_{\lambda,\mu}\bar{u}_{1}^{1}\right|^{2}dx.
$$
It follows that
\begin{align}\label{FsFt11}
\int_{\widehat{\Omega}_{t}(z_{1})}|\nabla{w}|^{2}dx\leq\,\frac{C}{(s-t)^{2}}\int_{\widehat{\Omega}_{s}(z_{1})}|w|^{2}dx
+(s-t)^{2}\int_{\widehat{\Omega}_{s}(z_{1})}\left|\mathcal{L}_{\lambda,\mu}\bar{u}_{1}^{1}\right|^{2}dx.
\end{align}

\noindent
{\bf Case 1. For $\sqrt{\epsilon}\leq|z_{1}|\leq\,R$.}

Note that for $0<s<\frac{2|z_{1}|}{3}$, we have
\begin{align}\label{energy_w_square}
\int_{\widehat{\Omega}_{s}(z_{1})}|w|^{2}dx
&=\int_{|x_{1}-z_{1}|\leq\,s}\int_{-\frac{\epsilon}{2}+h_{2}(x_{1})}^{\frac{\epsilon}{2}+h_{1}(x_{1})}
|w(x_{1},x_{2})|^{2}dx_{2}dx_{1}\nonumber\\
&\leq\int_{|x_{1}-z_{1}|\leq\,s}(\epsilon+h_{1}(x_{1})
-h_{2}(x_{1}))^{2}\int_{-\frac{\epsilon}{2}+h_{2}(x_{1})}^{\frac{\epsilon}{2}+h_{1}(x_{1})}
\left|\partial_{x_{2}}w(x_{1},x_{2})\right|^{2}dx_{2}dx_{1}\nonumber\\
&\leq\,C|z_{1}|^{4}\int_{\widehat{\Omega}_{s}(z_{1})}|\nabla{w}|^{2}dx,
\end{align}
By \eqref{L_ubar_ialpha}, we have
\begin{equation}\label{integal_Lubar11}
\int_{\widehat{\Omega}_{s}(z_{1})}\left|\mathcal{L}_{\lambda,\mu}\bar{u}_{1}^{1}\right|^{2}
dx\leq\frac{Cs}{|z_{1}|^{4}},\quad\,0<s<\frac{2|z_{1}|}{3}.
\end{equation}
Denote
$$\widehat{F}(t):=\int_{\widehat{\Omega}_{t}(z_{1})}|\nabla{w}|^{2}dx.$$
It follows from the above that
\begin{equation}\label{tildeF111}
\widehat{F}(t)\leq\,\left(\frac{C_{0}|z_{1}|^{2}}{s-t}\right)^{2}\widehat{F}(s)+C(s-t)^{2}\frac{s}{|z_{1}|^{4}},
\quad\forall~0<t<s<\frac{2|z_{1}|}{3},
\end{equation}
where $C_0$ is also a universal constant.

Let $t_{i}=2C_{0}i\,|z_{1}|^{2}$, $i=1,2,\cdots$. Then
$$\frac{C_{0}|z_{1}|^{2}}{t_{i+1}-t_{i}}=\frac{1}{2}.$$
Let $k=\left[\frac{1}{4C_{0}|z_{1}|}\right]$. Then by \eqref{tildeF111} with $s=t_{i+1}$ and $t=t_{i}$, we have
$$\widehat{F}(t_{i})\leq\,\frac{1}{4}\widehat{F}(t_{i+1})+\frac{C(t_{i+1}-t_{i})^{2}t_{i+1}}{|z_{1}|^{4}}
\leq\,\frac{1}{4}\widehat{F}(t_{i+1})+C(i+1)|z_{1}|^{2},
$$
After $k$ iterations, we have,
using (\ref{energy_w}),
\begin{eqnarray*}
\widehat{F}(t_{1}) &\leq &\left(\frac{1}{4}\right)^{k}\widehat{F}(t_{k+1})+C|z_{1}|^{2}\sum_{l=1}^{k}\left(\frac{1}{4}\right)^{l-1}(l+1)
\leq\,
C\left(\frac{1}{4}\right)^{k}
+C|z_{1}|^{2}\sum_{l=1}^{k}\left(\frac{1}{4}\right)^{l-1}(l+1)\\
&\leq &C|z_{1}|^{2}.
\end{eqnarray*}
This implies that
$$\int_{\widehat{\Omega}_{\delta}(z_{1})}|\nabla{w}|^{2}dx\leq\,C|z_{1}|^{2}.$$

\noindent
{\bf Case 2.} For $|z_{1}|\leq \sqrt{\epsilon}$.

 For $0<t<s<\sqrt{\epsilon}$, we still have \eqref{FsFt11}. Estimate \eqref{energy_w_square} becomes
 \begin{align}\label{energy_w_square_in}
\int_{\widehat{\Omega}_{s}(z_{1})}|w|^{2}dx
\leq\,C\epsilon^{2}\int_{\widehat{\Omega}_{s}(z_{1})}|\nabla{w}|^{2}dx,
\quad\,0<s<\sqrt{\epsilon}.
\end{align}
Estimate \eqref{integal_Lubar11} becomes
\begin{equation}\label{integal_Lubar11_in}
\int_{\widehat{\Omega}_{s}(z_{1})}
\left|\mathcal{L}_{\lambda,\mu}\bar{u}_{1}^{1}\right|^{2} dx
\leq\frac{Cs}{\epsilon}+\frac{C|z_{1}|^{2}s}{\epsilon^{3}}
+\frac {Cs^3}{ \epsilon^3},\quad\,0<s<\sqrt{\epsilon}.
\end{equation}
Estimate \eqref{tildeF111} becomes, in view of \eqref{FsFt11},
\begin{equation}\label{tildeF111_in}
\widehat{F}(t)\leq\,\left(\frac{C_{0}\epsilon}{s-t}\right)^{2}\widehat{F}(s)+C(s-t)^{2}s\left(\frac{1}{\epsilon}+\frac{|z_{1}|^{2}}{\epsilon^{3}}
+\frac {s^2}{ \epsilon^3}\right),
\quad\forall~0<t<s<\sqrt{\epsilon}.
\end{equation}
Let $t_{i}=2C_{0}i\epsilon$, $i=1,2,\cdots$. Then
$$\frac{C_{0}\epsilon}{t_{i+1}-t_{i}}=\frac{1}{2}.$$
Let $k=\left[\frac{1}{4C_{0}\sqrt{\epsilon}}\right]$. Then by \eqref{tildeF111_in} with $s=t_{i+1}$ and $t=t_{i}$, we have
$$\widehat{F}(t_{i})\leq\,\frac{1}{4}\widehat{F}(t_{i+1})
+Ci^3(\epsilon^{2}+|z_{1}|^{2}).$$
After $k$ iterations, we have,
using (\ref{energy_w}),
\begin{eqnarray*}
\widehat{F}(t_{1})
&\leq& \left(\frac{1}{4}\right)^{k}\widehat{F}(t_{k+1})
+C\sum_{l=1}^{k}\left(\frac{1}{4}\right)^{l-1}l^3(\epsilon^{2}+|z_{1}|^{2})
\leq\,
C\left(\frac{1}{4}\right)^{\frac{1}{C\sqrt{\epsilon}}}
+C(\epsilon^{2}+|z_{1}|^{2})\\
&\leq&\,C(\epsilon^{2}+|z_{1}|^{2}).
\end{eqnarray*}
This implies that
$$\int_{\widehat{\Omega}_{\delta}(z_{1})}|\nabla{w}|^{2}dx\leq\,C(\epsilon^{2}+|z_{1}|^{2}).$$

{\bf STEP 3.} Proof of \eqref{nabla_w_ialpha}.

Making a change of variables
\begin{equation}\label{changeofvariant}
 \left\{
  \begin{aligned}
  &x_1-z_{1}=\delta y_1,\\
  &x_2=\delta y_2,
  \end{aligned}
 \right.
\end{equation}
then $\widehat{\Omega}_{\delta}(z_{1})$ becomes $Q'_{1}$, where
$$Q'_{r}=\left\{y\in\mathbb{R}^{2}\bigg|-\frac{\epsilon}{2\delta}+\frac{1}{\delta}h_{2}(\delta{y}_{1}+z_{1})<y_{2}
<\frac{\epsilon}{2\delta}+\frac{1}{\delta}h_{1}(\delta{y}_{1}+z_{1}),|y_{1}|<r\right\},\quad\mbox{for}~~r\leq1,$$ and the boundaries $\Gamma^{\pm}_{1}$
become
$$
y_2=\hat{h}_{1}(y_{1}):=\frac{1}{\delta}
\left(\frac{\epsilon}{2}+h_{1}(\delta\,y_{1}+z_{1})\right),\quad|y_{1}|<1,$$
and
$$y_2=\hat{h}_{2}(y_{1}):=\frac{1}{\delta}\left(-\frac{\epsilon}{2}
+h_{2}(\delta\,y_{1}+z_{1})\right), \quad |y_1|<1.
$$
 Then
$$\hat{h}_{1}(0)-\hat{h}_{2}(0):=\frac{1}{\delta}\left(\epsilon+h_{1}(z_{1})-h_{2}(z_{1})\right)=2,$$
and by \eqref{h1h20} and \eqref{h1h22},
$$|\hat{h}_{1}'(0)|+|\hat{h}_{2}'(0)|\leq\,C|z_{1}|,
\quad|\hat{h}_{1}''(0)|+|\hat{h}_{2}''(0)|\leq\,C\delta.$$
Since $R$ is small, $\|\hat{h}_{1}\|_{C^{1,1}((-1,1))}$ and $\|\hat{h}_{2}\|_{C^{1,1}((-1,1))}$ are small and $\frac{1}{2}Q'_{1}$ is essentially a unit square as far as applications of Sobolev embedding
theorems and classical $L^{p}$ estimates for elliptic systems are concerned.
Let
\begin{equation}\label{def_U}
U_{1}^{1}(y_1, y_2):=\bar{u}_{1}^{1}(x_{1},x_{2}),\quad\,W_{1}^{1}(y_1, y_2):=w_{1}^{1}(x_{1},x_{2}),
\quad\,y\in{Q}'_{1},
\end{equation}
then by \eqref{w20},
\begin{align}
\mathcal{L}_{\lambda,\mu}W_{1}^{1}
=-\mathcal{L}_{\lambda,\mu}U_{1}^{1},
\quad\quad\,y\in{Q'_{1}}.
\end{align}
where
$$\left|\mathcal{L}_{\lambda,\mu}U_{1}^{1}\right|=\delta^{2}\left|\mathcal{L}_{\lambda,\mu}\bar{u}_{1}^{1}\right|.$$
Since $W_1^1=0$ on the top and
bottom boundaries of $Q'_{1}$, we have, using
 Poincar\'{e} inequality,
that
$$\left\|W_{1}^{1}\right\|_{H^{1}(Q'_{1})}\leq\,C\left\|\nabla{W}_{1}^{1}\right\|_{L^{2}(Q'_{1})}.$$
By
 $W^{2,p}$ estimates for elliptic systems
(see \cite{adn}) and
Sobolev embedding theorems,
we have, with $p=3$,
\begin{align*}
\left\|\nabla{W}_{1}^{1}\right\|_{L^{\infty}(Q'_{1/2})}\leq\,
C\left\|W_{1}^{1}\right\|_{W^{2,p}(Q'_{1/2})}
\leq\,C\left(\left\|\nabla{W}_{1}^{1}\right\|_{L^{2}(Q'_{1})}+\left\|\mathcal{L}_{\lambda,\mu}U_{1}^{1}\right\|_{L^{\infty}(Q'_{1})}\right).
\end{align*}
It follows that
\begin{equation}
\left\|\nabla{w}_{1}^{1}\right\|_{L^{\infty}(\widehat{\Omega}_{\frac{\delta}{2}}(z_{1}))}\leq\,
\frac{C}{\delta}\left(\left\|\nabla{w}_{1}^{1}\right\|_{L^{2}(\widehat{\Omega}_{\delta}(z_{1}))}
+\delta^{2}\left\|\mathcal{L}_{\lambda,\mu}\bar{u}_{1}^{1}\right\|_{L^{\infty}(\widehat{\Omega}_{\delta}(z_{1}))}\right).
\label{AAA}
\end{equation}

\noindent
{\bf Case 1.}\   For $\sqrt{\epsilon}\leq|z_{1}|\leq\,R$.

By (\ref{energy_w_inomega_z1}),
$$\int_{\widehat{\Omega}_{\delta}(z_{1})}\left|\nabla{w}_{1}^{1}\right|^{2}dx\leq\,C|z_{1}|^{2}.$$
By \eqref{L_ubar_ialpha},
$$\delta^{2}\left|\mathcal{L}_{\lambda,\mu}\bar{u}_{1}^{1}\right|\leq\delta^{2}\left(\frac{C}{|z_{1}|^{2}}+\frac{C}{|z_{1}|^{3}}\right)\leq\,C|z_{1}|,
\qquad \mbox{in}\ \widehat\Omega_\delta(z_1).$$
We deduce from (\ref{AAA}) that
$$\left|\nabla{w}_{1}^{1}(z_{1},x_{2})\right|=\frac{C|z_{1}|}{\delta}\leq\frac{C}{|z_{1}|},
\qquad\forall \
-\frac \epsilon 2 +h_2(z_1)<
x_2< \frac \epsilon 2 +h_1(z_1).$$

\noindent
{\bf Case 2.}\  For $|z_{1}|\leq\sqrt{\epsilon}$.

By (\ref{energy_w_inomega_z1}),
$$\int_{\widehat{\Omega}_{\delta}(z_{1})}\left|\nabla{w}_{1}^{1}\right|^{2}dx\leq\,C(\epsilon^{2}+|z_{1}|^{2}).$$
By \eqref{L_ubar_ialpha},
$$\delta^{2}\left|\mathcal{L}_{\lambda,\mu}\bar{u}_{1}^{1}\right|\leq
C \delta^{2}\left(\frac{1}{\epsilon}
+\frac{\epsilon+|z_1|}{\epsilon^2}\right)\leq\,C
(\epsilon+|z_1|), \qquad
\mbox{in}\
\widehat \Omega_\delta(z_1).$$
We deduce from (\ref{AAA}) that
$$\left|\nabla{w}_{1}^{1}(z_{1},x_{2})\right|=\frac{C}{\delta}
\left(\epsilon+|z_{1}|\right)\leq
C \frac{\epsilon +|z_1|}{\epsilon},
\qquad\forall \
-\frac \epsilon 2 +h_2(z_1)<
x_2< \frac \epsilon 2 +h_1(z_1).$$
Proposition \ref{prop1} is established.
\end{proof}

\subsection{Estimates of $v_{i}^{3}$, $i=1,2$}

Define
\begin{equation}\label{def:ubar1323}
\bar{u}_{1}^{3}=\left(x_{2}\bar{u},-x_{1}\bar{u}\right)^{T},\quad\mbox{and}
\quad\,\bar{u}_{2}^{3}=\left(x_{2}\underline{u},-x_{1}\underline{u}\right)^{T}
\end{equation}
then $v_{i}^{3}=\bar{u}_{i}^{3}$ on $\partial\widetilde{\Omega}$, $i=1,2$.
Using  \eqref{nablau_bar},
\eqref{h1h20}
and \eqref{h1h2}, we obtain
\begin{equation}\label{nabla_baru13_in}
\left|\nabla\bar{u}_{i}^{3}(x)\right|\leq\,
\frac{C(\epsilon+|x_1|)}
{\epsilon+|x_{1}|^{2}},\quad\,i=1,2,\quad\,x\in\Omega_{R},
\end{equation}
and
\begin{equation}\label{nabla_baru13_out}
\left|\nabla\bar{u}_{i}^{3}(x)\right|\leq\,C,\quad\,i=1,2,\quad\,x\in\widetilde{\Omega}\setminus\Omega_{R}.
\end{equation}
It follows from \eqref{def:ubar1323},
\eqref{L_u}, \eqref{nablau_bar} and \eqref{nabla2u_bar} that
\begin{equation}\label{L_ubar_i3}
\left|\mathcal{L}_{\lambda,\mu}\bar{u}_{i}^{3}\right|\leq\frac{C}{\epsilon+|x_{1}|^{2}},\quad i=1,2,
\quad\,x\in\Omega_{R}.
\end{equation}

We estimate the energy of $v_{i}^{3}$, $i=1,2$.

\begin{lemma}\label{lemvi3}
\begin{equation}\label{lem31.02}
\int_{\widetilde{\Omega}}\left|{v}_{i}^{3}\right|^{2}dx+
\int_{\widetilde{\Omega}}
\left|\nabla{v}_{i}^{3}\right|^{2}dx\leq\,C,\quad~i=1,2,
\end{equation}
and
\begin{equation}\label{nablavi3_outomega1/2}
\left\|\nabla{v}_{i}^{3}\right\|_{L^{\infty}(\widetilde{\Omega}\setminus\Omega_{R})}\leq\,C,\quad\,i=1,2.
\end{equation}
\end{lemma}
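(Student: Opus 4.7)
\emph{Plan.} I will imitate the strategy of Lemma \ref{lem_v3_v1+v2}: use the explicit extension $\bar u_i^3$ defined in \eqref{def:ubar1323} as a comparison function, bound its Dirichlet energy uniformly in $\epsilon$, transfer the bound to $v_i^3$ via the variational characterization of the Dirichlet problem, apply the First Korn inequality to the difference, and finally upgrade to the $L^\infty$ estimate on $\widetilde\Omega\setminus\Omega_R$ by standard elliptic regularity.

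\emph{Step 1: energy of the extension.} The central computation is to show
$$\int_{\widetilde\Omega}|\nabla\bar u_i^3|^2\,dx\le C.$$
Outside $\Omega_R$ this follows from \eqref{nabla_baru13_out}. Inside $\Omega_R$, using \eqref{nabla_baru13_in} and the fact that the vertical width of $\Omega_R$ at horizontal coordinate $x_1$ is comparable to $\epsilon+x_1^2$, the integral reduces to estimating
$$\int_{-R}^{R}(\epsilon+x_1^2)\,\frac{(\epsilon+|x_1|)^2}{(\epsilon+x_1^2)^2}\,dx_1=\int_{-R}^{R}\frac{(\epsilon+|x_1|)^2}{\epsilon+x_1^2}\,dx_1.$$
Splitting into $|x_1|\le\sqrt\epsilon$ and $|x_1|>\sqrt\epsilon$, the integrand is bounded by a universal constant in both regimes, so the integral is $O(R)=O(1)$. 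The critical point is that the extra factor $x_2$ or $x_1$ in $\bar u_i^3$ — i.e.\ the vanishing of $\psi^3$ at the pinch point — exactly cancels the would-be $\epsilon^{-1}$ growth of $|\nabla\bar u|$.

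\emph{Step 2: transfer to $v_i^3$.} Since $v_i^3-\bar u_i^3\in H^1_0(\widetilde\Omega;\mathbb R^2)$ and $v_i^3$ minimizes $I_\infty[\,\cdot\,]$ among $H^1(\widetilde\Omega;\mathbb R^2)$ functions with its boundary trace, we have $I_\infty[v_i^3]\le I_\infty[\bar u_i^3]\le C$ by Step 1 and \eqref{coeff2}. The First Korn inequality (Lemma \ref{lemmaA}) applied to $v_i^3-\bar u_i^3$, combined with \eqref{coeff2}, yields
$$\|\nabla(v_i^3-\bar u_i^3)\|_{L^2(\widetilde\Omega)}^2\le 2\|e(v_i^3-\bar u_i^3)\|_{L^2(\widetilde\Omega)}^2\le C\bigl(I_\infty[v_i^3]+I_\infty[\bar u_i^3]\bigr)\le C,$$
whence $\|\nabla v_i^3\|_{L^2(\widetilde\Omega)}\le C$ by the triangle inequality and Step 1. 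For the $L^2$ bound on $v_i^3$ itself, Poincar\'e applied to $v_i^3-\bar u_i^3\in H^1_0$ together with the trivial bound $\|\bar u_i^3\|_{L^2(\widetilde\Omega)}\le C$ (since $|\bar u|\le 1$ and $\widetilde\Omega$ is bounded) gives $\|v_i^3\|_{L^2(\widetilde\Omega)}\le C$, proving \eqref{lem31.02}.

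\emph{Step 3: $L^\infty$ bound of the gradient away from the neck.} On any subdomain of $\widetilde\Omega\setminus\Omega_{R/2}$, the boundary portions involved (the parts of $\partial D_1$, $\partial D_2$ a positive distance from $P_1,P_2$, together with $\partial\Omega$) are $C^{2,\gamma}$ and the boundary data are either the smooth $\psi^3$ or zero. The system $\mathcal L_{\lambda,\mu}v_i^3=0$ has constant coefficients satisfying \eqref{coeff4_strongelyconvex}, so the interior and boundary Schauder / $W^{2,p}$ estimates of Agmon--Douglis--Nirenberg \cite{adn1,adn}, together with Sobolev embedding, promote the $L^2$ bound on $\nabla v_i^3$ from Step 2 to the $L^\infty$ bound \eqref{nablavi3_outomega1/2}.

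\emph{Anticipated difficulty.} The only nontrivial point is Step 1: one must verify that, despite the $1/(\epsilon+x_1^2)$ blow up of $\partial_{x_2}\bar u$, the multiplicative factors $x_2,-x_1$ in $\bar u_i^3$ reduce the singular contribution enough that $\int|\nabla\bar u_i^3|^2$ remains bounded uniformly in $\epsilon$. Once this is established, the remainder of the argument is a direct adaptation of the proof of Lemma \ref{lem_v3_v1+v2}.
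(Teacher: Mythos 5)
Your proposal is correct and follows essentially the same route as the paper: compare $v_i^3$ with the extension $\bar u_i^3$, whose Dirichlet energy is bounded uniformly in $\epsilon$ by \eqref{nabla_baru13_in}--\eqref{nabla_baru13_out} (your explicit computation of $\int_{\Omega_R}|\nabla\bar u_i^3|^2$ is exactly the cancellation the paper uses implicitly), then use energy minimality, the First Korn inequality and Poincar\'e for \eqref{lem31.02}, and Agmon--Douglis--Nirenberg estimates away from the neck for \eqref{nablavi3_outomega1/2}. The only deviations (applying Poincar\'e to $v_i^3-\bar u_i^3\in H^1_0$ rather than to $v_i^3$ directly) are immaterial.
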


\begin{proof}
By \eqref{nabla_baru13_in} and \eqref{nabla_baru13_out}, we have
$$I_{\infty}[v_{i}^{3}]\leq\,I_{\infty}[\bar{u}_{i}^{3}]\leq\,
C\left\|\nabla\bar{u}_{i}^{3}\right\|_{L^{2}(\widetilde{\Omega})}^2\leq\,C,$$
and,  by (\ref{coeff4_strongelyconvex})
and (\ref{coeff2}) and  the First Korn inequality,
\begin{align*}
\left\|\nabla{v}_{i}^{3}\right\|_{L^{2}(\widetilde{\Omega})}&
\leq\left\|\nabla(v_{i}^{3}-\bar{u}_{i}^{3})\right\|_{L^{2}(\widetilde{\Omega})}+\left\|\nabla\bar{u}_{i}^{3}\right\|_{L^{2}(\widetilde{\Omega})}
\leq\sqrt{2}\left\|e(v_{i}^{3}-\bar{u}_{i}^{3})\right\|_{L^{2}(\widetilde{\Omega})}+C\\
&\leq\,C\left\|e(v_{i}^{3})\right\|_{L^{2}(\widetilde{\Omega})}+C\leq\,CI_{\infty}[v_{i}^{3}]+C
\leq\,C.
\end{align*}
We know from the Poincar\'{e} inequality that
$$\int_{\widetilde{\Omega}}\left|{v}_{i}^{3}\right|^{2}dx\leq\,C
\int_{\widetilde{\Omega}}\left|\nabla{v}_{i}^{3}\right|^{2}dx\leq\,C.$$
Note that the above constant $C$ is independent of $\epsilon$.

With \eqref{lem31.02},
 we can apply classical elliptic estimates, see
\cite{adn1} and \cite{adn}, to obtain
 \eqref{nablavi3_outomega1/2}.
\end{proof}

Denote
$$w_{i}^{3}:={v}_{i}^{3}-\bar{u}_{i}^{3},\quad\,i=1,2.$$
It is easy to see from \eqref{nabla_baru13_in}, \eqref{nabla_baru13_out}
and \eqref{lem31.02} that
\begin{equation} \label{XYZ}
\int_{\widetilde \Omega}
\left|\nabla w_i^3\right|^2\le C.
\end{equation}

\begin{lemma}\label{lem_I11}
With $\delta=\delta(z_{1})$ in \eqref{delta}, we have,
 for $i=1,2$,
\begin{equation}\label{energyw13_out}
\int_{\widehat{\Omega}_{\delta}(z_{1})}\left|\nabla{w}_{i}^{3}\right|^{2}dx\leq
\begin{cases}
C\epsilon^{2},& |z_{1}|<\sqrt{\epsilon},\\
C|z_{1}|^{4},&\sqrt{\epsilon}\leq|z_{1}|<R.
\end{cases}
\end{equation}
\end{lemma}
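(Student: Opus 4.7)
The plan is to follow the Caccioppoli/iteration scheme used in STEP 2 of the proof of Proposition \ref{prop1}, but with the new right-hand side governed by estimate \eqref{L_ubar_i3} for $\mathcal{L}_{\lambda,\mu}\bar{u}_{i}^{3}$. Because \eqref{L_ubar_i3} is strictly less singular than the bound \eqref{L_ubar_ialpha} used for $\bar{u}_{1}^{1}$, we expect the resulting local energy bounds on $w_{i}^{3}$ to be correspondingly better, which accounts for the extra factor $|z_{1}|^{2}$ in the right-hand sides of \eqref{energyw13_out}.

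Concretely, note that $w_{i}^{3}=v_{i}^{3}-\bar{u}_{i}^{3}$ vanishes on $\partial\widetilde{\Omega}$ and satisfies $\mathcal{L}_{\lambda,\mu}w_{i}^{3}=-\mathcal{L}_{\lambda,\mu}\bar{u}_{i}^{3}$ in $\widetilde{\Omega}$. For $0<t<s<R$ and $|z_{1}|\le R/2$, choose a cutoff $\eta(x_{1})$ equal to $1$ on $|x_{1}-z_{1}|<t$, supported in $|x_{1}-z_{1}|<s$, with $|\eta'|\le 2/(s-t)$. Test the equation with $w_{i}^{3}\eta^{2}$, apply the ellipticity \eqref{coeff2} together with the First Korn inequality (Lemma \ref{lemmaA}) exactly as in \eqref{I3-2}, and absorb to obtain the Caccioppoli-type bound
\begin{equation*}
\int_{\widehat{\Omega}_{t}(z_{1})}|\nabla w_{i}^{3}|^{2}\,dx
\le \frac{C}{(s-t)^{2}}\int_{\widehat{\Omega}_{s}(z_{1})}|w_{i}^{3}|^{2}\,dx
+ (s-t)^{2}\int_{\widehat{\Omega}_{s}(z_{1})}|\mathcal{L}_{\lambda,\mu}\bar{u}_{i}^{3}|^{2}\,dx .
\end{equation*}
Since $w_{i}^{3}=0$ on $\Gamma_{s}^{\pm}$, the one-dimensional Poincar\'{e} inequality in the $x_{2}$ direction over slices of height $\epsilon+h_{1}(x_{1})-h_{2}(x_{1})\sim\delta(z_{1})$ (for $|x_{1}-z_{1}|\le s$ with $s\le 2|z_{1}|/3$ in Case~1, or $s\le\sqrt{\epsilon}$ in Case~2) gives
\begin{equation*}
\int_{\widehat{\Omega}_{s}(z_{1})}|w_{i}^{3}|^{2}\,dx \le C\delta^{2}\int_{\widehat{\Omega}_{s}(z_{1})}|\nabla w_{i}^{3}|^{2}\,dx .
\end{equation*}
Together with \eqref{L_ubar_i3}, which when integrated over $\widehat{\Omega}_{s}(z_{1})$ yields $\int_{\widehat{\Omega}_{s}(z_{1})}|\mathcal{L}_{\lambda,\mu}\bar{u}_{i}^{3}|^{2}\,dx\le Cs/|z_{1}|^{2}$ in Case~1 and $\le Cs/\epsilon$ in Case~2, setting $\widehat{F}(t):=\int_{\widehat{\Omega}_{t}(z_{1})}|\nabla w_{i}^{3}|^{2}\,dx$ gives the iteration inequalities
\begin{equation*}
\widehat{F}(t)\le\Big(\tfrac{C_{0}|z_{1}|^{2}}{s-t}\Big)^{2}\widehat{F}(s)+C(s-t)^{2}\tfrac{s}{|z_{1}|^{2}}\quad(\text{Case 1}),\qquad
\widehat{F}(t)\le\Big(\tfrac{C_{0}\epsilon}{s-t}\Big)^{2}\widehat{F}(s)+C(s-t)^{2}\tfrac{s}{\epsilon}\quad(\text{Case 2}).
\end{equation*}

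Finally, I iterate as in STEP 2. In Case~1 take $t_{i}=2C_{0}i|z_{1}|^{2}$ and $k\sim 1/|z_{1}|$; each step gives an increment of size $C(i+1)|z_{1}|^{4}$, and the global bound \eqref{XYZ} controls $\widehat{F}(t_{k+1})$, yielding $\widehat{F}(t_{1})\le C|z_{1}|^{4}$. In Case~2 take $t_{i}=2C_{0}i\epsilon$ and $k\sim 1/\sqrt{\epsilon}$; each step contributes $C(i+1)\epsilon^{2}$, and the geometric decay $(1/4)^{k}$ together with \eqref{XYZ} gives $\widehat{F}(t_{1})\le C\epsilon^{2}$. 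Since $\widehat{\Omega}_{\delta}(z_{1})\subset\widehat{\Omega}_{t_{1}}(z_{1})$ in both cases, this yields \eqref{energyw13_out}. The main technical point (already handled cleanly in STEP 2) is the careful iteration that converts the global bound \eqref{XYZ} into the sharper local bounds; everything else is the direct analogue of the calculations for $w_{1}^{1}$, with the improved singularity of \eqref{L_ubar_i3} responsible for the stronger decay.
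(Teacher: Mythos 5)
Your proposal is correct and follows essentially the same argument as the paper: the same Caccioppoli inequality tested with $w_{i}^{3}\eta^{2}$, the same slice Poincar\'e bound of order $\delta^{2}$, the same right-hand side estimates $Cs/|z_{1}|^{2}$ and $Cs/\epsilon$ coming from \eqref{L_ubar_i3}, and the same iteration with $t_{i}=2C_{0}i|z_{1}|^{2}$ (resp.\ $2C_{0}i\epsilon$) terminated using the global bound \eqref{XYZ}. No gaps to report.
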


\begin{proof}
The proof is similar to that of \eqref{energy_w_inomega_z1}.
We will only prove it for $i=1$, since the proof for $i=2$ is the same. For simplicity, denote $w:=w_{1}^{3}$, then
\begin{equation}\label{w2013}
\begin{cases}
\mathcal{L}_{\lambda,\mu}w=-\mathcal{L}_{\lambda,\mu}\bar{u}_{1}^{3},&\mbox{in}~\widetilde{\Omega},\\
w=0,&\mbox{on}~\partial\widetilde{\Omega}.
\end{cases}
\end{equation}
As in the proof of \eqref{energy_w_inomega_z1}, we have, instead of \eqref{FsFt11},
\begin{align}\label{FsFt13}
\int_{\widehat{\Omega}_{t}(z_{1})}|\nabla{w}|^{2}dx\leq\,\frac{C}{(s-t)^{2}}\int_{\widehat{\Omega}_{s}(z_{1})}|w|^{2}dx
+(s-t)^{2}\int_{\widehat{\Omega}_{s}(z_{1})}\left|\mathcal{L}_{\lambda,\mu}\bar{u}_{1}^{3}\right|^{2}dx.
\end{align}

\noindent
{\bf Case 1. $\sqrt{\epsilon}<|z_{1}|<R$.}

We still have \eqref{energy_w_square} for $0<s<\frac{2|z_{1}|}{3}$. Instead of \eqref{integal_Lubar11}, we have, using
 \eqref{L_ubar_i3},
\begin{align}\label{Lu13}
\int_{\widehat{\Omega}_{s}(z_{1})}\left|\mathcal{L}_{\lambda,\mu}\bar{u}_{1}^{3}\right|^{2}dx\leq\frac{Cs}{|z_{1}|^{2}}.
\end{align}
Instead of \eqref{tildeF111}, we have
\begin{equation}\label{tildeF13}
\widehat{F}(t)\leq\,\left(\frac{C_{0}|z_{1}|^{2}}{s-t}\right)^{2}\widehat{F}(s)+C(s-t)^{2}\frac{s}{|z_{1}|^{2}},
\quad\forall~0<t<s<\frac{2|z_{1}|}{3}.
\end{equation}
We define $\{t_{i}\}$, $k$ and iterate as in the proof of \eqref{energy_w_inomega_z1}, right below formula \eqref{tildeF111}, to obtain,
using \eqref{XYZ},
$$\widehat{F}(t_{1})
\leq\,\left(\frac{1}{4}\right)^{k}\widehat{F}(\frac{2|z_{1}|}{3})+C|z_{1}|^{4}\sum_{l=1}^{k}\left(\frac{1}{4}\right)^{l}l\leq\,C|z_{1}|^{4}.$$
This implies that
$$\int_{\widehat{\Omega}_{\delta}(z_{1})}|\nabla{w}|^{2}dx\leq\,C|z_{1}|^{4}.$$

\noindent
{\bf Case 2. $|z_{1}|<\sqrt{\epsilon}$.}

Estimate \eqref{energy_w_square_in} remains the same. Estimate \eqref{integal_Lubar11_in} becomes
\begin{align}\label{integal_Lubar13_in}
\int_{\widehat{\Omega}_{s}(z_{1})}\left|\mathcal{L}_{\lambda,\mu}\bar{u}_{1}^{3}\right|^{2}dx
\leq\frac{Cs}{\epsilon},\quad\,0<s<\sqrt{\epsilon}.
\end{align}
Estimate \eqref{tildeF111_in} becomes
\begin{equation}\label{tildeF2}
\widehat{F}(t)\leq\,\left(\frac{C_{0}\epsilon}{s-t}\right)^{2}\widehat{F}(s)+\frac{C(s-t)^{2}s}{\epsilon},
\quad\forall~0<t<s<\sqrt{\epsilon}.
\end{equation}
Define $\{t_{i}\}$, $k$ and iterate as in the proof of \eqref{energy_w_inomega_z1}, right below formula \eqref{tildeF111_in}, to obtain
$$\widehat{F}(t_{1})\leq\,\left(\frac{1}{4}\right)^{k}\widehat{F}(t_{k+1})
+C\sum_{l=1}^{k}\left(\frac{1}{4}\right)^{l-1}l\epsilon^{2}
\leq\,C\epsilon^{2}.$$
This implies that
$$\int_{\widehat{\Omega}_{\delta}(z_{1})}|\nabla{w}|^{2}dx\leq\,C\epsilon^{2}.$$
\end{proof}

\begin{lemma}\label{lemma3.6}
\begin{equation}\label{nabla_w13}
\left\|\nabla{w}_{i}^{3}\right\|_{L^{\infty}(\widetilde{\Omega})}\leq\,C,\quad\,i=1,2.
\end{equation}
Consequently,
\begin{equation}\label{nabla_v13}
\left|\nabla{v}_{i}^{3}(x)\right|\leq\,\frac{C(\epsilon+|x_{1}|)}{\epsilon+|x_{1}|^{2}},\quad\,i=1,2,\quad\,x\in{\Omega_R}.
\end{equation}
\end{lemma}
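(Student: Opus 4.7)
The plan is to mimic the rescaling argument used in Step 3 of the proof of Proposition~\ref{prop1}, but now applied to $w_i^3$, and then combine the resulting $L^\infty$ bound on $\nabla w_i^3$ with the pointwise estimate \eqref{nabla_baru13_in} on $\nabla\bar u_i^3$. Throughout I treat only $i=1$ and write $w=w_1^3$; the case $i=2$ is identical. Since $w$ satisfies \eqref{w2013} and $w=0$ on the top and bottom of $\widehat\Omega_\delta(z_1)$, I can pull the problem to an essentially unit-sized domain.

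First, for $|z_1|\le R/2$, perform the change of variables \eqref{changeofvariant} with $\delta=\delta(z_1)$, so that $\widehat\Omega_\delta(z_1)$ becomes a domain $Q_1'$ whose rescaled top and bottom boundaries are uniformly $C^{1,1}$. Setting $W(y)=w(x)$ and $U(y)=\bar u_1^3(x)$, the rescaled system reads $\mathcal{L}_{\lambda,\mu}W=-\mathcal{L}_{\lambda,\mu}U$ in $Q_1'$ with $|\mathcal L_{\lambda,\mu}U|=\delta^2|\mathcal L_{\lambda,\mu}\bar u_1^3|$, and $W=0$ on the top and bottom of $Q_1'$. Poincar\'e's inequality, $W^{2,p}$-estimates of Agmon--Douglis--Nirenberg (with $p=3$), and Sobolev embedding together give
\begin{equation*}
\|\nabla W\|_{L^\infty(Q'_{1/2})}\le C\bigl(\|\nabla W\|_{L^2(Q'_1)}+\|\mathcal L_{\lambda,\mu}U\|_{L^\infty(Q'_1)}\bigr),
\end{equation*}
which upon rescaling yields
\begin{equation}\label{kkey}
\|\nabla w\|_{L^\infty(\widehat\Omega_{\delta/2}(z_1))}\le \frac{C}{\delta}\Bigl(\|\nabla w\|_{L^2(\widehat\Omega_\delta(z_1))}+\delta^2\|\mathcal L_{\lambda,\mu}\bar u_1^3\|_{L^\infty(\widehat\Omega_\delta(z_1))}\Bigr).
\end{equation}

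Next I plug in the available estimates. For $\sqrt\epsilon\le|z_1|<R$, Lemma~\ref{lem_I11} gives $\|\nabla w\|_{L^2(\widehat\Omega_\delta(z_1))}^2\le C|z_1|^4$, \eqref{L_ubar_i3} gives $|\mathcal L_{\lambda,\mu}\bar u_1^3|\le C/|z_1|^2$ on $\widehat\Omega_\delta(z_1)$, and $\delta\sim|z_1|^2$; hence \eqref{kkey} gives $\|\nabla w\|_{L^\infty(\widehat\Omega_{\delta/2}(z_1))}\le C$. For $|z_1|<\sqrt\epsilon$, Lemma~\ref{lem_I11} gives $\|\nabla w\|_{L^2(\widehat\Omega_\delta(z_1))}^2\le C\epsilon^2$, \eqref{L_ubar_i3} gives $|\mathcal L_{\lambda,\mu}\bar u_1^3|\le C/\epsilon$, and $\delta\sim\epsilon$; so \eqref{kkey} again produces $\|\nabla w\|_{L^\infty(\widehat\Omega_{\delta/2}(z_1))}\le C$. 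Letting $z_1$ range over $|z_1|\le R/2$ covers a neighbourhood of $\overline{P_1P_2}$, and on the complement $\widetilde\Omega\setminus\Omega_R$ the bound $\|\nabla w_i^3\|_{L^\infty}\le C$ follows directly from \eqref{XYZ}, \eqref{nablavi3_outomega1/2} and \eqref{nabla_baru13_out} combined with classical elliptic estimates. This proves \eqref{nabla_w13}.

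Finally, \eqref{nabla_v13} comes from writing $\nabla v_i^3=\nabla w_i^3+\nabla\bar u_i^3$, using \eqref{nabla_w13} and \eqref{nabla_baru13_in}, and noting that in $\Omega_R$ one has $\frac{\epsilon+|x_1|}{\epsilon+|x_1|^2}\ge 1$ (since $|x_1|\le R<1$), so the uniform constant is absorbed into the singular factor. The main obstacle in this plan is ensuring that the rescaled $W^{2,p}$ constant is independent of $z_1$ and $\epsilon$; this is where smallness of $\|\hat h_1\|_{C^{1,1}}+\|\hat h_2\|_{C^{1,1}}$ on $Q_1'$ (inherited from the $C^{2,\gamma}$ regularity of $\partial D_i$ and the normalization $\hat h_1(0)-\hat h_2(0)=2$) is essential, exactly as noted in the proof of Proposition~\ref{prop1}.
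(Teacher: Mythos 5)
Your proposal is correct and takes essentially the same approach as the paper: the paper's proof of this lemma simply reruns the rescaling/$W^{2,p}$ argument behind \eqref{nabla_w_ialpha} (i.e.\ the analogue of \eqref{AAA}) with the inputs $\int_{\widehat{\Omega}_{\delta}(z_{1})}|\nabla w_{1}^{3}|^{2}\le C|z_{1}|^{4}$ (resp.\ $C\epsilon^{2}$) from Lemma \ref{lem_I11} and $\delta^{2}|\mathcal{L}_{\lambda,\mu}\bar{u}_{1}^{3}|\le C|z_{1}|^{2}$ (resp.\ $C\epsilon$) from \eqref{L_ubar_i3}, exactly as you do, and then concludes \eqref{nabla_v13} via $\nabla v_{i}^{3}=\nabla w_{i}^{3}+\nabla\bar{u}_{i}^{3}$ and \eqref{nabla_baru13_in}. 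The only cosmetic difference is that the paper lets $z_{1}$ range over $|z_{1}|\le R$ while you stop at $R/2$; to cover the intermediate strip you should either do the same or take the exterior bound on $\widetilde{\Omega}\setminus\Omega_{R/2}$, which changes nothing of substance.
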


\begin{proof}
The proof is the same as that of \eqref{nabla_w_ialpha}.
In Case 1, $\sqrt{\epsilon}\leq|z_{1}|\leq\,R$, we use estimates
$$\int_{\widehat{\Omega}_{\delta}(z_{1})}\left|\nabla{w}_{1}^{3}\right|^{2}dx\leq\,C|z_{1}|^{4},$$
and
$$\delta^{2}\left|\mathcal{L}_{\lambda,\mu}\bar{u}_{1}^{3}\right|\leq\,C|z_{1}|^{2}.$$

In Case 2, $|z_{1}|\leq\sqrt{\epsilon}$. we use
$$\int_{\widehat{\Omega}_{\delta}(z_{1})}\left|\nabla{w}_{1}^{3}\right|^{2}dx\leq\,C\epsilon^{2},$$
and
$$\delta^{2}\left|\mathcal{L}_{\lambda,\mu}\bar{u}_{1}^{3}\right|\leq\,C\epsilon.$$
\end{proof}

\section{Estimates of $C_{1}^{\alpha}$ and $C_{2}^{\alpha}$}\label{sec_C1C2}

In this section, we first prove that $C_{1}^{\alpha}$ and $C_{2}^{\alpha}$ are uniformly bounded with respect to $\epsilon$, and then estimate the difference  $C_{1}^{\alpha}-C_{2}^{\alpha}$.

\subsection{Boundedness of $C_{i}^{\alpha}$,  $i=1,2, \alpha=1,2,3$}

\begin{lemma}\label{lem_C1C2bound}
Let $C_{i}^{\alpha}$ be defined in \eqref{decom_u}. Then
\begin{equation*}
|C_{i}^{\alpha}|\leq\,C, \qquad
i=1,2; ~~\alpha=1,2,3.
\end{equation*}
\end{lemma}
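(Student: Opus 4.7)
The strategy is to reduce the bound $|C_i^\alpha|\le C$ to an $L^2$ bound on $u$ restricted to $D_i$, and then to extract $C_i^\alpha$ from $u\big|_{D_i}$ using the fact that this restriction is a rigid motion. For the first step, since $\mathrm{dist}(D_1\cup D_2,\partial\Omega)\ge\kappa_1$, I can construct $\Phi\in H^1_\varphi(\Omega;\mathbb{R}^{2})$ that equals $\varphi$ on $\partial\Omega$, vanishes identically on a neighborhood of $\overline{D_1\cup D_2}$, and satisfies $\|\Phi\|_{H^1(\Omega)}\le C$ (exactly as in the proof of Lemma \ref{lem_v3_v1+v2}). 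Because $e(\Phi)=0$ in $D_1\cup D_2$, the function $\Phi$ lies in the admissible set $\mathcal{A}$, so the minimization property of $u$ gives $I_\infty[u]\le I_\infty[\Phi]\le C$; by ellipticity \eqref{coeff2} this forces $\|e(u)\|_{L^2(\widetilde{\Omega})}\le C$. Extending $e(u)$ by $0$ in $D_1\cup D_2$ (legitimate since $u$ is a rigid motion there), I obtain $\|e(u-\Phi)\|_{L^2(\Omega)}\le C$. Applying the First Korn inequality (Lemma \ref{lemmaA}) to $u-\Phi\in H^1_0(\Omega;\mathbb{R}^{2})$ followed by Poincar\'e then yields $\|u\|_{H^1(\Omega;\mathbb{R}^{2})}\le C$.

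Next, since $u\big|_{D_i}=\sum_{\alpha=1}^3 C_i^\alpha\psi^\alpha$, pairing with $\psi^\beta$ and integrating over $D_i$ yields the $3\times 3$ linear system
\begin{equation*}
\int_{D_i}u\cdot\psi^\beta\,dx=\sum_{\alpha=1}^{3} C_i^\alpha\, G^{(i)}_{\alpha\beta},\qquad G^{(i)}_{\alpha\beta}:=\int_{D_i}\psi^\alpha\cdot\psi^\beta\,dx.
\end{equation*}
The Gram matrix $G^{(i)}$ is symmetric positive definite since $\{\psi^\alpha\}$ is pointwise linearly independent on any open set in $\mathbb{R}^2$ (a nonzero rigid motion has at most one zero). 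Its smallest eigenvalue is bounded below by a universal constant, because the geometry of $D_i$ (in particular $|D_i|$ and the second moments of $D_i$ about its centroid) is uniformly nondegenerate under the standing hypotheses: strict convexity with principal curvatures $\ge\kappa_0$ and uniform control on $\|\partial D_i\|_{C^{2,\gamma}}$. Inverting the system and applying Cauchy--Schwarz gives
\begin{equation*}
|C_i^\alpha|\le C\|u\|_{L^2(D_i)}\le C\|u\|_{L^2(\Omega)}\le C,
\end{equation*}
which is exactly the desired bound.

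The only delicate point in this approach is the universal lower bound on the smallest eigenvalue of $G^{(i)}$; this reduces to the assertion that $D_i$ cannot degenerate in shape as $\epsilon\to 0$, and is ensured by the quantitative regularity and strict convexity already assumed on $\partial D_i$. All other steps are routine: energy minimization against a convenient admissible competitor, followed by a Korn--Poincar\'e estimate. An alternative route, which I would fall back on if one does not wish to invoke a shape-nondegeneracy argument for $D_i$, is to derive the $6\times 6$ linear system $\sum_{j,\beta}a_{ij}^{\alpha\beta}C_j^\beta=-\int_{\widetilde{\Omega}}(\mathbb{C}^{0}e(v_3),e(v_i^\alpha))\,dx$ (with $a_{ij}^{\alpha\beta}=\int_{\widetilde{\Omega}}(\mathbb{C}^{0}e(v_i^\alpha),e(v_j^\beta))\,dx$) obtained by multiplying \eqref{mainequation} by $v_i^\alpha$ and integrating by parts, and then diagonalize the bounded/singular mode structure via the basis change $C_1^\alpha\pm C_2^\alpha$; the bounded mode contributes energy $\lesssim\|\nabla(v_1^\alpha+v_2^\alpha)\|_{L^2}^2\le C$ (Lemma \ref{lem_v3_v1+v2}), while the singular mode has eigenvalue $\to\infty$, so individual $C_i^\alpha$'s remain bounded.
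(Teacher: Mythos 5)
Your argument is correct, and its second half takes a genuinely different route from the paper. The first half coincides with the paper's: both compare $u$ with the extension $\Phi$ of $\varphi$ vanishing near $\overline{D_1\cup D_2}$ (which lies in $\mathcal{A}$), use the minimizing property $I_\infty[u]\le I_\infty[\Phi]\le C$ together with \eqref{coeff2} and a Korn inequality to get an $\epsilon$-independent $H^1$ bound on $u$. After that the paper extracts the coefficients from the \emph{boundary} values: it uses the trace theorem to bound $\|u\|_{L^2(\partial D_1\setminus B_R)}$, writes $u=\sum_\alpha C_1^\alpha\psi^\alpha$ there, and proves the lower bound $\|\sum_\alpha \widehat{C}_1^\alpha\psi^\alpha\|_{L^2(\partial D_1\setminus B_R)}\ge 1/C$ by a compactness/contradiction argument along $\epsilon_j\to 0$, relying on the linear independence of $\{\psi^\alpha\}$ on the limiting boundary piece. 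You instead extract them from the \emph{interior}: you test $u|_{D_i}=\sum_\alpha C_i^\alpha\psi^\alpha$ against $\psi^\beta$ over $D_i$ and invert the Gram matrix $G^{(i)}$. The one point you rightly flag as delicate, the universal lower bound on $\lambda_{\min}(G^{(i)})$, is exactly the analogue of the paper's nondegeneracy step; it does hold under the standing hypotheses and can even be made quantitative (the curvature bounds $\kappa_0\le$ principal curvatures together with the $C^{2,\gamma}$ control give an interior ball of universal radius at universally bounded distance from the origin, and an explicit computation of $\int_B|a+Ax|^2$ over such a ball yields the bound), whereas the paper settles for a soft subsequence argument. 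Your version also has the minor advantage that working with $u-\Phi\in H^1_0(\Omega;\mathbb{R}^2)$ lets you use the First Korn inequality on the fixed domain $\Omega$ with a constant independent of the geometry, avoiding any discussion of Korn constants on the $\epsilon$-dependent set $\widetilde{\Omega}$. The alternative route you sketch at the end (the $6\times 6$ system in $a_{ij}^{\alpha\beta}$ and the bounded/singular mode splitting) is not needed and, as stated, is too loose to stand on its own, since the spectral information it invokes is only established later in Section 4; the main argument, however, is complete and correct.
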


\begin{proof}  We only need to prove it for $i=1$,
since the proof  for $i=2$ is the same.
Let $u_{\epsilon}$ be the solution of \eqref{mainequation}.
 By Theorem \ref{theorem5.6} and
Theorem \ref{thm_minimizer_infity} in the Appendix,
$$I_{\infty}[u_{\epsilon}]:=\frac{1}{2}\int_{\widetilde{\Omega}}
\left(\mathbb{C}^{(0)}
e(u_{\epsilon}),e(u_{\epsilon})\right)
\leq I_\infty[\Phi]\leq\,C$$
where $\Phi$ is the one in the proof of Lemma \ref{lem_v3_v1+v2}.

It follows that
$$\|u_{\epsilon}\|_{H^{1}(\widetilde{\Omega})}
\leq\,C\|e(u_{\epsilon})\|_{L^{2}(\widetilde{\Omega})}
\leq C I_\infty[u_\epsilon]\leq\,C.$$
By the trace embedding theorem,
$$\|u_{\epsilon}\|_{L^{2}(\partial{D}_{1}\setminus{B}_{R})}\leq\,C.$$
On $\partial{D}_{1}$,
$$u_{\epsilon}=\sum_{\alpha=1}^{3}C_{1}^{\alpha}\psi^{\alpha}.$$
If $C_{1}:=(C_{1}^{1},C_{1}^{2},C_{1}^{3})=0$, there is nothing to prove.
Otherwise
\begin{equation}\label{b3-1}
C\geq|C_{1}|
\left\|\sum_{\alpha=1}^{3}
\widehat{C}_{1}^{\alpha}\psi^{\alpha}\right\|_{L^{2}(\partial{D}_{1}\setminus{B}_{R})},
\end{equation}
where $\widehat{C}_{1}^{\alpha}=\frac{C_{1}^{\alpha}}{|C_{1}|}$ and $|\widehat{C}_{1}|=1$. It is easy to see that
\begin{equation}\label{b3-2}
\left\|\sum_{\alpha=1}^{3}\widehat{C}_{1}^{\alpha}\psi^{\alpha}\right\|_{L^{2}(\partial{D}_{1}\setminus{B}_{R})}\geq\frac{1}{C}.
\end{equation}
Indeed, if not, along a subsequence $\epsilon\rightarrow0$, $\widehat{C}_{1}^{\alpha}\rightarrow\bar{C}_{1}^{\alpha}$, and
$$\left\|\sum_{\alpha=1}^{3}\bar{C}_{1}^{\alpha}\psi^{\alpha}\right\|_{L^{2}(\partial{D}^{*}_{1}\setminus{B}_{R})}=0,$$
where $\partial{D}^{*}_{1}$ is the limit of $\partial{D}_{1}$ as $\epsilon\rightarrow0$ and $|\bar{C}_{1}|=1$. This implies $\sum_{\alpha=1}^{3}\bar{C}_{1}^{\alpha}\psi^{\alpha}=0$ on $\partial{D}^{*}_{1}\setminus{B}_{R}$. But $\left\{\psi^{\alpha}\big|_{\partial{D}^{*}_{1}\setminus{B}_{R}}\right\}$ is easily seen to be linear independent, we must have $\bar{C}_{1}=0$. This is a contradiction.
 Lemma \ref{lem_C1C2bound} for $i=1$
 follows from \eqref{b3-1} and \eqref{b3-2}.
\end{proof}

\subsection{Estimates of $|C_{1}^{\alpha}-C_{2}^{\alpha}|$, $\alpha=1,2$}

In the rest of this section, we prove
\begin{prop}\label{prop_C1-C2}
Let $C_{i}^{\alpha}$ be defined in \eqref{decom_u}. Then
\begin{equation*}
|C_{1}^{\alpha}-C_{2}^{\alpha}|\leq\,C\sqrt{\epsilon},\quad\alpha=1,2.
\end{equation*}
\end{prop}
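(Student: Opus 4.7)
I would start by extracting a $3\times 3$ linear system for $\Delta^\beta := C_1^\beta - C_2^\beta$ out of the three integral compatibility conditions $\int_{\partial D_1}\frac{\partial u}{\partial\nu_0}\big|_+\cdot\psi^\alpha\,dS = 0$ ($\alpha = 1,2,3$) in \eqref{mainequation}. Plugging in the decomposition \eqref{decom_u} and writing $C_1^\beta = C_2^\beta + \Delta^\beta$, the system becomes
\begin{equation*}
\sum_{\beta=1}^{3} p_{11}^{\alpha\beta}\,\Delta^\beta \;=\; -\sum_{\beta=1}^{3}\bigl(p_{11}^{\alpha\beta}+p_{12}^{\alpha\beta}\bigr)\,C_2^\beta \;-\; q_1^\alpha, \qquad \alpha = 1,2,3,
\end{equation*}
where $p_{ij}^{\alpha\beta} := \int_{\partial D_i}\frac{\partial v_j^\beta}{\partial\nu_0}\big|_+\cdot\psi^\alpha\,dS$ and $q_i^\alpha := \int_{\partial D_i}\frac{\partial v_3}{\partial\nu_0}\big|_+\cdot\psi^\alpha\,dS$. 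Since $\mathcal{L}_{\lambda,\mu} v_j^\beta = 0 = \mathcal{L}_{\lambda,\mu} v_3$ in $\widetilde\Omega$, integration by parts recasts these quantities as symmetric energy pairings $p_{ij}^{\alpha\beta} = -\int_{\widetilde\Omega}(\mathbb{C}^0 e(v_i^\alpha), e(v_j^\beta))\,dx$ (and analogously for $q_i^\alpha$).

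The first task is to show the right-hand side is $O(1)$ uniformly in $\epsilon$. By Lemma \ref{lem_C1C2bound}, $|C_2^\beta|\leq C$, so it suffices to show $|p_{11}^{\alpha\beta}+p_{12}^{\alpha\beta}|, |q_1^\alpha|\leq C$. Choose a smooth cutoff $\eta$ equal to $1$ on a neighborhood of $\overline{D_1}$ and $0$ on a neighborhood of $\overline{D_2}\cup\partial\Omega$, with the transition taking place across a horizontal slab through the middle of the neck $\Omega_R$, so that $\|\nabla\eta\|_{L^1(\widetilde\Omega)}\leq C$ (the $x_2$-transition of width $\sim\delta$ combined with $|\partial_{x_2}\eta|\sim 1/\delta$ gives an $L^1$-bounded gradient uniformly in $\epsilon$). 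Since $e(\psi^\alpha) = 0$, the test function $\Psi^\alpha := \psi^\alpha\eta$ has strain $e(\Psi^\alpha) = \tfrac12(\psi^\alpha\otimes\nabla\eta + \nabla\eta\otimes\psi^\alpha)$ supported in $\{\nabla\eta\neq 0\}$. Integration by parts, using $\mathcal{L}_{\lambda,\mu}(v_1^\beta + v_2^\beta) = 0 = \mathcal{L}_{\lambda,\mu}v_3$ and the boundary behavior of $\eta$, yields
\begin{equation*}
p_{11}^{\alpha\beta}+p_{12}^{\alpha\beta} = -\int_{\widetilde\Omega}\bigl(\mathbb{C}^0 e(v_1^\beta+v_2^\beta), e(\Psi^\alpha)\bigr)\,dx, \qquad q_1^\alpha = -\int_{\widetilde\Omega}\bigl(\mathbb{C}^0 e(v_3), e(\Psi^\alpha)\bigr)\,dx,
\end{equation*}
and both integrals are bounded by $\|\nabla\cdot\|_{L^\infty}\cdot\|\nabla\eta\|_{L^1}\leq C$ using $\|\nabla(v_1^\beta+v_2^\beta)\|_{L^\infty}, \|\nabla v_3\|_{L^\infty}\leq C$ from Proposition \ref{prop_gradient}.

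The core of the proof is to extract the leading behavior of the matrix $P_{11}:=(p_{11}^{\alpha\beta})$. Writing $v_1^\alpha = \bar u_1^\alpha + w_1^\alpha$ and using $\|\nabla w_1^\alpha\|_{L^2(\widetilde\Omega)}\leq C$ from \eqref{energy_w} (so that cross terms between $\bar u_1^\alpha$ and $w_1^\beta$ are $O(\epsilon^{-1/4}) = o(\epsilon^{-1/2})$ by Cauchy--Schwarz, since $\|e(\bar u_1^\alpha)\|_{L^2}\sim\epsilon^{-1/4}$), the dominant part of $p_{11}^{\alpha\beta}$ comes from $-\int_{\Omega_R}(\mathbb{C}^0 e(\bar u_1^\alpha), e(\bar u_1^\beta))\,dx$. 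A direct computation with \eqref{def:ubar1112}, \eqref{def:ubar1323} and $\partial_{x_2}\bar u\sim(2\delta)^{-1}$ in $\Omega_R$ then yields
\begin{equation*}
p_{11}^{\alpha\alpha} = -\frac{c_\alpha}{\sqrt\epsilon}\bigl(1+o(1)\bigr) \quad \mbox{for } \alpha=1,2, \; c_\alpha > 0 \mbox{ explicit},
\end{equation*}
while the entries $p_{11}^{12}$, $p_{11}^{\alpha 3}$, $p_{11}^{3\beta}$ and $p_{11}^{33}$ are all $o(\epsilon^{-1/2})$ (the first being at worst $O(|\ln\epsilon|)$ due to odd-in-$x_1$ cancellation, the rest being $O(1)$). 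In particular, the upper-left $2\times 2$ block of $P_{11}$ is diagonally dominant with determinant of order $\epsilon^{-1}$, so its inverse has entries of order $\sqrt\epsilon$. To finish, $|\Delta^3| \leq |C_1^3|+|C_2^3| \leq 2C$ by Lemma \ref{lem_C1C2bound}, so the first two equations of the system reduce to $\sum_{\beta=1}^2 p_{11}^{\alpha\beta}\,\Delta^\beta = O(1)$ for $\alpha = 1,2$, and inverting the dominant $2\times 2$ block produces $|\Delta^\alpha|\leq C\sqrt\epsilon$ for $\alpha = 1,2$, as required. The main obstacle is precisely this third step: one must carefully isolate the $\epsilon^{-1/2}$ singular contributions on the diagonal, confirm that the translation--rotation off-diagonal couplings are strictly subleading, and absorb the correction terms coming from $w_1^\alpha$ into the $o(\epsilon^{-1/2})$ error.
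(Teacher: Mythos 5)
Your proposal follows the same overall strategy as the paper: use the compatibility conditions on $\partial D_1$ to form a linear system for $C_1-C_2$, convert the coefficients into energy pairings by integration by parts, show the data on the right is $O(1)$, show the diagonal (translation) entries have size $\epsilon^{-1/2}$ while the couplings are subleading, and invert. There are, however, two genuine deviations worth recording. First, you bound $|p_{11}^{\alpha\beta}+p_{12}^{\alpha\beta}|$ and $|q_1^{\alpha}|$ by testing against $\psi^{\alpha}\eta$ with a cutoff whose gradient is $L^1$-bounded across the neck, whereas the paper (Lemma \ref{lem_ap}) pairs $e(v_1^{\beta}+v_2^{\beta})$ and $e(v_3)$ directly with $e(v_1^{\beta})$ and uses the $\epsilon$-uniform bound $\int_{\widetilde\Omega}|\nabla v_1^{\beta}|\,dx\le C$ coming from \eqref{nablav1}; both work, and yours has the mild advantage of not needing the pointwise gradient estimate for $v_1^\beta$ at this stage. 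Second, instead of Cramer's rule on the full $3\times3$ matrix, you use $|\Delta^3|\le C$ (Lemma \ref{lem_C1C2bound}) to move the third column to the right-hand side and invert only the upper-left $2\times2$ block; this neatly avoids the lower bound $a_{11}^{33}\ge 1/C$, whose proof in the paper requires the compactness/Korn claim \eqref{a11_33_claim}, so your route is in this respect a genuine simplification.

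The one place where your sketch asserts more than your invoked tools deliver is the claim that the translation--rotation couplings $p_{11}^{\alpha 3}$ ($\alpha=1,2$) are $O(1)$. This bound is essential for the sharp conclusion: in your reduced $2\times2$ system the term $p_{11}^{\alpha3}\Delta^3$ sits on the right-hand side, so an estimate $|p_{11}^{\alpha3}|\le C\epsilon^{-1/4}$ (which is all that Cauchy--Schwarz with $\|\nabla\bar u_1^{\alpha}\|_{L^2}\sim\epsilon^{-1/4}$ and $\|\nabla w_1^{3}\|_{L^2}\le C$ gives for the cross term $\int(\mathbb{C}^0\nabla\bar u_1^{\alpha},\nabla w_1^{3})$) would only yield $|C_1^{\alpha}-C_2^{\alpha}|\le C\epsilon^{1/4}$, and the naive use of the pointwise bounds \eqref{mainev1}, \eqref{mainevi3} gives only $O(|\ln\epsilon|)$, hence $C\sqrt{\epsilon}|\ln\epsilon|$. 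The $O(1)$ bound is true, but its proof is exactly Step 3 of Lemma \ref{lem_a_11}: one must split $v_1^{\alpha}=\bar u_1^{\alpha}+w_1^{\alpha}$, $v_1^{3}=\bar u_1^{3}+w_1^{3}$, use $|x_2|\le C(\epsilon+|x_1|^2)$ in the neck to tame $(\partial_{x_2}\bar u)^2$ in the $\bar u$--$\bar u$ term, and crucially use the $L^\infty$ bound $\|\nabla w_1^{3}\|_{L^\infty}\le C$ of Lemma \ref{lemma3.6} together with $\|\nabla\bar u_1^{\alpha}\|_{L^1(\Omega_{R/2})}\le C$ for the term $\int(\mathbb{C}^0\nabla\bar u_1^{\alpha},\nabla w_1^{3})$. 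With that ingredient added, your argument closes and gives the stated $C\sqrt{\epsilon}$.
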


By  the fourth line of \eqref{mainequation},
\begin{align}\label{C1C2_2}
\sum_{\alpha=1}^{3}C_{1}^{\alpha}\int_{\partial{D}_{j}}\frac{\partial{v}_{1}^{\alpha}}{\partial\nu_{0}}\bigg|_{+}\cdot\psi^{\beta}
&+\sum_{\alpha=1}^{3}C_{2}^{\alpha}\int_{\partial{D}_{j}}\frac{\partial{v}_{2}^{\alpha}}{\partial\nu_{0}}\bigg|_{+}\cdot\psi^{\beta}
+\int_{\partial{D}_{j}}\frac{\partial{v}_{3}}{\partial\nu_{0}}\bigg|_{+}\cdot\psi^{\beta}=0,\nonumber\\
&\quad\,j=1,2;~~\beta=1,2,3.
\end{align}

Denote
\begin{align*}
a_{ij}^{\alpha\beta}=-\int_{\partial{D}_{j}}\frac{\partial{v}_{i}^{\alpha}}{\partial\nu_{0}}\bigg|_{+}\cdot\psi^{\beta},\quad
b_{j}^{\beta}=\int_{\partial{D}_{j}}\frac{\partial{v}_{3}}{\partial\nu_{0}}\bigg|_{+}\cdot\psi^{\beta},\quad\,i,j=1,2;~\alpha,\beta=1,2,3.
\end{align*}
Integrating by parts
over $\widetilde \Omega$ and using
(\ref{v1alpha}), we have
$$a_{ij}^{\alpha\beta}=\int_{\widetilde{\Omega}}\left(\mathbb{C}^{0}e(v_{i}^{\alpha}),e(v_{j}^{\beta})\right)dx,\quad
b_{j}^{\beta}=-\int_{\widetilde{\Omega}}\left(\mathbb{C}^{0}e(v_{3}),e(v_{j}^{\beta})\right)dx.$$
Then \eqref{C1C2_2} can be written as
\begin{equation}\label{C1C2_3}
\left\{
\begin{aligned}
\sum_{\alpha=1}^{3}C_{1}^{\alpha}a_{11}^{\alpha\beta}+\sum_{\alpha=1}^{3}C_{2}^{\alpha}a_{21}^{\alpha\beta}-b_{1}^{\beta}&=0,\\
\sum_{\alpha=1}^{3}C_{1}^{\alpha}a_{12}^{\alpha\beta}+\sum_{\alpha=1}^{3}C_{2}^{\alpha}a_{22}^{\alpha\beta}-b_{2}^{\beta}&=0,
\end{aligned}
\right.\quad\quad~~\beta=1,2,3.
\end{equation}
For simplicity, we use $a_{ij}$ to denote the $3\times3$ matrix $(a_{ij}^{\alpha\beta})$. To estimate $|C_{1}^{\alpha}-C_{2}^{\alpha}|$, $\alpha=1,2$, we only need to use the first three equations in \eqref{C1C2_3}:
$$ a_{11}C_{1} +a_{21}C_{2} =b_{1},$$
where
$$C_{1}=(C_{1}^{1}, C_{1}^{2}, C_{1}^{3})^{T} ,
\quad\,C_{2}=(C_{2}^{1}, C_{2}^{2}, C_{2}^{3})^{T},
\quad\,b_{1}=(b_{1}^{1},b_{1}^{2},b_{1}^{3})^{T}.$$
We  write the equation as
\begin{equation}\label{C1C2_4}
a_{11}(C_{1}-C_{2})=p:=b_{1}-(a_{11}+a_{21})C_{2}.
\end{equation}
Namely,
\begin{equation}\label{AX=p}
a_{11}(C_{1}-C_{2}) \equiv
\begin{pmatrix}
  a_{11}^{11} & a_{11}^{12} & a_{11}^{13} \\\\
  a_{11}^{21} & a_{11}^{22} & a_{11}^{23} \\\\
 a_{11}^{31} & a_{11}^{32} & a_{11}^{33} \\
\end{pmatrix}\begin{pmatrix}
               C_{1}^{1}-C_{2}^{1} \\\\
               C_{1}^{2}-C_{2}^{2} \\\\
               C_{1}^{3}-C_{2}^{3} \\
             \end{pmatrix}=\begin{pmatrix}
               p^{1}\\
               p^{2} \\
               p^{3} \\
             \end{pmatrix}.
\end{equation}

We will show that $a_{11}$ is positive definite, which we assume for the
 time being.
By Cramer's rule, we see from \eqref{AX=p},
$$C_{1}^{1}-C_{2}^{1}=\frac{1}{\det{a_{11}}}
\begin{vmatrix}
  p^{1} & a_{11}^{12} & a_{11}^{13} \\\\
  p^{2} & a_{11}^{22} & a_{11}^{23} \\\\
 p^{3} & a_{11}^{32} & a_{11}^{33} \\
\end{vmatrix},\quad
C_{1}^{2}-C_{2}^{2}=\frac{1}{\det{a_{11}}}
\begin{vmatrix}
  a_{11}^{11} & p^{1} & a_{11}^{13} \\\\
  a_{11}^{21} & p^{2} & a_{11}^{23} \\\\
 a_{11}^{31} & p^{3} & a_{11}^{33} \\
\end{vmatrix}.$$
Therefore
\begin{align}\label{aaa1}
C_{1}^{1}-C_{2}^{1}=\frac{1}{\det{a_{11}}}\left(
              p^{1}\begin{vmatrix}
                     a_{11}^{22} & a_{11}^{23} \\\\
                     a_{11}^{32} & a_{11}^{33} \\
                   \end{vmatrix}
             -p^{2}\begin{vmatrix}
                     a_{11}^{12} & a_{11}^{13} \\\\
                     a_{11}^{32} & a_{11}^{33} \\
                   \end{vmatrix} +
               p^{3} \begin{vmatrix}
                     a_{11}^{12} & a_{11}^{13} \\\\
                     a_{11}^{22} & a_{11}^{23} \\
                   \end{vmatrix}
             \right),
\end{align}
and
\begin{align}\label{aaa2}
  C_{1}^{2}-C_{2}^{2}=\frac{1}{\det{a_{11}}}\left(
              -p^{1}\begin{vmatrix}
                     a_{11}^{21} & a_{11}^{23} \\\\
                     a_{11}^{31} & a_{11}^{33} \\
                   \end{vmatrix}
              +
               p^{2}\begin{vmatrix}
                     a_{11}^{11} & a_{11}^{13} \\\\
                     a_{11}^{31} & a_{11}^{33} \\
                   \end{vmatrix}
               -p^{3} \begin{vmatrix}
                     a_{11}^{11} & a_{11}^{13} \\\\
                     a_{11}^{21} & a_{11}^{23} \\
                   \end{vmatrix}
            \right).
\end{align}
In order to prove Proposition \ref{prop_C1-C2}, we
first study the right hand side of \eqref{AX=p} and have the following estimates.
\begin{lemma}\label{lem_ap}
\begin{align*}
&\left|a_{11}^{\alpha\beta}+a_{21}^{\alpha\beta}\right|\leq\,C,\quad\alpha,\beta=1,2,3;\\\\
&\left|b_{1}^{\beta}\right|\leq\,C,\quad\beta=1,2,3.
\end{align*}
Consequently,
\begin{equation}\label{p_bound}
|p|\leq\,C.
\end{equation}
\end{lemma}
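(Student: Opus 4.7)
The plan is to exploit the fact that although the individual gradients $\nabla v_i^\alpha$ can blow up as $\epsilon\to 0$, certain pairings produce quantities that are already uniformly bounded, so one just has to arrange the integrals in \eqref{C1C2_4} to expose those pairings. Concretely, the energy-form representation
$$
a_{ij}^{\alpha\beta}=\int_{\widetilde\Omega}\bigl(\mathbb{C}^{0}e(v_i^{\alpha}),e(v_j^{\beta})\bigr)\,dx,\qquad
b_j^{\beta}=-\int_{\widetilde\Omega}\bigl(\mathbb{C}^{0}e(v_3),e(v_j^{\beta})\bigr)\,dx,
$$
together with integration by parts off $v_1^{\beta}$ will reduce every estimate to a boundary integral on $\partial D_1$ whose integrand is controlled by $L^{\infty}$ bounds already supplied by Lemma \ref{lem_v3_v1+v2}.

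For the first claim I would write
$$
a_{11}^{\alpha\beta}+a_{21}^{\alpha\beta}=\int_{\widetilde\Omega}\bigl(\mathbb{C}^{0}e(v_1^{\alpha}+v_2^{\alpha}),e(v_1^{\beta})\bigr)\,dx,
$$
and integrate by parts, throwing the derivative onto $v_1^{\beta}$. Since $\mathcal{L}_{\lambda,\mu}(v_1^{\alpha}+v_2^{\alpha})=0$ in $\widetilde\Omega$, the bulk term drops out. On $\partial\Omega$ and $\partial D_2$ the boundary data $v_1^{\beta}$ vanish, and on $\partial D_1$ we have $v_1^{\beta}=\psi^{\beta}$, leaving
$$
a_{11}^{\alpha\beta}+a_{21}^{\alpha\beta}=-\int_{\partial D_1}\frac{\partial(v_1^{\alpha}+v_2^{\alpha})}{\partial\nu_0}\bigg|_{+}\!\cdot\psi^{\beta}\,dS.
$$
Applying the uniform bound $\|\nabla(v_1^{\alpha}+v_2^{\alpha})\|_{L^{\infty}(\widetilde\Omega)}\le C$ from Lemma \ref{lem_v3_v1+v2}, together with $|\psi^{\beta}|\le C$ on the bounded set $\partial D_1$, yields the desired estimate $|a_{11}^{\alpha\beta}+a_{21}^{\alpha\beta}|\le C$. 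Notice that the naive Cauchy--Schwarz approach fails here because $\|e(v_1^{\beta})\|_{L^2(\widetilde\Omega)}$ is not known to be uniformly bounded in $\epsilon$; the whole point is that the pairing $v_1^{\alpha}+v_2^{\alpha}$, not either summand alone, is what is controlled.

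The estimate on $b_1^{\beta}$ is parallel: integrate
$$
b_1^{\beta}=-\int_{\widetilde\Omega}\bigl(\mathbb{C}^{0}e(v_3),e(v_1^{\beta})\bigr)\,dx
$$
by parts, this time putting all derivatives on $v_1^{\beta}$ via the stress of $v_3$. Using $\mathcal{L}_{\lambda,\mu}v_3=0$ in $\widetilde\Omega$ and the same boundary values of $v_1^{\beta}$, only the $\partial D_1$ contribution survives and one obtains
$$
b_1^{\beta}=\int_{\partial D_1}\frac{\partial v_3}{\partial\nu_0}\bigg|_{+}\!\cdot\psi^{\beta}\,dS,
$$
which is again bounded by $C\|\nabla v_3\|_{L^{\infty}(\widetilde\Omega)}\le C$. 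Finally, the estimate \eqref{p_bound} follows immediately from its definition $p=b_1-(a_{11}+a_{21})C_2$ by combining the two bounds just proved with $|C_2^{\alpha}|\le C$ from Lemma \ref{lem_C1C2bound}. There is no real obstacle in this argument; the essential point---and the only nontrivial input---is the uniform $L^{\infty}$ gradient bound on the combinations $v_1^{\alpha}+v_2^{\alpha}$ and on $v_3$, which is exactly what Lemma \ref{lem_v3_v1+v2} provides.
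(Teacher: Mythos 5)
Your proof is correct, but it takes a different route from the paper's. You go back to the boundary--integral form: by the very definitions $a_{11}^{\alpha\beta}+a_{21}^{\alpha\beta}=-\int_{\partial D_1}\frac{\partial(v_1^\alpha+v_2^\alpha)}{\partial\nu_0}\big|_{+}\cdot\psi^\beta$ and $b_1^\beta=\int_{\partial D_1}\frac{\partial v_3}{\partial\nu_0}\big|_{+}\cdot\psi^\beta$ (your integration by parts just reverses the step the paper uses to pass to the energy form), so the singular function $v_1^\beta$ enters only through its boundary data $\psi^\beta$ on $\partial D_1$, and everything is bounded by the uniform gradient bounds of Lemma \ref{lem_v3_v1+v2} together with the $C^1(\overline{\widetilde\Omega})$ regularity that makes those bounds valid up to $\partial D_1$, including the thin neck. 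The paper instead stays with the volume integral and estimates it by an $L^\infty$--$L^1$ pairing, $\left|a_{11}^{\alpha\beta}+a_{21}^{\alpha\beta}\right|\le C\|\nabla(v_1^\alpha+v_2^\alpha)\|_{L^\infty(\widetilde\Omega)}\int_{\widetilde\Omega}|\nabla v_1^\beta|\,dx$, where the key input is the uniform $L^1$ bound $\int_{\widetilde\Omega}|\nabla v_1^\beta|\,dx\le C$, which rests on the pointwise estimates \eqref{nablav1} and \eqref{lem31.02}, i.e.\ on the technical work of Proposition \ref{prop1} and Lemma \ref{lemvi3}. Your remark that plain Cauchy--Schwarz fails (since $\|e(v_1^\beta)\|_{L^2}\sim\epsilon^{-1/4}$) is accurate and applies to both arguments; the trade-off is that your version needs only Lemma \ref{lem_v3_v1+v2} plus the up-to-the-boundary $C^1$ regularity of the $v$'s (already established in the paper), while the paper's version avoids boundary traces of stresses altogether at the cost of invoking the interior gradient estimates for $v_1^\beta$. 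The final step, $|p|\le C$ from $p=b_1-(a_{11}+a_{21})C_2$ and Lemma \ref{lem_C1C2bound}, is the same in both.
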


\begin{proof}
For $\beta=1,2,3$, using \eqref{nablav1} and \eqref{lem31.02},
\begin{align}\label{nablav11_L1}
\int_{\widetilde{\Omega}}\left|\nabla{v}_{1}^{\beta}\right|dx
\leq\int_{\Omega_{R/2}}\left|\nabla{v}_{1}^{\beta}\right|dx+\int_{\widetilde{\Omega}\setminus\Omega_{R/2}}\left|\nabla{v}_{1}^{\beta}\right|dx
\leq\,C.
\end{align}
For $\alpha,\beta=1,2,3$, by
Lemma \ref{lem_v3_v1+v2} and \eqref{nablav11_L1},
 we have
\begin{align*}
\left|a_{11}^{\alpha\beta}+a_{21}^{\alpha\beta}\right|
&=\left|\int_{\widetilde{\Omega}}\left(\mathbb{C}^{0}e(v_{1}^{\alpha}+v_{2}^{\alpha}),e(v_{1}^{\beta})\right)dx\right|\\
&\leq\,C\left\|\nabla(v_{1}^{\alpha}+v_{2}^{\alpha})\right\|_{L^{\infty}(\widetilde{\Omega})}\int_{\widetilde{\Omega}}\left|\nabla{v}_{1}^{\beta}\right|dx\\
&\leq\,C.
\end{align*}
Similarly, it follows from
Lemma \ref{lem_v3_v1+v2}
and
 \eqref{nablav11_L1} that
$$\left|b_{1}^{\beta}\right|=\left|\int_{\widetilde{\Omega}}\left(\mathbb{C}^{0}e(v_{1}^{\beta}),e(v_{3})\right)dx\right|
\leq\,C\|\nabla{v}_{3}\|_{L^{\infty}(\widetilde{\Omega})}\int_{\widetilde{\Omega}}\left|\nabla{v}_{1}^{\beta}\right|dx
\leq\,C,\quad\beta=1,2,3.$$
Lemma \ref{lem_ap} follows immediately, in view of Lemma \ref{lem_C1C2bound}.
\end{proof}

\begin{lemma}\label{lem_a_11}
$a_{11}$ is positive definite, and
\begin{equation}\label{a11_1122}
\frac{1}{C\sqrt{\epsilon}}\leq\,a_{11}^{\alpha\alpha}\leq\frac{C}{\sqrt{\epsilon}},\quad\alpha=1,2,
\end{equation}
\begin{equation}\label{a11_33}
\frac{1}{C}\leq\,a_{11}^{33}\leq\,C,\quad\alpha=1,2;
\end{equation}
\begin{equation}\label{a11_12}
\left|a_{11}^{12}\right|=\left|a_{11}^{21}\right|\leq\frac{C}{\epsilon^{1/4}},
\end{equation}
\begin{equation}\label{a11_alpha3}
|a_{11}^{\alpha3}|=|a_{11}^{3\alpha}|\leq\,C,\quad\alpha=1,2;
\end{equation}
and
\begin{equation}\label{a11_det}
\frac{1}{C\epsilon}\leq\det{a_{11}}\leq\frac{C}{\epsilon}.
\end{equation}
\end{lemma}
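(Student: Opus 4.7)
The plan is to exploit throughout the identity
$$a_{11}^{\alpha\beta}=\int_{\widetilde{\Omega}}\left(\mathbb{C}^{0}e(v_{1}^{\alpha}),e(v_{1}^{\beta})\right)dx,$$
the decomposition $v_{1}^{\alpha}=\bar{u}_{1}^{\alpha}+w_{1}^{\alpha}$ from Section 3, and the uniform energy bound $\int_{\widetilde{\Omega}}|\nabla w_{1}^{\alpha}|^{2}dx\leq C$ from Proposition \ref{prop1}. Positive definiteness of $a_{11}$ is immediate: for $\xi=(\xi_{1},\xi_{2},\xi_{3})\neq 0$ the quadratic form $\xi^{T}a_{11}\xi$ equals $\int_{\widetilde{\Omega}}(\mathbb{C}^{0}e(V),e(V))\,dx$ with $V:=\sum_{\alpha}\xi_{\alpha}v_{1}^{\alpha}$, which by ellipticity vanishes only if $V$ is a rigid motion; but $V=0$ on $\partial\Omega$ forces $V\equiv 0$ in $\widetilde{\Omega}$, hence $\sum\xi_{\alpha}\psi^{\alpha}=0$ on $\partial D_{1}$, and therefore $\xi=0$.

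For the diagonal entries with $\alpha=1,2$, the upper bound follows from energy minimization: $a_{11}^{\alpha\alpha}\leq 2I_{\infty}[v_{1}^{\alpha}]\leq 2I_{\infty}[\bar{u}_{1}^{\alpha}]$, and using \eqref{nablau_bar} with the strip height comparable to $\epsilon+|x_{1}|^{2}$ gives $\int_{\Omega_{R}}|\nabla\bar{u}|^{2}dx\leq C\int_{-R}^{R}(\epsilon+|x_{1}|^{2})^{-1}dx_{1}\leq C\epsilon^{-1/2}$, with bounded contribution outside $\Omega_{R}$. For the lower bound I would use the reverse triangle inequality $\|e(v_{1}^{\alpha})\|_{L^{2}}\geq\|e(\bar{u}_{1}^{\alpha})\|_{L^{2}}-\|e(w_{1}^{\alpha})\|_{L^{2}}$: the second term is bounded by $C$ through \eqref{energy_w}, while the dominant derivative $\partial_{x_{2}}\bar{u}$ appears directly as a component of $e(\bar{u}_{1}^{\alpha})$ (the $e_{12}$-entry when $\alpha=1$, the $e_{22}$-entry when $\alpha=2$), so $\|e(\bar{u}_{1}^{\alpha})\|_{L^{2}}^{2}\geq c\int_{\Omega_{R}}|\partial_{x_{2}}\bar{u}|^{2}dx\geq c\epsilon^{-1/2}$, yielding $a_{11}^{\alpha\alpha}\geq c\epsilon^{-1/2}$ for small $\epsilon$. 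For $a_{11}^{33}$ the upper bound is $a_{11}^{33}\leq 2I_{\infty}[\bar{u}_{1}^{3}]\leq C$ by Lemma \ref{lemvi3}; the lower bound follows by applying Lemma \ref{lemmaD} to the closed subspace of $H^{1}(\widetilde{\Omega};\mathbb{R}^{2})$ of functions vanishing on $\partial\Omega$ (which intersects $\Psi$ only in $0$), obtaining $\|v_{1}^{3}\|_{H^{1}(\widetilde{\Omega})}\leq C\|e(v_{1}^{3})\|_{L^{2}}$, together with the trace lower bound $\|v_{1}^{3}\|_{H^{1}(\widetilde{\Omega})}\geq c\|\psi^{3}\|_{H^{1/2}(\partial D_{1})}\geq c>0$.

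For the off-diagonal entries, I expand via the decomposition on both factors. Cauchy--Schwarz combined with $\|\nabla\bar{u}_{1}^{\alpha}\|_{L^{2}}\leq C\epsilon^{-1/4}$ and $\|\nabla w_{1}^{\alpha}\|_{L^{2}}\leq C$ controls the $w$-containing cross terms of $|a_{11}^{12}|$ by $C\epsilon^{-1/4}$; the pure $\bar{u}$--$\bar{u}$ contribution simplifies to $(\lambda+\mu)\int\partial_{x_{1}}\bar{u}\,\partial_{x_{2}}\bar{u}\,dx$ which is $O(\log\epsilon^{-1})$ and hence absorbed. For $|a_{11}^{\alpha 3}|$ the pieces containing $w_{1}^{\alpha}$ or $w_{1}^{3}$ are handled using $\|\nabla w_{1}^{\alpha}\|_{L^{2}}\leq C$, $\|\nabla v_{1}^{3}\|_{L^{2}}\leq C$, $\|\nabla w_{1}^{3}\|_{L^{\infty}}\leq C$, and $\|\nabla\bar{u}_{1}^{\alpha}\|_{L^{1}(\widetilde{\Omega})}\leq C$; the remaining pure $\bar{u}_{1}^{\alpha}$--$\bar{u}_{1}^{3}$ integral is $O(1)$ by direct computation using $|x_{2}|\leq C(\epsilon+|x_{1}|^{2})$ in $\Omega_{R}$. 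The determinant estimate then follows by expanding $\det a_{11}$ cofactor-wise: the leading term $a_{11}^{11}a_{11}^{22}a_{11}^{33}$ is of order $\epsilon^{-1}$, each correction (e.g.\ $a_{11}^{33}(a_{11}^{12})^{2}\leq C\epsilon^{-1/2}$) is at most $O(\epsilon^{-1/2})$, and positive definiteness fixes the sign.

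The main obstacle is the lower bound on $a_{11}^{\alpha\alpha}$ for $\alpha=1,2$: one must verify that the dominant $1/(\epsilon+|x_{1}|^{2})$ singularity of $\partial_{x_{2}}\bar{u}$ is retained by the \emph{symmetric} gradient $e(\bar{u}_{1}^{\alpha})$, not merely by $\nabla\bar{u}_{1}^{\alpha}$; this works because the ansatz $\bar{u}_{1}^{\alpha}=(\bar{u},0)^{T}$ or $(0,\bar{u})^{T}$ places $\partial_{x_{2}}\bar{u}$ into a position ($e_{12}$ or $e_{22}$) where no cancelling second-component derivative can arise.
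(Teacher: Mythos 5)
Your overall strategy (decompose $v_1^\alpha=\bar u_1^\alpha+w_1^\alpha$, use \eqref{energy_w}, Cauchy--Schwarz for the mixed terms, explicit computation for the $\bar u$--$\bar u$ terms, cofactor expansion for $\det a_{11}$) is the paper's strategy, and most of your steps are sound, but two of them have genuine gaps.

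First, the lower bound in \eqref{a11_33}. You invoke Lemma \ref{lemmaD} on the subspace of $H^1(\widetilde\Omega;\mathbb{R}^2)$ vanishing on $\partial\Omega$, but the constant in Lemma \ref{lemmaD} depends on the domain, and here the domain is $\widetilde\Omega$, which varies with $\epsilon$ and degenerates (the gap between $D_1$ and $D_2$ closes) as $\epsilon\to 0$; the same objection applies to the trace constant in your bound $\|v_1^3\|_{H^1(\widetilde\Omega)}\ge c\|\psi^3\|_{H^{1/2}(\partial D_1)}$. The $\epsilon$-uniformity of such Korn/trace constants on the degenerating domain is exactly the delicate point, and it is why the paper does not argue on all of $\widetilde\Omega$: it proves the Claim \eqref{a11_33_claim}, an $\epsilon$-independent Korn inequality on the fixed-size region $\Omega_R\setminus\Omega_{R/2}$ away from the neck, by a contradiction/compactness argument transported to the limiting domain via diffeomorphisms $\phi_\epsilon$ with $\nabla\phi_\epsilon\to I$, and then gets $a_{11}^{33}\ge\frac1C$ from the energy of $v_1^3$ there (where $v_1^3$ jumps from $0$ on $\Gamma^-$ to $\psi^3$ on $\Gamma^+$). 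Without a proof of uniformity your step is an assertion, not a consequence of Lemma \ref{lemmaD}.

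Second, the bound \eqref{a11_alpha3} for $\alpha=2$. The pure $\bar u_1^2$--$\bar u_1^3$ integrand equals $(\lambda+\mu)x_2\partial_{x_1}\bar u\,\partial_{x_2}\bar u-\mu x_1(\partial_{x_1}\bar u)^2-(\lambda+2\mu)x_1(\partial_{x_2}\bar u)^2$, and the last term is \emph{not} $O(1)$ by the crude bound $|x_2|\le C(\epsilon+|x_1|^2)$: using \eqref{nablau_bar} and the strip height, $\int_{\Omega_{R/2}}|x_1|(\partial_{x_2}\bar u)^2dx\sim\int_{|x_1|<R/2}\frac{|x_1|}{\epsilon+|x_1|^2}\,dx_1\sim|\ln\epsilon|$. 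To get the claimed $|a_{11}^{23}|\le C$ one must exploit the near-oddness in $x_1$, as the paper does: write the term as $\int x_1\bigl(\frac{1}{\epsilon+h_1(x_1)-h_2(x_1)}-\frac{1}{\epsilon+\frac12(h_1''(0)-h_2''(0))x_1^2}\bigr)dx_1$ (the subtracted even part integrates to zero) and use the $C^{2,\gamma}$ bound $|h_1-h_2-\frac12(h_1''(0)-h_2''(0))x_1^2|\le C|x_1|^{2+\gamma}$ to bound the remainder by $C\int|x_1|^{\gamma-1}dx_1=O(1)$. Your case $\alpha=1$ is fine by direct estimates, and your determinant expansion is fine once all entry bounds are in place; the positive definiteness argument and the bounds \eqref{a11_1122}, \eqref{a11_12} (including the energy-minimization route to the upper bound of $a_{11}^{\alpha\alpha}$, which is a harmless variant of the paper's use of \eqref{mainev1}) are correct.
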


\begin{proof}

{\bf STEP 1.} Proof of \eqref{a11_1122} and \eqref{a11_33}.

For any
$\xi=(\xi^{(1)},\xi^{(2)},\xi^{(3)})^{T}\neq0$,
$$
\xi^{T}a_{11}\xi=\int_{\widetilde{\Omega}}\left(\mathbb{C}^{0}e\left(\xi^{(\alpha)}v_{1}^{\alpha}\right),
e\left(\xi^{(\beta)}v_{1}^{\beta}\right)\right)dx
\geq\frac{1}{C}\int_{\widetilde{\Omega}}\left|e\left(\xi^{(\alpha)}v_{1}
^{\alpha}\right)\right|^{2}dx>0.
$$
In the last inequality we have used the fact that $e\left(\xi^{(\alpha)}v_{1}^{\alpha}\right)$ is not identically zero.
Indeed if $e\left(\xi^{(\alpha)}v_{1}^{\alpha}\right)=0$, then $\xi^{(\alpha)}v_{1}^{\alpha}=a\psi^{1}+b\psi^{2}+c\psi^{3}$
in $\widetilde \Omega$
for some constants $a,b$ and $c$. On the other hand, $\xi^{(\alpha)}v_{1}^{\alpha}=0$ on $\partial{D}_{2}$, and $\psi^{1}\big|_{\partial{D}_{2}}$, $\psi^{2}\big|_{\partial{D}_{2}}$ and $\psi^{3}\big|_{\partial{D}_{2}}$ are clearly independent. This implies that $a=b=c=0$. Thus on $\partial{D}_{1}$, $\xi^{(\alpha)}v_{1}^{\alpha}=0$, violating the linear independence of $\psi^{1}\big|_{\partial{D}_{1}}$, $\psi^{2}\big|_{\partial{D}_{1}}$ and $\psi^{3}\big|_{\partial{D}_{1}}$.  We have proved
 that $a_{11}$ is positive definite.

By   \eqref{coeff4_strongelyconvex},
\eqref{coeff2} and \eqref{mainev1},
$$a_{11}^{\alpha\alpha}=\int_{\widetilde{\Omega}}\left(\mathbb{C}^{0}e\left(v_{1}^{\alpha}\right),e\left(v_{1}^{\alpha}\right)\right)dx
\leq\,C\int_{\widetilde{\Omega}}\left|\nabla{v}_{1}^{\alpha}\right|^{2}dx\leq\frac{C}{\sqrt{\epsilon}},\quad\,\alpha=1,2.$$

With   \eqref{def_w}, we have, by  \eqref{energy_w},
\begin{eqnarray*}
a_{11}^{11}&=&
\int_{\widetilde{\Omega}}\left(\mathbb{C}^{0}e\left(v_{1}^{1}\right),
e\left(v_{1}^{1}\right)\right)dx
\geq\frac{1}{C}\int_{\widetilde{\Omega}}\left|e\left(v_{1}^{1}\right)
\right|^{2}dx\\
&\geq&
\frac{1}{2C}\int_{\widetilde{\Omega}}\left|e\left(\bar u_{1}^{1}\right)
\right|^{2}dx
-C \int_{\widetilde{\Omega}}\left|e\left(w_{1}^{1}\right)
\right|^{2}dx\\
&\geq&
\frac 1{2C}
\int_{\widetilde{\Omega}}\left|e\left(\bar u_{1}^{1}\right)
\right|^{2}dx
-C.
\end{eqnarray*}
Since
\begin{equation}\label{inequality}
\left|e\left(\bar u_{1}^{1}\right)
\right|^{2}
\ge \frac 14
|\partial_{x_2}\bar u|^2,
\end{equation}
we have
\begin{eqnarray*}
\int_{\widetilde{\Omega}}
\left|e\left(\bar u_{1}^{1}\right)
\right|^{2}
dx
&\geq&
\frac 14
\int_{\widetilde\Omega} |\partial_{x_2}\bar u|^2 dx
\ge
\frac 14
 \int_{\Omega_R} \frac{dx}
{  (\epsilon+h_1(x_1)-h_2(x_1))^2}\\
&\geq&
\frac 1C \int_{\Omega_R}
\frac {dx}{   (\epsilon+|x_1|^2)^2 }\ge \frac 1{  C\sqrt{\epsilon}  }.
\end{eqnarray*}
Thus
$$
a_{11}^{11}
\geq\frac{1}{C\sqrt{\epsilon}}.
$$
Similarly, we have
$$a_{11}^{22}\geq\frac{1}{C\sqrt{\epsilon}}.$$
Estimate \eqref{a11_1122} is proved.

By Lemma \ref{lemvi3},
\begin{equation}\label{a11_33_proof}
a_{11}^{33}=\int_{\widetilde{\Omega}}\left(\mathbb{C}^{0}e\left(v_{1}^{3}\right),e\left(v_{1}^{3}\right)\right)dx\leq\,C.
\end{equation}
Claim: There exists $C$ which is independent of $\epsilon$ such that for any $v\in{H}^{1}(\Omega_{R}\setminus\Omega_{R/2})$ satisfying $v=0$ on $\Gamma^{1}_{R}\setminus\Gamma^{-}_{R/2}$, it holds
\begin{equation}\label{a11_33_claim}
\left\|\nabla{v}\right\|_{L^{2}\left(\Omega_{R}\setminus\Omega_{R/2}\right)}
\leq\,C\left\|e(v)\right\|_{L^{2}\left(\Omega_{R}\setminus\Omega_{R/2}\right)}.
\end{equation}
Proof of the claim. Suppose the contrary, along a sequence of $\epsilon_{j}\rightarrow0^{+}$, there exist $\{v_{j}\}_{j=1}^{\infty}\subset{H}^{1}\left(\Omega_{R}\setminus\Omega_{R/2}\right)$ (we still omit the superscript $\epsilon_{j}$) satisfying $v_{j}=0$ on $\Gamma^{-}_{R}\setminus\Gamma^{-}_{R/2}$, and
\begin{equation}\label{a11_33_claim1}
\left\|\nabla{v}_{j}\right\|_{L^{2}\left(\Omega_{R}\setminus\Omega_{R/2}\right)}
\geq\,j\left\|e(v_{j})\right\|_{L^{2}\left(\Omega_{R}\setminus\Omega_{R/2}\right)}.
\end{equation}
By Lemma \ref{lemmaC}, we have
\begin{equation}\label{a11_33_claim2}
\left\|\nabla{v}_{j}\right\|_{L^{2}\left(\Omega_{R}\setminus\Omega_{R/2}\right)}
\leq\,C\left(\left\|e(v_{j})\right\|_{L^{2}\left(\Omega_{R}\setminus\Omega_{R/2}\right)}
+\left\|v_{j}\right\|_{L^{2}\left(\Omega_{R}\setminus\Omega_{R/2}\right)}\right),
\end{equation}
where $C$ is independent of $j$. Replacing $v_{j}$ by $\frac{v_{j}}{\|v_{j}\|_{L^{2}\left(\Omega_{R}\setminus\Omega_{R/2}\right)}}$, we may assume without loss of generality that
\begin{equation}\label{a11_33_claim3}
\|v_{j}\|_{L^{2}\left(\Omega_{R}\setminus\Omega_{R/2}\right)}=1.
\end{equation}
It follows from \eqref{a11_33_claim1}, \eqref{a11_33_claim2} and \eqref{a11_33_claim3} that
\begin{equation}\label{a11_33_claim4}
\lim_{j\rightarrow\infty}\|e(v_{j})\|_{L^{2}\left(\Omega_{R}\setminus\Omega_{R/2}\right)}=0,
\end{equation}
\begin{equation}\label{a11_33_claim5}
\|v_{j}\|_{H^{1}\left(\Omega_{R}\setminus\Omega_{R/2}\right)}\leq\,C.
\end{equation}
Let
$$\Omega_{r}^{*}:=\left\{x\in\mathbb{R}^{2}~\big|
~h_{2}(x_{1})<x_{2}<h_{1}(x_{1}),|x_{1}|<r\right\}$$
and
$$(\Gamma^{*})_{r}^{-}:=\left\{x\in\mathbb{R}^{2}~\big|
~x_{2}=h_{2}(x_{1}),|x_{1}|<r\right\}$$
denote the limits of $\Omega_{r}$ and $\Gamma_{r}^{-}$ as $\epsilon\rightarrow0$.

We can easily construct a $C^{1}$ diffeomorphism $\phi_{\epsilon}:\Omega_{R}\setminus\Omega_{R/2}\rightarrow\Omega^{*}_{R}\setminus\Omega^{*}_{R/2}$ satisfying $\phi_{\epsilon}(\Gamma_{R}^{-}\setminus\Gamma_{R/2}^{-})
=(\Gamma^{*})_{R}^{-}\setminus(\Gamma^{*})_{R/2}^{-}$ and
\begin{equation}\label{D-1}
\|\nabla\phi_{\epsilon}-I\|_{C^{0}(\Omega_{R}\setminus\Omega_{R/2})},
\|\nabla(\phi_{\epsilon})^{-1}-I\|_{C^{0}(\Omega^{*}_{R}\setminus\Omega^{*}_{R/2})}\rightarrow0,
\quad\mbox{as}~\epsilon\rightarrow0^{+},
\end{equation}
where $I$ denotes the identity matrix. Let $\widehat{v}_{j}:=v_{j}\circ(\phi_{\epsilon_{j}})^{-1}$.
We deduce from \eqref{a11_33_claim4} and \eqref{a11_33_claim5} that, along a subsequence, $\widehat{v}_{j}\rightharpoonup{v}^{*}$ weakly in $H^{1}\left(\Omega^{*}_{R}\setminus\Omega^{*}_{R/2}\right)$, where $v^{*}$ satisfies
\begin{equation}\label{a11_33_claim6}
e(v^{*})=0,\quad\mbox{in}~\Omega^{*}_{R}\setminus\Omega^{*}_{R/2},
\end{equation}
and
\begin{equation}\label{a11_33_claim7}
v^{*}=0,\quad\mbox{on}~(\Gamma^{*})_{R}^{-}\setminus(\Gamma^{*})_{R/2}^{-}.
\end{equation}
By \eqref{a11_33_claim6}, $v^{*}\in\Psi$ in $\Omega^{*}_{R}\setminus\Omega^{*}_{R/2}$. Thus, in view of \eqref{a11_33_claim7}, $v^{*}\equiv0$ in $\Omega^{*}_{R}\setminus\Omega^{*}_{R/2}$. By the compact embedding theorem of $H^{1}(\Omega^{*}_{R}\setminus\Omega^{*}_{R/2})$ to $L^{2}(\Omega^{*}_{R}\setminus\Omega^{*}_{R/2})$,
$$\|\widehat{v}_{j}\|_{L^{2}\left(\Omega^{*}_{R}\setminus\Omega^{*}_{R/2}\right)}\rightarrow
\|v^{*}\|_{L^{2}\left(\Omega^{*}_{R}\setminus\Omega^{*}_{R/2}\right)}=0,$$
By \eqref{a11_33_claim3} and \eqref{D-1},
$$\|\widehat{v}_{j}\|_{L^{2}\left(\Omega^{*}_{R}\setminus\Omega^{*}_{R/2}\right)}\rightarrow1.$$
These lead to a contradiction. The claim has been proved.

With the claim \eqref{a11_33_claim}, we obtain from \eqref{coeff4_strongelyconvex} that
$$a_{11}^{33}=\int_{\widetilde{\Omega}}\left(\mathbb{C}^{0}e\left(v_{1}^{3}\right),e\left(v_{1}^{3}\right)\right)dx
\geq\frac{1}{C}\int_{\Omega_{R}\setminus\Omega_{R/2}}|e\left(v_{1}^{3}\right)|^{2}dx
\geq\frac{1}{C}\int_{\Omega_{R}\setminus\Omega_{R/2}}|\nabla{v}_{1}^{3}|^{2}dx
\geq\frac{1}{C}.
$$
Combining with \eqref{a11_33_proof}, estimate \eqref{a11_33} is proved.

{\bf STEP 2.} Proof of \eqref{a11_12}.

Notice that
$$a_{11}^{12}=a_{11}^{21}=\int_{\widetilde{\Omega}}\left(\mathbb{C}^{0}e\left(v_{1}^{1}\right),e(v_{1}^{2})\right)dx
=\int_{\widetilde{\Omega}}\left(\mathbb{C}^{0}\nabla{v}_{1}^{1},\nabla{v}_{1}^{2}\right)dx.$$
With \eqref{def_w}, we have
\begin{align}\label{v11_v22_n2}
\int_{\Omega_{R/2}}\left(\mathbb{C}^{0}\nabla{v}_{1}^{1},\nabla{v}_{1}^{2}\right)dx
=&\int_{\Omega_{R/2}}\left(\mathbb{C}^{0}\nabla\left(\bar{u}_{1}^{1}+w_{1}^{1}\right),\nabla\left(\bar{u}_{1}^{2}+w_{1}^{2}\right)\right)dx\nonumber\\
=&\int_{\Omega_{R/2}}\left(\mathbb{C}^{0}\nabla\bar{u}_{1}^{1},\nabla\bar{u}_{1}^{2}\right)dx
+\int_{\Omega_{R/2}}\left(\mathbb{C}^{0}\nabla\bar{u}_{1}^{1},\nabla{w}_{1}^{2}\right)dx\nonumber\\
&+\int_{\Omega_{R/2}}\left(\mathbb{C}^{0}\nabla\bar{u}_{1}^{2},\nabla{w}_{1}^{1}\right)dx
+\int_{\Omega_{R/2}}\left(\mathbb{C}^{0}\nabla{w}_{1}^{1},\nabla{w}_{1}^{2}\right)dx.
\end{align}
By the definition $\bar{u}_{1}^{1}=(\bar{u},0)^{T}$ and $\bar{u}_{1}^{2}=(0,\bar{u})^{T}$, we have
\begin{equation}\label{nablau_bar1112}
\nabla\bar{u}_{1}^{1}
=\begin{pmatrix}
                      \partial_{x_{1}}\bar{u} & \partial_{x_{2}}\bar{u} \\
                      0 & 0 \\
                    \end{pmatrix},
\quad\mbox{and}\quad\nabla\bar{u}_{1}^{2}=\begin{pmatrix}
                      0 & 0 \\
                      \partial_{x_{1}}\bar{u} & \partial_{x_{2}}\bar{u} \\
                    \end{pmatrix}.
\end{equation}
By \eqref{energy_w},
$$\left|\int_{\Omega_{R/2}}\left(\mathbb{C}^{0}\nabla{w}_{1}^{1},\nabla{w}_{1}^{2}\right)dx\right|
\leq\,C\left(\int_{\Omega_{R/2}}\left|\nabla{w}_{1}^{1}\right|^{2}dx\right)^{1/2}\left(\int_{\Omega_{R/2}}\left|\nabla{w}_{1}^{2}\right|^{2}dx\right)^{1/2}\leq\,C,$$
and
\begin{equation}\label{u11w12}
\left|\int_{\Omega_{R/2}}\left(\mathbb{C}^{0}\nabla\bar{u}_{1}^{1},\nabla{w}_{1}^{2}\right)dx\right|\leq\,C
\left(\int_{\Omega_{R/2}}\left|\nabla\bar{u}_{1}^{1}\right|^{2}dx\right)^{1/2}
\left(\int_{\Omega_{R/2}}\left|\nabla{w}_{1}^{2}\right|^{2}dx\right)^{1/2}\leq\,\frac{C}{\epsilon^{1/4}}.
\end{equation}
Similarly,
\begin{equation}\label{u12w11}
\left|\int_{\Omega_{R/2}}\left(\mathbb{C}^{0}\nabla\bar{u}_{1}^{2},\nabla{w}_{1}^{1}\right)dx\right|\leq\,\frac{C}{\epsilon^{1/4}}.
\end{equation}
On the other hand,
\begin{align*}
\left(\mathbb{C}^{0}\nabla\bar{u}_{1}^{1},\nabla\bar{u}_{1}^{2}\right)
&=\begin{pmatrix}
                      (\lambda+2\mu)\partial_{x_{1}}\bar{u} & \mu\partial_{x_{2}}\bar{u} \\
                      \mu\partial_{x_{2}}\bar{u} & \lambda\partial_{x_{1}}\bar{u} \\
                    \end{pmatrix}:\begin{pmatrix}
                      0 & 0 \\
                      \partial_{x_{1}}\bar{u} & \partial_{x_{2}}\bar{u} \\
                    \end{pmatrix}
                    =(\lambda+\mu)\partial_{x_{1}}\bar{u}\partial_{x_{2}}\bar{u}.
\end{align*}
Thus,
$$\left|\int_{\Omega_{R/2}}\left(\mathbb{C}^{0}\nabla\bar{u}_{1}^{1},\nabla\bar{u}_{1}^{2}\right)dx\right|
\leq\,C\int_{\Omega_{R/2}}|\partial_{x_{1}}\bar{u}||\partial_{x_{2}}\bar{u}|dx
\leq\,C\int_{\Omega_{R/2}}\frac{|x_{1}|dx}{(\epsilon+|x_{1}|^{2})^{2}}\leq\,C|\ln\epsilon|.$$
Substituting these estimates above into \eqref{v11_v22_n2}, and using \eqref{nablav1}, we have
$$\left|a_{11}^{12}\right|=\left|a_{11}^{21}\right|
=\left|\int_{\widetilde{\Omega}}\left(\mathbb{C}^{0}\nabla{v}_{1}^{1},\nabla{v}_{1}^{2}\right)dx\right|\leq
\left|\int_{\Omega_{R/2}}\left(\mathbb{C}^{0}\nabla{v}_{1}^{1},\nabla{v}_{1}^{2}\right)dx\right|+C
\leq\frac{C}{\epsilon^{1/4}}.$$
The proof of \eqref{a11_12} is finished.

{\bf STEP 3.} Proof of \eqref{a11_alpha3}.

$$a_{11}^{\alpha3}=a_{11}^{3\alpha}=\int_{\widetilde{\Omega}}\left(\mathbb{C}^{0}e\left(v_{1}^{\alpha}\right),e(v_{1}^{3})\right)dx
=\int_{\widetilde{\Omega}}\left(\mathbb{C}^{0}\nabla{v}_{1}^{\alpha},\nabla{v}_{1}^{3}\right)dx,\quad\alpha=1,2.$$
Similarly to the above, using \eqref{energy_w} and
\eqref{XYZ}, we have, for $\alpha=1$,
\begin{align*}
a_{11}^{13}=&\int_{\Omega_{R/2}}\left(\mathbb{C}^{0}\nabla{v}_{1}^{1},\nabla{v}_{1}^{3}\right)dx+O(1)\\
=&\int_{\Omega_{R/2}}\left(\mathbb{C}^{0}\nabla\bar{u}_{1}^{1},\nabla\bar{u}_{1}^{3}\right)dx
+\int_{\Omega_{R/2}}\left(\mathbb{C}^{0}\nabla\bar{u}_{1}^{1},\nabla{w}_{1}^{3}\right)dx\nonumber\\
&+\int_{\Omega_{R/2}}\left(\mathbb{C}^{0}\nabla\bar{u}_{1}^{3},\nabla{w}_{1}^{1}\right)dx
+\int_{\Omega_{R/2}}\left(\mathbb{C}^{0}\nabla{w}_{1}^{1},\nabla{w}_{1}^{3}\right)dx+O(1)\\
=&\int_{\Omega_{R/2}}\left(\mathbb{C}^{0}\nabla\bar{u}_{1}^{1},\nabla\bar{u}_{1}^{3}\right)dx
+\int_{\Omega_{R/2}}\left(\mathbb{C}^{0}\nabla\bar{u}_{1}^{3},\nabla{w}_{1}^{1}\right)dx
+\int_{\Omega_{R/2}}\left(\mathbb{C}^{0}\nabla\bar{u}_{1}^{1},\nabla{w}_{1}^{3}\right)dx
+O(1)\\
=&:I+II+III+O(1).
\end{align*}
By the definition of $\bar{u}_{1}^{3}=(x_{2}\bar{u},-x_{1}\bar{u})^{T}$, we have
$$\nabla\bar{u}_{1}^{3}=\begin{pmatrix}
                      x_{2}\partial_{x_{1}}\bar{u} & \bar{u}+x_{2}\partial_{x_{2}}\bar{u} \\
                      -\bar{u}-x_{1}\partial_{x_{1}}\bar{u} & -x_{1}\partial_{x_{2}}\bar{u}
                    \end{pmatrix}.$$
Then
\begin{align*}
\left(\mathbb{C}^{0}\nabla\bar{u}_{1}^{1},\nabla\bar{u}_{1}^{3}\right)
&=\begin{pmatrix}
                      (\lambda+2\mu)\partial_{x_{1}}\bar{u} & \mu\partial_{x_{2}}\bar{u} \\
                      \mu\partial_{x_{2}}\bar{u} & \lambda\partial_{x_{1}}\bar{u} \\
                    \end{pmatrix}:\begin{pmatrix}
                      x_{2}\partial_{x_{1}}\bar{u} & \bar{u}+x_{2}\partial_{x_{2}}\bar{u} \\
                      -\bar{u}-x_{1}\partial_{x_{1}}\bar{u} & -x_{1}\partial_{x_{2}}\bar{u} \\
                    \end{pmatrix}\\
&=(\lambda+2\mu)x_{2}\left(\partial_{x_{1}}\bar{u}\right)^{2}+\mu\,x_{2}\left(\partial_{x_{2}}\bar{u}\right)^{2}
-(\lambda+\mu)x_{1}\partial_{x_{1}}\bar{u}\partial_{x_{2}}\bar{u}.
\end{align*}
Hence, by \eqref{nablau_bar},
\begin{align*}
&|I|=\left|\int_{\Omega_{R/2}}\left(\mathbb{C}^{0}\nabla\bar{u}_{1}^{1},\nabla\bar{u}_{1}^{3}\right)dx\right|\\
&\leq\,C\left(\int_{\Omega_{R/2}}\frac{|x_{2}||x_{1}|^{2}}{(\epsilon+|x_{1}|^{2})^{2}}\,dx+
\int_{\Omega_{R/2}}\frac{|x_{2}|}{(\epsilon+|x_{1}|^{2})^{2}}\,dx
+\int_{\Omega_{R/2}}\frac{|x_{1}|^{2}}{(\epsilon+|x_{1}|^{2})^{2}}\,dx\right)\\
&\leq\,C.
\end{align*}
By \eqref{energy_w} and \eqref{nabla_baru13_in},
\begin{align*}
|II|&=\left|\int_{\Omega_{R/2}}\left(\mathbb{C}^{0}\nabla\bar{u}_{1}^{3},\nabla{w}_{1}^{1}\right)dx\right|
\leq
C \left(\int_{\Omega_{R/2}}\left|\nabla\bar{u}_{1}^{3}\right|^{2}dx\right)^{1/2}\left(\int_{\Omega_{R/2}}\left|\nabla{w}_{1}^{1}\right|^{2}dx\right)^{1/2}
\leq\,C.
\end{align*}
While, by \eqref{nabla_w13},
\begin{align*}
|III|=
\left| \int_{\Omega_{R/2}}\left(\mathbb{C}^{0}\nabla\bar{u}_{1}^{1},
\nabla{w}_{1}^{3}\right)dx
\right|
\leq\,C\int_{\Omega_{R/2}}\left|\nabla\bar{u}_{1}^{1}\right|dx\leq\,C.
\end{align*}
Therefore
$$\left|a_{11}^{13}\right|\leq\,C.$$

Similarly, using \eqref{energy_w} and  \eqref{XYZ},
\begin{align*}
a_{11}^{23}=&\int_{\Omega_{R/2}}\left(\mathbb{C}^{0}\nabla{v}_{1}^{2},\nabla{v}_{1}^{3}\right)dx+O(1)\\
=&\int_{\Omega_{R/2}}\left(\mathbb{C}^{0}\nabla\bar{u}_{1}^{2},\nabla\bar{u}_{1}^{3}\right)dx
+\int_{\Omega_{R/2}}\left(\mathbb{C}^{0}\nabla\bar{u}_{1}^{2},\nabla{w}_{1}^{3}\right)dx\nonumber\\
&+\int_{\Omega_{R/2}}\left(\mathbb{C}^{0}\nabla\bar{u}_{1}^{3},\nabla{w}_{1}^{2}\right)dx
+\int_{\Omega_{R/2}}\left(\mathbb{C}^{0}\nabla{w}_{1}^{2},\nabla{w}_{1}^{3}\right)dx+O(1)\\
=&\int_{\Omega_{R/2}}\left(\mathbb{C}^{0}\nabla\bar{u}_{1}^{2},\nabla\bar{u}_{1}^{3}\right)dx
+\int_{\Omega_{R/2}}\left(\mathbb{C}^{0}\nabla\bar{u}_{1}^{2},\nabla{w}_{1}^{3}\right)dx+O(1).
\end{align*}
By the definition $\bar{u}_{1}^{2}$ and $\bar{u}_{1}^{3}$, we have
\begin{align*}
\left(\mathbb{C}^{0}\nabla\bar{u}_{1}^{2},\nabla\bar{u}_{1}^{3}\right)
&=\begin{pmatrix}
                      \lambda\partial_{x_{2}}\bar{u} & \mu\partial_{x_{1}}\bar{u} \\
                      \mu\partial_{x_{1}}\bar{u} & (\lambda+2\mu)\partial_{x_{2}}\bar{u} \\
                    \end{pmatrix}:\begin{pmatrix}
                      x_{2}\partial_{x_{1}}\bar{u} & \bar{u}+x_{2}\partial_{x_{2}}\bar{u} \\
                      -\bar{u}-x_{1}\partial_{x_{1}}\bar{u} & -x_{1}\partial_{x_{2}}\bar{u} \\
                    \end{pmatrix}\\
&=(\lambda+\mu)x_{2}\partial_{x_{1}}\bar{u}\partial_{x_{2}}\bar{u}-\mu\,x_{1}(\partial_{x_{1}}\bar{u})^{2}
-(\lambda+2\mu)x_{1}(\partial_{x_{2}}\bar{u})^{2}.
\end{align*}
Hence, using \eqref{nablau_bar}, we have
\begin{align*}
&\int_{\Omega_{R/2}}\left(\mathbb{C}^{0}\nabla\bar{u}_{1}^{2},\nabla\bar{u}_{1}^{3}\right)dx\\
&=-(\lambda+2\mu)\int_{\Omega_{R/2}}x_{1}(\partial_{x_{2}}\bar{u})^{2}dx
+O(1)\\
&=-(\lambda+2\mu)\int_{|x_1|<R/2}x_{1}\left(\frac{1}{\epsilon+h_{1}(x_{1})-h_{2}(x_{1})}-\frac{1}{\epsilon+\frac{1}{2}(h''_{1}(0)-h''_{2}(0))x_{1}^{2}}
\right)dx_{1}+O(1)\\
&=O(1).
\end{align*}
Therefore
$$\left|a_{11}^{23}\right|\leq\,C.$$
Lemma \ref{lem_a_11} is proved.
\end{proof}

\begin{proof}[Proof of Proposition \ref{prop_C1-C2}]
By \eqref{aaa1}, Lemma \ref{lem_ap} and Lemma \ref{lem_a_11},
$$
C_{1}^{1}-C_{2}^{1}=
\frac{1}{\det{a_{11}}}\left(\left(p^{1}a_{11}^{22}a_{11}^{33}-p^{3}a_{11}^{22}a_{11}^{13}\right)+O(\frac{1}{\epsilon^{1/4}})\right).
$$
Therefore
$$\left|C_{1}^{1}-C_{2}^{1}\right|\leq\,C\sqrt{\epsilon}.$$
Similarly, using \eqref{aaa2},
$$
 C_{1}^{2}-C_{2}^{2}
=\frac{1}{\det{a_{11}}}\left(\left(p^{2}a_{11}^{11}a_{11}^{33}-p^{3}a_{11}^{11}a_{11}^{23}\right)+O(\frac{1}{\epsilon^{1/4}})\right).
$$
Therefore
$$\left|C_{1}^{2}-C_{2}^{2}\right|\leq\,C\sqrt{\epsilon}.$$
The proof is completed.
\end{proof}

\begin{proof}[Proof of Proposition \ref{prop_gradient}]
Estimates \eqref{mainev3} and \eqref{mainev1+23}  have been proved
in Lemma \ref{lem_v3_v1+v2}; estimate \eqref{mainev1}
has been proved in Corollary \ref{cor1};
estimate \eqref{mainevi3} has been proved in Lemma \ref{lemvi3} and Lemma
\ref{lemma3.6}; estimate \eqref{maineC}  has been proved in Lemma
\ref{lem_C1C2bound}; and
estimate \eqref{maineC1-C2}  has been proved in
Proposition \ref{prop_C1-C2}.
The proof of Proposition \ref{prop_gradient} is completed.
\end{proof}

\section{More general $D_{1}$ and $D_{2}$}

As mentioned in the introduction,
the strict convexity assumption
 on $\partial{D}_{1}$ and $\partial{D}_{2}$ can be weakened.
In fact, our proof of Theorem \ref{mainthm1} applies,
 with minor modification, to more general situations.

In $\mathbb{R}^{2}$, under the same assumptions in the beginning of Section \ref{sec_gradient} except for the strict convexity condition, $\partial{D}_{i}$ near $P_{i}$ can be represented by the graphs of $x_{2}=\frac{\epsilon}{2}+h_{1}(x_{1})$, and $x_{2}=-\frac{\epsilon}{2}+h_{2}(x_{1})$, for $|x_{1}|<2R$. We assume that  $h_{1},h_{2}\in{C}^{2}([-2R,2R])$ and \eqref{h1h20} still holds. Instead of the
convexity assumption,
 we assume that
\begin{equation}\label{h1h24*}
\Lambda_{0}|x_{1}|^{m}\leq\,h_{1}(x_1)-h_{2}(x_1)\leq\Lambda_{1}|x_{1}|^{m},\quad\mbox{for}~~|x_{1}|<2R,
\end{equation}
and
\begin{equation}\label{h1h25*}
|h'_{i}(x_1)|\leq\,C|x_{1}|^{m-1},\quad|h''_{i}(x_1)|\leq\,C|x_{1}|^{m-2},\quad\,i=1,2,\quad\mbox{for}~~|x_{1}|<2R,
\end{equation}
for some $\epsilon-$independent constants $0<\Lambda_{0}<\Lambda_{1}$, and $m\geq2$. Define $\delta:=\delta(z_{1})$ as \eqref{delta}.
Clearly,
\begin{equation}\label{delta_z1*}
\frac{1}{C}(\epsilon+|z_{1}|^{m})\leq\delta(z_{1})\leq\,C(\epsilon+|z_{1}|^{m}).
\end{equation}
Then
\begin{theorem}\label{mainthm1*}
Under the above assumptions with $m\geq2$, let $u\in{H}^{1}(\Omega;\mathbb{R}^{2})\cap{C}^{1}(\overline{\widetilde{\Omega}};\mathbb{R}^{2})$ be
a solution to \eqref{mainequation}. Then for $0<\epsilon<1$,
 we have
\begin{equation}\label{mainestimates*}
|\nabla{u}(x)|\leq
\left\{
\begin{array}{ll}
\displaystyle{
C\frac{\epsilon^{1-\frac{1}{m}}+\mathrm{dist}(x,\overline{P_{1}P_{2}})}{\epsilon+\mathrm{dist}^{m}(x,\overline{P_{1}P_{2}})}\|\varphi\|_{C^{1,\gamma}
(\partial{\Omega};\mathbb{R}^{2})},
}
&\quad\,x\in
\widetilde \Omega,\\
&\\
C\|\varphi\|_{C^{1,\gamma}
(\partial{\Omega};\mathbb{R}^{2})},&\quad x\in D_1\cup D_2.
\end{array}
\right.
\end{equation}
where $C$ is a universal constant.
In particular,
\begin{equation}\label{normbound*}
\|\nabla u\|_{ L^\infty(\Omega) }\le C\epsilon^{ \frac{1}{m}-1}
\|\varphi\|_{C^{1,\gamma}(\partial{\Omega};\mathbb{R}^{2})}.
\end{equation}
\end{theorem}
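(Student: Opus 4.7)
\textbf{Proof Proposal for Theorem \ref{mainthm1*}.} The plan is to run the argument of Sections 2--4 essentially verbatim, with the strict-convexity length scale $\epsilon+|z_1|^2$ everywhere replaced by $\delta(z_1)\sim \epsilon+|z_1|^m$ from \eqref{delta_z1*}. Accordingly I keep the decomposition \eqref{decom_u} and the same auxiliary functions $\bar u_i^\alpha$ and $\bar u_i^3$ of \eqref{def:ubar1112}--\eqref{def:ubar2122}, \eqref{def:ubar1323}, only re-reading their derivative bounds using \eqref{h1h24*}--\eqref{h1h25*}: a direct calculation gives $|\partial_{x_2}\bar u|\le C/(\epsilon+|x_1|^m)$, $|\partial_{x_1}\bar u|\le C|x_1|^{m-1}/(\epsilon+|x_1|^m)$, and
\[
|\mathcal{L}_{\lambda,\mu}\bar u_i^\alpha(x)| \;\le\; \frac{C}{\epsilon+|x_1|^m}+\frac{C|x_1|^{m-1}}{(\epsilon+|x_1|^m)^2},\qquad x\in\Omega_R.
\]
The first two estimates in Proposition \ref{prop_gradient}, $\|\nabla v_3\|_{L^\infty}+\|\nabla v_1^\alpha+\nabla v_2^\alpha\|_{L^\infty}\le C$, transfer unchanged since they depend only on ellipticity, not on the local geometry near $P_1,P_2$.

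Next I set up the pointwise gradient bounds for $v_i^\alpha$, $i,\alpha=1,2$, and $v_i^3$. With $w_i^\alpha:=v_i^\alpha-\bar u_i^\alpha$, the Caccioppoli-type inequality \eqref{FsFt11} holds verbatim, and the Poincar\'e estimate over the thin strip $\widehat\Omega_s(z_1)$ becomes $\int_{\widehat\Omega_s(z_1)}|w|^2\le C\delta(z_1)^2\int|\nabla w|^2$. Exactly as in STEP~2 of Proposition \ref{prop1} I iterate
\[
\widehat F(t)\;\le\;\Bigl(\tfrac{C_0\delta(z_1)}{s-t}\Bigr)^{\!2}\widehat F(s)+(s-t)^2\!\int_{\widehat\Omega_s(z_1)}\!|\mathcal{L}_{\lambda,\mu}\bar u_i^\alpha|^2\,dx,
\]
splitting into the inner regime $|z_1|\le\epsilon^{1/m}$ (where $\delta\sim\epsilon$, step size $\epsilon$, $k\sim\epsilon^{1/m-1}$ iterations) and the outer regime $\epsilon^{1/m}<|z_1|<R$ (where $\delta\sim|z_1|^m$, step size $|z_1|^m$, $k\sim|z_1|^{1-m}$ iterations). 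Using the forcing $\int|\mathcal{L}_{\lambda,\mu}\bar u_i^\alpha|^2\,dx\le Cs/(\epsilon+|z_1|^m)^2+Cs|z_1|^{2m-2}/(\epsilon+|z_1|^m)^4$ and $\int_{\widetilde\Omega}|\nabla w_i^\alpha|^2\le C$, this produces the analog of \eqref{energy_w_inomega_z1}, and after the rescaling \eqref{changeofvariant} (which again turns $\tfrac12Q'_1$ into essentially a unit square) combined with $W^{2,p}$ estimates up to the boundary, one obtains
\[
|\nabla v_i^\alpha(x)|\le\frac{C}{\epsilon+|x_1|^m}\ (i,\alpha=1,2),\qquad |\nabla v_i^3(x)|\le\frac{C(\epsilon^{1-1/m}+|x_1|)}{\epsilon+|x_1|^m}\ (i=1,2).
\]
The boundedness $|C_i^\alpha|\le C$ carries over from Lemma \ref{lem_C1C2bound} without change, since that argument uses only the trace of $u_\epsilon$ on $\partial D_i$ and the uniform $H^1$ bound.

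The heart of the extension is the sharp estimate $|C_1^\alpha-C_2^\alpha|\le C\epsilon^{1-1/m}$, $\alpha=1,2$. Repeating the argument of Lemma \ref{lem_a_11} I show that $a_{11}$ is positive definite and compute the new sizes of its entries. The upper bounds $a_{11}^{\alpha\alpha}\le C\epsilon^{1/m-1}$ for $\alpha=1,2$, $a_{11}^{33}\le C$, follow directly from the new gradient estimates above. The matching lower bound comes from the key integral
\[
a_{11}^{\alpha\alpha}\ge \tfrac1C\!\int_{\Omega_R}\!|\partial_{x_2}\bar u|^2\,dx-C\ge \tfrac1C\!\int_{-R}^{R}\!\frac{dx_1}{\epsilon+|x_1|^m}-C\;\ge\;\tfrac1C\,\epsilon^{1/m-1},\qquad \alpha=1,2,
\]
and $a_{11}^{33}\ge 1/C$ is proved exactly as in the strictly convex case via the Korn-based contradiction argument, which depends only on the limit geometry away from $P_1,P_2$. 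The off-diagonal entries $|a_{11}^{12}|$, $|a_{11}^{\alpha 3}|$ are controlled, again by the integration-by-parts trick of STEP~2--3 of Lemma \ref{lem_a_11}, by a strictly smaller negative power of $\epsilon$ (they pick up only the odd factor $x_1$ or $\partial_{x_1}\bar u$, which produces cancellations). Hence $\det a_{11}\sim \epsilon^{2/m-2}$, with cofactors along the first two diagonal directions of size $\epsilon^{1/m-1}$, and Cramer's rule \eqref{aaa1}--\eqref{aaa2} together with $|p|\le C$ (Lemma \ref{lem_ap}) gives the claimed bound on $|C_1^\alpha-C_2^\alpha|$. Plugging everything into the decomposition \eqref{nablau_dec} then yields \eqref{mainestimates*} in $\widetilde\Omega$, while the estimate in $D_1\cup D_2$ is immediate from $|C_i^3|\le C$.

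The main obstacle is running the two-regime iteration with the sharp constant: for $m\ge 3$ the forcing term $|x_1|^{m-1}/(\epsilon+|x_1|^m)^2$ behaves quite differently in the two regimes from the $m=2$ case, and one must check that the summed geometric series from $k\sim |z_1|^{1-m}$ iterations does not overwhelm the target $\int|\nabla w|^2\le C|z_1|^m$. A secondary technical point is verifying that the off-diagonal $a_{11}^{\alpha 3}$ estimate continues to be $O(1)$ rather than blowing up in $\epsilon$; this rests on the same parity-cancellation in $\int x_1(\partial_{x_2}\bar u)^2\,dx$ that occurs in the strictly convex case, but must be re-examined with the $m$-th order profile.
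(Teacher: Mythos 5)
Your proposal follows the paper's own proof of this theorem essentially step by step: the same decomposition and auxiliary functions $\bar u_i^\alpha$, $\bar u_i^3$ with derivative bounds read off from \eqref{h1h24*}--\eqref{h1h25*}, the same two-regime energy iteration (inner scale $\epsilon$, outer scale $|z_1|^m$) followed by the rescaling \eqref{changeofvariant} and $W^{2,p}$ estimates, and the same $a_{11}$/Cramer's-rule analysis giving $a_{11}^{\alpha\alpha}\sim\epsilon^{\frac1m-1}$, $\det a_{11}\sim\epsilon^{\frac2m-2}$ and hence $|C_1^\alpha-C_2^\alpha|\le C\epsilon^{1-\frac1m}$. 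The only blemishes are minor and fixable: your stated forcing bound omits the strip-height factor $\delta\sim\epsilon+|z_1|^m$ (the correct computation, e.g. $\int_{\widehat\Omega_s(z_1)}|\mathcal{L}_{\lambda,\mu}\bar u_1^1|^2\,dx\le Cs/|z_1|^{m+2}$ in the outer regime, is what yields the sharp analog \eqref{energy_w_inomega_z1*}), and your bound for $\nabla v_i^3$ carries $\epsilon^{1-\frac1m}$ where the paper obtains $\epsilon+|x_1|$, though both suffice for \eqref{mainestimates*}; the off-diagonal cancellation you flag for $a_{11}^{\alpha 3}$ is handled no more explicitly in the paper itself, which simply invokes the argument of Lemma \ref{lem_a_11}.
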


In the following, we only list the main differences. We define $\bar{u}$ by \eqref{ubar} as before.
A calculation gives
\begin{equation}\label{nablau_bar*}
|\partial_{x_{1}}\bar{u}(x)|\leq\frac{C|x_{1}|^{m-1}}{\epsilon+|x_{1}|^{m}},\quad
|\partial_{x_{2}}\bar{u}(x)|\leq\frac{C}{\epsilon+|x_{1}|^{m}},\quad~x\in\Omega_{R},
\end{equation}
by \eqref{h1h2}, we have
\begin{equation}\label{nabla2u_bar*}
|\partial_{x_{1}x_{1}}\bar{u}(x)|\leq\frac{C|x_{1}|^{m-2}}{\epsilon+|x_{1}|^{m}},~~
|\partial_{x_{1}x_{2}}\bar{u}(x)|\leq\frac{C|x_{1}|^{m-1}}{(\epsilon+|x_{1}|^{m})^{2}},
~~\partial_{x_{2}x_{2}}\bar{u}(x)=0,~~x\in\Omega_{R}.
\end{equation}
Define $\bar{u}_{i}^{\alpha}$, $i,\alpha=1,2$ as in \eqref{def:ubar1112} and \eqref{def:ubar2122}. By \eqref{L_u}, \eqref{nablau_bar*} and \eqref{nabla2u_bar*}, we have
\begin{equation}\label{L_ubar_ialpha*}
|\mathcal{L}_{\lambda,\mu}\bar{u}_{i}^{\alpha}(x)|\leq\frac{C|x_{1}|^{m-2}}{\epsilon+|x_{1}|^{m}}
+\frac{C|x_{1}|^{m-1}}{(\epsilon+|x_{1}|^{m})^{2}},\quad\,i,\alpha=1,2,\quad~x\in\Omega_{R}.
\end{equation}

Instead of Proposition \ref{prop_gradient}, we have

\begin{prop}\label{prop_gradient*}
Under the hypotheses of Theorem \ref{mainthm1*}
and a normalization
 $\|\varphi\|_{C^{1,\gamma}(\partial\Omega)}=1$,
we have, for $0<\epsilon<1$,
\begin{equation}\label{mainev3*}
\|\nabla{v}_{3}\|_{L^{\infty}(\widetilde{\Omega})}\leq\,C;
\end{equation}
\begin{equation}\label{mainev1+23*}
\|\nabla{v}_{1}^{\alpha}+\nabla{v}_{2}^{\alpha}\|_{L^{\infty}(\widetilde{\Omega})}\leq\,C,\quad\alpha=1,2, 3;
\end{equation}
\begin{equation}\label{mainev1*}
|\nabla{v}_{i}^{\alpha}(x)|\leq\frac{C}{\epsilon+\mathrm{dist}^{m}(x,\overline{P_{1}P_{2}})},\quad\,i,\alpha=1,2,\quad\,x\in\widetilde{\Omega};
\end{equation}
\begin{equation}\label{mainevi3*}
|\nabla{v}_{i}^{3}(x)|\leq\,C\frac{\epsilon+\mathrm{dist}(x,\overline{P_{1}P_{2}})}{\epsilon+\mathrm{dist}^{m}(x,\overline{P_{1}P_{2}})},\quad\,i=1,2,
\quad\,x\in\widetilde{\Omega};
\end{equation}
and
\begin{equation}\label{maineC*}
|C_{i}^{\alpha}|\leq\,C,\quad
i=1,2,~\alpha=1,2,3;
\end{equation}
\begin{equation}\label{maineC1-C2*}
\quad|C_{1}^{\alpha}-C_{2}^{\alpha}|\leq\,C\epsilon^{1-\frac{1}{m}},\quad
\alpha=1,2.
\end{equation}
\end{prop}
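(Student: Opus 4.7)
The plan is to follow the proof of Proposition \ref{prop_gradient} almost verbatim, replacing the exponent $2$ by $m$ throughout and the threshold $\sqrt\epsilon$ by $\epsilon^{1/m}$, and invoking the new pointwise bounds on $\bar u$ given by \eqref{nablau_bar*}, \eqref{nabla2u_bar*}, \eqref{L_ubar_ialpha*}. Two of the six claims are insensitive to the flatness order: \eqref{mainev3*} and \eqref{mainev1+23*} follow from Lemma \ref{lem_v3_v1+v2} as stated, because its proof only uses the First Korn inequality, ellipticity of $\mathbb{C}^0$, the least-energy $H^1$-bound and ADN estimates; and \eqref{maineC*} follows from Lemma \ref{lem_C1C2bound} unchanged, since only the $H^1$-bound for $u$ and the linear independence of $\psi^\alpha|_{\partial D_i}$ enter that proof.

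For \eqref{mainev1*} I would redo Proposition \ref{prop1} with $\delta=\delta(z_1)\sim\epsilon+|z_1|^m$ as in \eqref{delta_z1*}. Setting $w_i^\alpha:=v_i^\alpha-\bar u_i^\alpha$, the global bound $\int_{\widetilde\Omega}|\nabla w_i^\alpha|^2\le C$ is obtained by the same Step~1 argument, since $\|\partial_{x_1}\bar u\|_{L^2(\Omega_R)}\le C$ is preserved under \eqref{nablau_bar*}. The Caccioppoli inequality \eqref{FsFt11} is formally unchanged; with the new $\delta$, the Poincar\'e-type step yields $\int_{\widehat\Omega_s(z_1)}|w|^2\le C\delta^2\int|\nabla w|^2$, and \eqref{L_ubar_ialpha*} gives $\int_{\widehat\Omega_s(z_1)}|\mathcal L_{\lambda,\mu}\bar u_i^\alpha|^2\le Cs/|z_1|^{m+2}$ in the outer regime $\epsilon^{1/m}<|z_1|\le R$, together with an analogous $m$-dependent expression in the inner regime. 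Iterating on $t_i=2C_0 i\delta$ over $k\sim 1/|z_1|^{m-1}$ (resp.\ $\epsilon^{-(m-1)/m}$) steps produces $\int_{\widehat\Omega_\delta(z_1)}|\nabla w_i^\alpha|^2\le C|z_1|^{2m-2}$ (resp.\ $C(\epsilon^2+|z_1|^{2m-2})$). A rescaling $y=\delta^{-1}(x_1-z_1,x_2)$ turns $\widehat\Omega_\delta(z_1)$ into an essentially unit domain (the rescaled interfaces satisfy $\|\hat h_i\|_{C^{1,1}}=o(1)$ by \eqref{h1h25*}), and $W^{2,3}$-estimates with Sobolev embedding upgrade the $L^2$ bound to $|\nabla w_i^\alpha|\le C/|x_1|$ (outer) and $C(\epsilon+|x_1|)/\epsilon$ (inner). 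Combined with $|\nabla\bar u_i^\alpha|\le C/(\epsilon+|x_1|^m)$ this gives \eqref{mainev1*}; the same scheme applied with $\bar u_i^3=(x_2\bar u,-x_1\bar u)^T$ and $|\nabla\bar u_i^3|\le C(\epsilon+|x_1|)/(\epsilon+|x_1|^m)$ gives \eqref{mainevi3*}.

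The decisive new estimate is \eqref{maineC1-C2*}. The linear system $a_{11}(C_1-C_2)=p$ with $|p|\le C$ holds verbatim from Section \ref{sec_C1C2} (Lemma \ref{lem_ap} uses only the estimates already proved above), so by Cramer's rule it suffices to sharpen the two-sided bounds on $a_{11}$. The crucial lower bound is
\[
a_{11}^{\alpha\alpha}\ \ge\ \tfrac{1}{C}\int_{\Omega_R}|\partial_{x_2}\bar u|^2\,dx\ \ge\ \tfrac{1}{C}\int_{-R}^{R}\frac{dx_1}{\epsilon+h_1(x_1)-h_2(x_1)}\ \ge\ \tfrac{1}{C\,\epsilon^{1-1/m}},\qquad\alpha=1,2,
\]
obtained from \eqref{h1h24*} by the substitution $x_1=\epsilon^{1/m}y$; the matching upper bound $a_{11}^{\alpha\alpha}\le C\epsilon^{1/m-1}$ comes from \eqref{mainev1*}. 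The remaining entries satisfy $a_{11}^{33}\asymp 1$ (by the contradiction--compactness argument in Step~1 of Lemma \ref{lem_a_11}, which does not use the order of flatness), $|a_{11}^{\alpha 3}|\le C$, and $|a_{11}^{12}|=o(\epsilon^{1/m-1})$ because the cross integrand $\partial_{x_1}\bar u\,\partial_{x_2}\bar u$ integrates to at worst a power of $|\ln\epsilon|$. Hence $\det a_{11}\asymp\epsilon^{2/m-2}$, and feeding everything into \eqref{aaa1}--\eqref{aaa2} gives $|C_1^\alpha-C_2^\alpha|\le C\,|p|\cdot\epsilon^{1/m-1}\cdot\epsilon^{2-2/m}=C\epsilon^{1-1/m}$.

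The main obstacle I anticipate is the careful bookkeeping in the iteration of \eqref{mainev1*} in the inner regime $|z_1|\le\epsilon^{1/m}$: the new source $|\mathcal L_{\lambda,\mu}\bar u_i^\alpha|^2$ picks up the term $(|z_1|+s)^{2m-4}/\epsilon^2$, which after the $(s-t)^2$-weighted integration produces a polynomial growth factor of order $i^{2m-1}$ at iteration step $i$. This must be dominated by the geometric decay $4^{-i}$ over $k\sim\epsilon^{-(m-1)/m}$ iterations, and a power-counting check confirms that the cumulative contribution is indeed $C(\epsilon^2+|z_1|^{2m-2})$. A secondary point is verifying that the rescaling of Step~3 of Proposition \ref{prop1} produces uniformly $C^{1,1}$-small boundary graphs $\hat h_i$ in the new regime: this is ensured by \eqref{h1h25*} together with the scaling $\delta\sim\epsilon+|z_1|^m$ precisely when $m\ge 2$.
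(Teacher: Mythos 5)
Your proposal is correct and follows essentially the same route as the paper's Section 5: the same iteration scheme with $\delta\sim\epsilon+|z_1|^m$ and threshold $\epsilon^{1/m}$, the same rescaling/$W^{2,p}$ upgrade, and the same Cramer's-rule argument hinging on $a_{11}^{\alpha\alpha}\asymp\epsilon^{\frac1m-1}$ and $\det a_{11}\asymp\epsilon^{\frac2m-2}$. Your inner-regime intermediate bounds (e.g. $C(\epsilon^{2}+|z_1|^{2m-2})$ for the local energy and the corresponding pointwise bound for $\nabla w_i^\alpha$) are slightly weaker than the ones stated in the paper, but they still imply all six estimates of the proposition, and your bookkeeping of the $i^{2m-1}$ growth against the geometric decay is if anything more careful than the paper's.
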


Denote
$$w_{i}^{\alpha}:=v_{i}^{\alpha}-\bar{u}_{i}^{\alpha},\quad\,i=1,2,~\alpha=1,2,3.$$
Then, instead of Proposition \ref{prop1}, we have

\begin{prop}\label{prop1*}
Assume the above, let $v_{i}^{\alpha}\in{C}^2(\widetilde{\Omega};\mathbb{R}^{2})\cap{C}^1(\overline{\widetilde{\Omega}};\mathbb{R}^{2})$ be the
weak solution of \eqref{v1alpha}. Then, for $i,\alpha=1,2$,
\begin{equation}\label{energy_w*}
\int_{\widetilde{\Omega}}\left|\nabla{w}_{i}^{\alpha}\right|^{2}dx\leq\,C,
\end{equation}
\begin{equation}\label{energy_w_inomega_z1*}
\int_{\widehat{\Omega}_{\delta}(z_{1})}\left|\nabla{w}_{i}^{\alpha}\right|^{2}dx\leq
\begin{cases}C\left(\epsilon^{2m-2}+|z_{1}|^{2m-2}\right),&|z_{1}|\leq\sqrt[m]{\epsilon},\\
C|z_{1}|^{2m-2},&\sqrt[m]{\epsilon}<|z_{1}|\leq\,R,
\end{cases}
\end{equation}
and
\begin{equation}\label{nabla_w_ialpha*}
\left|\nabla{w}_{i}^{\alpha}(x)\right|\leq
\begin{cases}C
\frac{\epsilon^{m-1}+|x_1|^{m-1}}{\epsilon},&|x_{1}|\leq\sqrt[m]{\epsilon},\\
\frac{C}{|x_{1}|},&\sqrt[m]{\epsilon}<|x_{1}|\leq\,R.
\end{cases}
\end{equation}
\end{prop}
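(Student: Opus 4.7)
The plan is to run the three-step scheme of the proof of Proposition \ref{prop1} almost verbatim, with the exponents adjusted from $m=2$ to general $m\ge 2$ via the new derivative bounds \eqref{nablau_bar*}, \eqref{nabla2u_bar*}, and \eqref{L_ubar_ialpha*}. As before, I would only treat $i=\alpha=1$, write $w:=w_1^1$, and work from $\mathcal{L}_{\lambda,\mu}w = -\mathcal{L}_{\lambda,\mu}\bar u_1^1$ in $\widetilde\Omega$ with $w=0$ on $\partial\widetilde\Omega$.

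For Step 1 (the global bound \eqref{energy_w*}), I would test the equation against $w$, apply the First Korn inequality, split the right-hand side into the $\Omega_{r_0}$-piece (integrated by parts in $x_1$) and the remainder (controlled via a mean-value-theorem trace argument as in \eqref{fubini}). The only new input is uniform integrability of $\partial_{x_1}\bar u$: by \eqref{nablau_bar*}, $\int_{\Omega_R}|\partial_{x_1}\bar u|^2 dx \le C\int_{|x_1|<R}\frac{|x_1|^{2m-2}}{\epsilon+|x_1|^m}dx_1 \le C$ for every $m\ge 2$, which closes the argument.

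For Step 2 (the localized energy bound \eqref{energy_w_inomega_z1*}), I would test against $w\eta^2$ with a cutoff $\eta$ in $x_1$ supported in $|x_1-z_1|<s$ to obtain the Caccioppoli-type inequality
$$\hat F(t)\le \frac{C}{(s-t)^2}\int_{\widehat\Omega_s(z_1)}|w|^2\,dx + (s-t)^2\int_{\widehat\Omega_s(z_1)}|\mathcal{L}\bar u_1^1|^2\,dx.$$
Since $w=0$ on $\Gamma^{\pm}$, the Poincar\'e-in-$x_2$ inequality gives $\int|w|^2 \le C\delta^2\int|\nabla w|^2$ with $\delta\sim\epsilon$ in the inner regime $|z_1|\le\sqrt[m]\epsilon$ and $\delta\sim|z_1|^m$ in the outer regime; inserting \eqref{L_ubar_ialpha*} and integrating the two explicit factors $|x_1|^{2m-4}$ and $|x_1|^{2m-2}$ yields $\int|\mathcal{L}\bar u_1^1|^2\le Cs/|z_1|^{m+2}$ in the outer regime, and $Cs(|z_1|+s)^{2m-4}/\epsilon + Cs(|z_1|+s)^{2m-2}/\epsilon^3$ in the inner one. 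One then iterates with step size $2C_0|z_1|^m$ (resp.\ $2C_0\epsilon$), chosen so that the prefactor of $\hat F(s)$ collapses to $1/4$, for $k\sim|z_1|^{1-m}$ (resp.\ $k\sim\epsilon^{(1-m)/m}$) iterations until $t_k$ reaches a scale where the Step 1 global bound closes the recursion, exactly mirroring the $m=2$ iteration.

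Finally for Step 3 (the pointwise bound \eqref{nabla_w_ialpha*}), I would perform the rescaling $x=(z_1,0)+\delta y$ of \eqref{changeofvariant}, which by \eqref{h1h25*} and \eqref{delta_z1*} flattens $\widehat\Omega_\delta(z_1)$ to a near-unit region $Q_1'$ with $C^{1,1}$ graph boundaries of small norm, and then apply $W^{2,p}$ boundary estimates for elliptic systems plus Sobolev embedding to obtain
$$\|\nabla w\|_{L^\infty(\widehat\Omega_{\delta/2}(z_1))} \le \frac{C}{\delta}\Bigl(\|\nabla w\|_{L^2(\widehat\Omega_\delta(z_1))} + \delta^2 \|\mathcal{L}\bar u_1^1\|_{L^\infty(\widehat\Omega_\delta(z_1))}\Bigr).$$
Substituting Step 2's energy bound and the pointwise estimate $\delta^2|\mathcal{L}\bar u_1^1|\le C(\epsilon^{m-1}+|z_1|^{m-1})/\epsilon$ in the inner regime and $\le C/|z_1|$ in the outer regime (both immediate from \eqref{L_ubar_ialpha*}, \eqref{delta_z1*}) yields the two cases of \eqref{nabla_w_ialpha*}. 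The main obstacle I expect is the bookkeeping in Step 2 for $|z_1|\le\sqrt[m]\epsilon$: the per-step error expands into several monomials, the largest being of order $(i+1)^{2m-1}\epsilon^{2m-2}$, and one must verify that $\sum_l (1/4)^{l-1} l^{2m-1}$ remains $O(1)$ and that all error terms are dominated by $C(\epsilon^{2m-2}+|z_1|^{2m-2})$ in the relevant range; this is routine but the only place where the algebra meaningfully differs from the $m=2$ proof.
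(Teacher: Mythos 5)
Your proposal is correct and follows essentially the same three-step scheme as the paper's proof of Proposition \ref{prop1*} (global energy bound via the First Korn inequality and the mean-value/trace trick, Caccioppoli inequality plus iteration at step scale $|z_1|^m$ in the outer regime and $\epsilon$ in the inner regime, then the rescaling \eqref{changeofvariant} with $W^{2,p}$ estimates and Sobolev embedding), with the same exponent adjustments coming from \eqref{nablau_bar*}--\eqref{L_ubar_ialpha*}. If anything, your inner-regime bookkeeping — keeping the factor $(|z_1|+s)^{2m-4}$ and the per-step power $i^{2m-1}$ together with $\sum_l l^{2m-1}4^{-l}<\infty$ and a Young-inequality absorption into $C\left(\epsilon^{2m-2}+|z_1|^{2m-2}\right)$ — is slightly more careful than the paper's displayed per-step bound $Ci^{3}\left(\epsilon^{2m-2}+|z_{1}|^{2m-2}\right)$, which is carried over from the $m=2$ case.
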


\begin{proof}
The proof of \eqref{energy_w*} is the same as that of \eqref{energy_w}. We only list the main differences from STEP 2 and STEP 3 in the proof of Proposition \ref{prop1}.

\noindent{\bf STEP 2.} Proof of \eqref{energy_w_inomega_z1*}.

\noindent
{\bf Case 1. For $\sqrt[m]{\epsilon}\leq|z_{1}|\leq\,R/2$.}

Note that for $0<s<\frac{2|z_{1}|}{3}$, we have
\begin{align}\label{energy_w_square*}
\int_{\widehat{\Omega}_{s}(z_{1})}|w|^{2}dx
&\leq\int_{|x_{1}-z_{1}|\leq\,s}(\epsilon+h_{1}(x_{1})
-h_{2}(x_{1}))^{2}\int_{-\frac{\epsilon}{2}+h_{2}(x_{1})}^{\frac{\epsilon}{2}+h_{1}(x_{1})}
|\partial_{x_{2}}w(x_{1},x_{2})|^{2}dx_{2}dx_{1}\nonumber\\
&\leq\,C|z_{1}|^{2m}\int_{\widehat{\Omega}_{s}(z_{1})}|\nabla{w}|^{2}dx,
\end{align}
By \eqref{L_ubar_ialpha*}, we have
\begin{align}\label{integal_Lubar11*}
\int_{\widehat{\Omega}_{s}(z_{1})}\left|\mathcal{L}_{\lambda,\mu}\bar{u}_{1}^{1}\right|^{2}
dx
&\leq\int_{\widehat{\Omega}_{s}(z_{1})}\left(\frac{C|x_{1}|^{m-2}}{\epsilon+|x_{1}|^{m}}
+\frac{C|x_{1}|^{m-1}}{(\epsilon+|x_{1}|^{m})^{2}}\right)^{2}dx\nonumber\\
&\leq\frac{C|z_{1}|^{m}s}{|z_{1}|^{2(m+1)}}\leq\frac{Cs}{|z_{1}|^{m+2}},\quad\quad\,0<s<\frac{2|z_{1}|}{3}.
\end{align}
As before, it follows from the above and \eqref{FsFt11} that
\begin{equation}\label{tildeF111*}
\widehat{F}(t)\leq\,\left(\frac{C_{0}|z_{1}|^{m}}{s-t}\right)^{2}\widehat{F}(s)+C(s-t)^{2}\frac{s}{|z_{1}|^{m+2}},
\quad\forall~0<t<s<\frac{2|z_{1}|}{3},
\end{equation}
where $C_0$ is also a universal constant.

Let $t_{i}=2C_{0}i\,|z_{1}|^{m}$, $i=1,2,\cdots$. Then
$$\frac{C_{0}|z_{1}|^{m}}{t_{i+1}-t_{i}}=\frac{1}{2}.$$
Let $k=\left[\frac{1}{4C_{0}|z_{1}|^{m-1}}\right]$. Then by \eqref{tildeF111*} with $s=t_{i+1}$ and $t=t_{i}$, we have
$$\widehat{F}(t_{i})\leq\,\frac{1}{4}\widehat{F}(t_{i+1})+\frac{C(t_{i+1}-t_{i})^{2}t_{i+1}}{|z_{1}|^{m+2}}
\leq\,\frac{1}{4}\widehat{F}(t_{i+1})+C(i+1)|z_{1}|^{2m-2},
$$
After $k$ iterations, we have,
using (\ref{energy_w*}),
\begin{eqnarray*}
\widehat{F}(t_{1}) &\leq &\left(\frac{1}{4}\right)^{k}\widehat{F}(t_{k+1})+C|z_{1}|^{2m-2}\sum_{l=1}^{k}\left(\frac{1}{4}\right)^{l-1}(l+1)
\\
&\leq &C|z_{1}|^{2m-2}.
\end{eqnarray*}
This implies that
$$\int_{\widehat{\Omega}_{\delta}(z_{1})}|\nabla{w}|^{2}dx\leq\,C|z_{1}|^{2m-2}.$$

\noindent
{\bf Case 2.} For $|z_{1}|\leq \sqrt[m]{\epsilon}$.

 For $0<t<s<\sqrt[m]{\epsilon}$, estimate \eqref{energy_w_square*} becomes
 \begin{align}\label{energy_w_square_in*}
\int_{\widehat{\Omega}_{s}(z_{1})}|w|^{2}dx
\leq\,C\epsilon^{2}\int_{\widehat{\Omega}_{s}(z_{1})}|\nabla{w}|^{2}dx,
\quad\,0<s<\sqrt[m]{\epsilon};
\end{align}
Estimate \eqref{integal_Lubar11*} becomes
\begin{align}\label{integal_Lubar11_in*}
\int_{\widehat{\Omega}_{s}(z_{1})}
|\mathcal{L}_{\lambda,\mu}\bar{u}_{1}^{1}|^{2} dx
&\leq\int_{\widehat{\Omega}_{s}(z_{1})}
\left(\frac{C|x_{1}|^{m-2}}{\epsilon+|x_{1}|^{m}}
+\frac{C|x_{1}|^{m-1}}{(\epsilon+|x_{1}|^{m})^{2}}\right)^{2} dx\nonumber\\
&\leq\frac{Cs}{\epsilon}
+\frac{C(|z_{1}|^{2m-2}+s^{2m-2})s}{\epsilon^{3}},\quad\mbox{for}~\,0<s<\sqrt[m]{\epsilon};
\end{align}
Estimate \eqref{tildeF111*} becomes, in view of \eqref{FsFt11},
\begin{equation}\label{tildeF111_in*}
\widehat{F}(t)\leq\,\left(\frac{C_{0}\epsilon}{s-t}\right)^{2}\widehat{F}(s)+C(s-t)^{2}s(\frac{1}{\epsilon}+\frac{|z_{1}|^{2m-2}}{\epsilon^{3}}
+\frac {s^{2m-2}}{ \epsilon^3}),
\quad\forall~0<t<s<\sqrt[m]{\epsilon}.
\end{equation}
Let $t_{i}=2C_{0}i\epsilon$, $i=1,2,\cdots$. Then
$$\frac{C_{0}\epsilon}{t_{i+1}-t_{i}}=\frac{1}{2}.$$
Let $k=\left[\frac{1}{4C_{0}\epsilon^{1-\frac{1}{m}}}\right]$. Then by \eqref{tildeF111_in} with $s=t_{i+1}$ and $t=t_{i}$, we have
$$\widehat{F}(t_{i})\leq\,\frac{1}{4}\widehat{F}(t_{i+1})
+Ci^3\left(\epsilon^{2m-2}+|z_{1}|^{2m-2}\right).$$
After $k$ iterations, we have,
using (\ref{energy_w*}),
\begin{eqnarray*}
\widehat{F}(t_{1})
&\leq& \left(\frac{1}{4}\right)^{k}\widehat{F}(t_{k+1})
+C\sum_{l=1}^{k}\left(\frac{1}{4}\right)^{l-1}l^3\left(\epsilon^{2m-2}+|z_{1}|^{2m-2}\right)\\
&\leq&\,
C\left(\frac{1}{4}\right)^{\frac{1}{C\epsilon^{1-\frac{1}{m}}}}
+C\left(\epsilon^{2m-2}+|z_{1}|^{2m-2}\right)
\leq\,C\left(\epsilon^{2m-2}+|z_{1}|^{2m-2}\right).
\end{eqnarray*}
This implies that
$$\int_{\widehat{\Omega}_{\delta}(z_{1})}|\nabla{w}|^{2}dx\leq\,C\left(\epsilon^{2m-2}+|z_{1}|^{2m-2}\right).$$

{\bf STEP 3.} Proof of \eqref{nabla_w_ialpha*}.

Using a change of variables \eqref{changeofvariant}, define $Q'_{r}$, $\hat{h}_{1}$, and $\hat{h}_{2}$ as in the proof of Proposition \ref{prop1}. Then
by \eqref{h1h25*},
$$|\hat{h}_{1}'(0)|+|\hat{h}_{2}'(0)|\leq\,C|z_{1}|^{m-1},
\quad|\hat{h}_{1}''(0)|+|\hat{h}_{2}''(0)|\leq\,C\delta|z_{1}|^{m-2}.$$
Since $R$ is small, $\|\hat{h}_{1}\|_{C^{1,1}((-1,1))}$ and $\|\hat{h}_{2}\|_{C^{1,1}((-1,1))}$ are small and $\frac{1}{2}Q'_{1}$ is essentially a unit square as far as applications of Sobolev embedding
theorems and classical $L^{p}$ estimates for elliptic systems are concerned.
By the same argument as in the proof of Proposition \ref{prop1}, \eqref{AAA} still holds. We divide into two cases to proceed.

\noindent
{\bf Case 1.}\   For $\sqrt[m]{\epsilon}\leq|z_{1}|\leq\,R/2$.

By (\ref{energy_w_inomega_z1*}),
$$\int_{\widehat{\Omega}_{\delta}(z_{1})}\left|\nabla{w}_{1}^{1}\right|^{2}dx\leq\,C|z_{1}|^{2m-2}.$$
By \eqref{L_ubar_ialpha*},
$$\delta^{2}\left|\mathcal{L}_{\lambda,\mu}\bar{u}_{1}^{1}\right|\leq\delta^{2}\left(\frac{C}{|z_{1}|^{2}}+\frac{C}{|z_{1}|^{m+1}}\right)\leq\,C|z_{1}|^{m-1},
\qquad \mbox{in}\ \widehat\Omega_\delta(z_1).$$
We deduce from (\ref{AAA}) that
$$\left|\nabla{w}_{1}^{1}(z_{1},x_{2})\right|=\frac{C|z_{1}|^{m-1}}{\delta}\leq\frac{C}{|z_{1}|},
\qquad\forall \
-\frac \epsilon 2 +h_2(z_1)<
x_2< \frac \epsilon 2 +h_1(z_1).$$

\noindent
{\bf Case 2.}\  For $|z_{1}|\leq2\sqrt[m]{\epsilon}$.

By (\ref{energy_w_inomega_z1*}),
$$\int_{\widehat{\Omega}_{\delta}(z_{1})}\left|\nabla{w}_{1}^{1}\right|^{2}dx\leq\,C(\epsilon^{2m-2}+|z_{1}|^{2m-2}).$$
By \eqref{L_ubar_ialpha*},
\begin{equation}
\delta^{2}\left|\mathcal{L}_{\lambda,\mu}\bar{u}_{1}^{1}\right|\leq
C \delta^{2}\left(\frac{(\epsilon+|z_1|)^{m-2}}{\epsilon}
+\frac{(\epsilon+|z_1|)^{m-1}}{\epsilon^2}\right)\leq\,C
\left(\epsilon+|z_1|\right)^{m-1}, \quad
\mbox{in}~~
\widehat \Omega_\delta(z_1).
\end{equation}
We deduce from (\ref{AAA}) that
$$\left|\nabla{w}_{1}^{1}(z_{1},x_{2})\right|=\frac{C}{\delta}
\left(\epsilon^{m-1}+|z_1|^{m-1}\right)\leq
C \frac{\epsilon^{m-1} +|z_1|^{m-1}}{\epsilon},
\qquad\forall \
-\frac \epsilon 2 +h_2(z_1)<
x_2< \frac \epsilon 2 +h_1(z_1).$$
Proposition \ref{prop1*} is established.
\end{proof}

Define $\bar{u}_{i}^{3}$, $i=1,2$ by \eqref{def:ubar1323}.
Using \eqref{h1h24*}, \eqref{h1h25*} and  \eqref{nablau_bar*}, we  have
\begin{equation}\label{nabla_baru13_in*}
\left|\nabla\bar{u}_{i}^{3}(x)\right|\leq\,
\frac{C(\epsilon+|x_1|)}
{\epsilon+|x_{1}|^{m}},\quad\,i=1,2,\quad\,x\in\Omega_{R},
\end{equation}
and
\begin{equation}\label{nabla_baru13_out*}
\left|\nabla\bar{u}_{i}^{3}(x)\right|\leq\,C,\quad\,i=1,2,\quad\,x\in\widetilde{\Omega}\setminus\Omega_{R}.
\end{equation}
It follows from \eqref{L_u}, \eqref{nablau_bar*} and \eqref{nabla2u_bar*} that
\begin{equation}\label{L_ubar_i3*}
\left|\mathcal{L}_{\lambda,\mu}\bar{u}_{i}^{3}\right|\leq\frac{C}{\epsilon+|x_{1}|^{m}},\quad\,i=1,2,\quad\,x\in\Omega_{R}.
\end{equation}
Then Lemma \ref{lemvi3} still holds, while Lemma \ref{lem_I11} and Lemma \ref{lemma3.6} become
\begin{lemma}\label{lem_I11*}
With $\delta=\delta(z_{1})$ in \eqref{delta}, we have,
 for $i=1,2$,
\begin{equation}\label{energyw13_out*}
\int_{\widehat{\Omega}_{\delta}(z_{1})}\left|\nabla{w}_{i}^{3}\right|^{2}dx\leq
\begin{cases}
C\epsilon^{2},& |z_{1}|<\sqrt[m]{\epsilon},\\
C|z_{1}|^{2m},&\sqrt[m]{\epsilon}\leq|z_{1}|<R/2.
\end{cases}
\end{equation}
\end{lemma}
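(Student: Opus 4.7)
The plan is to mirror the proof of Lemma \ref{lem_I11}, adapting each ingredient to the flatness order $m\ge 2$. Since $w_i^3=v_i^3-\bar{u}_i^3$ solves
\begin{equation*}
\mathcal{L}_{\lambda,\mu}w_i^3=-\mathcal{L}_{\lambda,\mu}\bar{u}_i^3\ \mbox{in }\widetilde{\Omega},\qquad w_i^3=0\ \mbox{on }\partial\widetilde{\Omega},
\end{equation*}
I will multiply by $w_i^3\eta^2$, with the same cutoff $\eta$ supported in $\{|x_1-z_1|<s\}$ and equal to $1$ on $\{|x_1-z_1|<t\}$, and use the first Korn inequality together with the symmetric $L^\infty$-bound on $\mathbb{C}^0$ to derive, exactly as in \eqref{FsFt13}, the Caccioppoli-type inequality
\begin{equation*}
\int_{\widehat{\Omega}_t(z_1)}|\nabla w_i^3|^2dx \le \frac{C}{(s-t)^2}\int_{\widehat{\Omega}_s(z_1)}|w_i^3|^2dx + (s-t)^2\int_{\widehat{\Omega}_s(z_1)}|\mathcal{L}_{\lambda,\mu}\bar{u}_i^3|^2dx.
\end{equation*}
This inequality is completely general and does not use $m=2$.

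In \textbf{Case 1}, $\sqrt[m]{\epsilon}\le|z_1|<R/2$, the vertical slab has width comparable to $|z_1|^m$ by \eqref{delta_z1*}, so the Poincar\'e inequality applied in the $x_2$-variable (using $w_i^3=0$ on $\Gamma^{\pm}$) gives the analog of \eqref{energy_w_square}, namely $\int_{\widehat{\Omega}_s(z_1)}|w_i^3|^2\le C|z_1|^{2m}\int_{\widehat{\Omega}_s(z_1)}|\nabla w_i^3|^2$ for $0<s<2|z_1|/3$. Moreover, by \eqref{L_ubar_i3*} and since $\epsilon+|x_1|^m\approx|z_1|^m$ on $\widehat{\Omega}_s(z_1)$, with area $\lesssim s\,|z_1|^m$, we obtain
\begin{equation*}
\int_{\widehat{\Omega}_s(z_1)}|\mathcal{L}_{\lambda,\mu}\bar{u}_i^3|^2dx \le \frac{Cs}{|z_1|^m}.
\end{equation*}
Setting $\widehat{F}(t):=\int_{\widehat{\Omega}_t(z_1)}|\nabla w_i^3|^2 dx$, absorbing the $\widehat{F}(s)$ term yields the iteration
\begin{equation*}
\widehat{F}(t) \le \Bigl(\tfrac{C_0|z_1|^m}{s-t}\Bigr)^2\widehat{F}(s) + C(s-t)^2\tfrac{s}{|z_1|^m},\qquad 0<t<s<\tfrac{2|z_1|}{3}.
\end{equation*}
Choosing $t_i=2C_0 i|z_1|^m$ and $k=[1/(4C_0|z_1|^{m-1})]$, so that $t_{k+1}<2|z_1|/3$, running $k$ iterations, and using the global bound \eqref{energy_w*} to start, will give the geometric sum bound $\widehat{F}(t_1)\le C|z_1|^{2m}$, which is the desired estimate on $\widehat{\Omega}_\delta(z_1)\subset\widehat{\Omega}_{t_1}(z_1)$.

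In \textbf{Case 2}, $|z_1|<\sqrt[m]{\epsilon}$, the slab width is comparable to $\epsilon$, giving $\int|w_i^3|^2\le C\epsilon^2\int|\nabla w_i^3|^2$, and $|x_1|^m\le\epsilon$ on $\widehat{\Omega}_s(z_1)$ so $|\mathcal{L}_{\lambda,\mu}\bar{u}_i^3|\le C/\epsilon$, producing $\int|\mathcal{L}_{\lambda,\mu}\bar{u}_i^3|^2\le Cs/\epsilon$. The iteration then reads
\begin{equation*}
\widehat{F}(t) \le \Bigl(\tfrac{C_0\epsilon}{s-t}\Bigr)^2\widehat{F}(s) + C(s-t)^2\tfrac{s}{\epsilon},\qquad 0<t<s<\sqrt[m]{\epsilon}.
\end{equation*}
Taking $t_i=2C_0 i\epsilon$ and $k=[1/(4C_0\epsilon^{1-1/m})]$, each step contributes $\lesssim i\epsilon^2$, so that after $k$ iterations $\widehat{F}(t_1)\le C\epsilon^2$. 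The only real care needed, and in my view the one point that could trip the argument up, is verifying that the choice of $k$ keeps $t_{k+1}$ inside the regime where the above Poincar\'e-type and right-hand-side bounds are valid (i.e.\ $t_{k+1}<2|z_1|/3$ in Case 1 and $t_{k+1}<\sqrt[m]{\epsilon}$ in Case 2); both follow directly from the definitions, so no essentially new idea beyond the $m=2$ argument is required.
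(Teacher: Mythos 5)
Your proposal is correct and follows essentially the same route as the paper: the same Caccioppoli inequality \eqref{FsFt13}, the same thin-slab Poincar\'e bounds, the source estimates $\int_{\widehat{\Omega}_{s}(z_{1})}|\mathcal{L}_{\lambda,\mu}\bar{u}_{i}^{3}|^{2}dx\leq Cs/|z_{1}|^{m}$ (resp.\ $Cs/\epsilon$) coming from \eqref{L_ubar_i3*}, and the identical iteration with $t_{i}=2C_{0}i|z_{1}|^{m}$ (resp.\ $t_{i}=2C_{0}i\epsilon$). The only slip is cosmetic: the global energy bound used to start the iteration should be the analogue of \eqref{XYZ} for $w_{i}^{3}$ (which follows from Lemma \ref{lemvi3} together with \eqref{nabla_baru13_in*}--\eqref{nabla_baru13_out*}), not \eqref{energy_w*}, which concerns $w_{i}^{\alpha}$ with $\alpha=1,2$.
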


\begin{proof}
The proof is very similar to that of Lemma \ref{lem_I11}. By the same argument, we still have \eqref{FsFt13} holds.

\noindent
{\bf Case 1. $\sqrt[m]{\epsilon}<|z_{1}|<R/2$.}

We still have \eqref{energy_w_square*} for $0<s<\frac{2|z_{1}|}{3}$. Instead of \eqref{integal_Lubar11*}, we have, using
 \eqref{L_ubar_i3*},
\begin{align}\label{Lu13*}
\int_{\widehat{\Omega}_{s}(z_{1})}\left|\mathcal{L}_{\lambda,\mu}\bar{u}_{1}^{3}\right|^{2}dx\leq\frac{Cs}{|z_{1}|^{m}}.
\end{align}
Instead of \eqref{tildeF111*}, we have
\begin{equation}\label{tildeF13*}
\widehat{F}(t)\leq\,\left(\frac{C_{0}|z_{1}|^{m}}{s-t}\right)^{2}\widehat{F}(s)+C(s-t)^{2}\frac{s}{|z_{1}|^{m}},
\quad\forall~0<t<s<\frac{2|z_{1}|}{3}.
\end{equation}
We define $\{t_{i}\}$, $k$ and iterate as in the proof of \eqref{energy_w_inomega_z1*}, right below formula \eqref{tildeF111*}, to obtain,
using \eqref{XYZ},
$$\widehat{F}(t_{1})
\leq\,\left(\frac{1}{4}\right)^{k}\widehat{F}(\frac{2|z_{1}|}{3})+C|z_{1}|^{2m}\sum_{l=1}^{k}\left(\frac{1}{4}\right)^{l}l\leq\,C|z_{1}|^{2m}.$$
This implies that
$$\int_{\widehat{\Omega}_{\delta}(z_{1})}|\nabla{w}|^{2}dx\leq\,C|z_{1}|^{2m}.$$

\noindent
{\bf Case 2. $|z_{1}|<\sqrt[m]{\epsilon}$.}

Estimate \eqref{energy_w_square_in*} remains the same. Estimate \eqref{integal_Lubar11_in*} becomes
\begin{align}\label{integal_Lubar13_in*}
\int_{\widehat{\Omega}_{s}(z_{1})}\left|\mathcal{L}_{\lambda,\mu}\bar{u}_{1}^{3}\right|^{2}dx
\leq\frac{Cs}{\epsilon},\quad\,0<s<\sqrt[m]{\epsilon}.
\end{align}
Estimate \eqref{tildeF111_in*} becomes
\begin{equation}\label{tildeF2*}
\widehat{F}(t)\leq\,\left(\frac{C_{0}\epsilon}{s-t}\right)^{2}\widehat{F}(s)+\frac{C(s-t)^{2}s}{\epsilon},
\quad\forall~0<t<s<\sqrt[m]{\epsilon}.
\end{equation}
Define $\{t_{i}\}$, $k$ and iterate as in the proof of \eqref{energy_w_inomega_z1}, right below formula \eqref{tildeF111_in}, to obtain
$$\widehat{F}(t_{1})\leq\,\left(\frac{1}{4}\right)^{k}\widehat{F}(t_{k+1})
+C\sum_{l=1}^{k}\left(\frac{1}{4}\right)^{l-1}l\epsilon^{2}
\leq\,C\epsilon^{2}.$$
This implies that
$$\int_{\widehat{\Omega}_{\delta}(z_{1})}|\nabla{w}|^{2}dx\leq\,C\epsilon^{2}.$$
\end{proof}
It is not difficult to obtain
\begin{lemma}\label{lemma3.6*}
\begin{equation}\label{nabla_w13*}
\|\nabla{w}_{i}^{3}\|_{L^{\infty}(\widetilde{\Omega})}\leq\,C,\quad\,i=1,2.
\end{equation}
Consequently,
\begin{equation}\label{nabla_v13*}
|\nabla{v}_{i}^{3}(x)|\leq\,\frac{C(\epsilon+|x_{1}|)}{\epsilon+|x_{1}|^{m}},\quad\,i=1,2,\quad\,x\in{\Omega_R}.
\end{equation}
\end{lemma}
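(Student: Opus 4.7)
The plan is to mimic the proof of Lemma \ref{lemma3.6} (which in turn used the scheme of STEP 3 in the proof of Proposition \ref{prop1}), but now feeding in the $m$-dependent ingredients from Lemma \ref{lem_I11*} and from the pointwise bound \eqref{L_ubar_i3*} on $\mathcal{L}_{\lambda,\mu}\bar{u}_i^3$. It suffices to work with $i=1$; the case $i=2$ is identical by symmetry. Outside $\Omega_R$ the estimate $\|\nabla w_1^3\|_{L^\infty}\le C$ is standard elliptic interior/boundary regularity (applied after the energy bound \eqref{XYZ}), so the substance is inside $\Omega_R$.

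First I would fix $z_1$ with $|z_1|\le R/2$ and perform the rescaling \eqref{changeofvariant}, sending $\widehat{\Omega}_\delta(z_1)$ to the essentially unit region $Q'_1$, and let $W_1^3, U_1^3$ be the rescaled versions of $w_1^3, \bar{u}_1^3$. Since $W_1^3$ vanishes on the top and bottom of $Q'_1$, Poincar\'e plus the $W^{2,p}$ theory of Agmon–Douglis–Nirenberg for the elliptic system $\mathcal{L}_{\lambda,\mu}W_1^3=-\mathcal{L}_{\lambda,\mu}U_1^3$, together with Sobolev embedding with $p=3$, yields
\begin{equation*}
\left\|\nabla w_1^3\right\|_{L^\infty(\widehat\Omega_{\delta/2}(z_1))}
\le \frac{C}{\delta}\Big(\left\|\nabla w_1^3\right\|_{L^2(\widehat\Omega_\delta(z_1))}
+\delta^2\left\|\mathcal{L}_{\lambda,\mu}\bar u_1^3\right\|_{L^\infty(\widehat\Omega_\delta(z_1))}\Big),
\end{equation*}
the analog of \eqref{AAA}.

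Next I would insert the two cases. For $\sqrt[m]{\epsilon}\le|z_1|\le R/2$, Lemma \ref{lem_I11*} gives $\int_{\widehat\Omega_\delta(z_1)}|\nabla w_1^3|^2\le C|z_1|^{2m}$, while \eqref{L_ubar_i3*} combined with \eqref{delta_z1*} gives $\delta^2|\mathcal{L}_{\lambda,\mu}\bar u_1^3|\le C\delta^2/|z_1|^m\le C|z_1|^m$; substituting and using $\delta\asymp|z_1|^m$ yields $|\nabla w_1^3(z_1,x_2)|\le C$. For $|z_1|\le\sqrt[m]{\epsilon}$, Lemma \ref{lem_I11*} gives the $L^2$ bound $C\epsilon^2$, \eqref{L_ubar_i3*} and $\delta\asymp\epsilon$ give $\delta^2|\mathcal{L}_{\lambda,\mu}\bar u_1^3|\le C\epsilon$, and the same substitution produces $|\nabla w_1^3(z_1,x_2)|\le C$. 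Together with the estimate outside $\Omega_R$, this establishes \eqref{nabla_w13*}.

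Finally, \eqref{nabla_v13*} follows by writing $v_1^3=\bar u_1^3+w_1^3$, applying \eqref{nabla_w13*} and comparing with \eqref{nabla_baru13_in*}, since $C\le C(\epsilon+|x_1|)/(\epsilon+|x_1|^m)$ in $\Omega_R$. The only place that requires mild care is making sure the $W^{2,p}$ constant after the rescaling is universal: by \eqref{h1h25*} and $\delta\asymp\epsilon+|z_1|^m$ one checks that $\hat h_1,\hat h_2$ have $C^{1,1}$ norm uniformly small on $(-1,1)$, so the rescaled domain $Q'_1$ has uniformly controlled Lipschitz geometry and the Agmon–Douglis–Nirenberg constants are independent of $\epsilon$ and $z_1$. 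This is the only step with any subtlety; everything else is a direct transcription of the $m=2$ argument with the improved inputs.
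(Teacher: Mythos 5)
Your proposal is correct and is exactly the argument the paper intends: it repeats the rescaling scheme of \eqref{AAA} from the proof of Proposition \ref{prop1} (as in Lemma \ref{lemma3.6}) with the $m$-dependent inputs $\int_{\widehat\Omega_\delta(z_1)}|\nabla w_1^3|^2\le C|z_1|^{2m}$ (resp.\ $C\epsilon^2$) from Lemma \ref{lem_I11*} and $\delta^2|\mathcal{L}_{\lambda,\mu}\bar u_1^3|\le C|z_1|^m$ (resp.\ $C\epsilon$) from \eqref{L_ubar_i3*}, plus the exterior bound and the comparison with \eqref{nabla_baru13_in*}. The paper states this lemma without written proof precisely because it follows this way, so there is nothing to add.
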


The last main difference is the computation of $a_{11}^{\alpha\alpha}$, $\alpha=1,2$. In fact,
By   \eqref{coeff4_strongelyconvex},
\eqref{coeff2}, \eqref{mainev1} and \eqref{energy_w*},
\begin{align*}
a_{11}^{\alpha\alpha}&
=\int_{\widetilde{\Omega}}\left(\mathbb{C}^{0}e\left(v_{1}^{\alpha}\right),e\left(v_{1}^{\alpha}\right)\right)dx
=\int_{\widetilde{\Omega}}\left(\mathbb{C}^{0}\nabla{v}_{1}^{\alpha},\nabla{v}_{1}^{\alpha}\right)dx\\
&\leq\,C\int_{\widetilde{\Omega}}\left|\nabla{v}_{1}^{\alpha}\right|^{2}dx
\leq\,C\int_{\widetilde{\Omega}}\left|\nabla\bar{u}_{1}^{\alpha}\right|^{2}dx+C\int_{\widetilde{\Omega}}\left|\nabla{w}_{1}^{\alpha}\right|^{2}dx\\
&\leq\,C\int_{-R}^{R}\frac{1}{\epsilon+h_{1}(x_{1})-h_{2}(x_{1})}dx_{1}+C\\
&\leq\,C\int_{0}^{R}\frac{1}{\epsilon+|x_{1}|^{m}}dx_{1}+C\\
&\leq\,C\epsilon^{\frac{1}{m}-1},\qquad\,\alpha=1,2.
\end{align*}
Using \eqref{energy_w*} again, we have
\begin{eqnarray*}
a_{11}^{11}&=&
\int_{\widetilde{\Omega}}\left(\mathbb{C}^{0}e\left(v_{1}^{1}\right),
e\left(v_{1}^{1}\right)\right)dx
\geq\frac{1}{C}\int_{\widetilde{\Omega}}\left|e\left(v_{1}^{1}\right)
\right|^{2}dx\\
&\geq&
\frac{1}{2C}\int_{\widetilde{\Omega}}\left|e\left(\bar u_{1}^{1}\right)
\right|^{2}dx
-C \int_{\widetilde{\Omega}}\left|e\left(w_{1}^{1}\right)
\right|^{2}dx\\
&\geq&
\frac 1{2C}
\int_{\widetilde{\Omega}}\left|e\left(\bar u_{1}^{1}\right)
\right|^{2}dx
-C.
\end{eqnarray*}
In view of \eqref{inequality}, we have
\begin{align*}
\int_{\widetilde{\Omega}}
\left|e\left(\bar u_{1}^{1}\right)
\right|^{2}
dx
&\geq
\frac 14
\int_{\widetilde\Omega} |\partial_{x_2}\bar u|^2 dx
\ge \frac{1}{C}\int_{\Omega_R} \frac{dx}
{  (\epsilon+h_1(x_1)-h_2(x_1))^2}\\
&\geq\frac{1}{C}\int_{0}^{R}\frac{1}{\epsilon+|x_{1}|^{m}}dx_{1}+C\\
&\geq\frac{\epsilon^{\frac{1}{m}-1}}{C}.
\end{align*}
Thus
$$
a_{11}^{11}
\geq\frac{\epsilon^{\frac{1}{m}-1}}{C}.
$$
Similarly, we have
$$
a_{11}^{22}
\geq\frac{\epsilon^{\frac{1}{m}-1}}{C}.
$$
By the argument as in the proof of Lemma \ref{lem_a_11}, we have
$$\frac{\epsilon^{\frac{2}{m}-2}}{C}\leq\det{a}_{11}\leq\,C\epsilon^{\frac{2}{m}-2}.$$
Then, we have
$$|C_{1}^{\alpha}-C_{2}^{\alpha}|\leq\,C\epsilon^{1-\frac{1}{m}},\quad\alpha=1,2.$$
The proof of Theorem \ref{mainthm1*} is finished.

\section{Appendix: Some results on the Lam\'{e} system with infinity coefficients}\label{sec_appendix}

Assume that in $\mathbb{R}^{d}$, $\Omega$ and $\omega$ are bounded open sets with smooth boundaries satisfying
$$\overline{\omega}=\cup_{s=1}^{m}\overline{\omega}_{s}\subset\Omega,$$
where $\{\omega_{s}\}$ are connected components of $\omega$. Clearly, $m<\infty$ and $\omega_{s}$ is open for all $1\leq{s}\leq{m}$. Given
$\varphi\in{C}^{1, \gamma}(\partial\Omega;\mathbb{R}^{d})$,
$0<\gamma<1$,
 $\mu>0$, $d\lambda+2\mu>0$, and $$\mu_{n}^{(s)}\rightarrow\infty,~~d\lambda_{n}^{(s)}+2\mu_{n}^{(s)}\rightarrow\infty,\quad\mbox{as}~~n\rightarrow\infty.$$ We denote
$$\mathbb{C}_{n}^{(s)}
:=\lambda_{n}^{(s)}\delta_{ij}\delta_{kl}+\mu_{n}^{(s)}\left(\delta_{ik}\delta_{jl}+\delta_{il}\delta_{jk}\right),\quad\,1\leq\,s\leq\,m,$$
$$\mathbb{C}^{(0)}:
=\lambda\delta_{ij}\delta_{kl}+\mu\left(\delta_{ik}\delta_{jl}+\delta_{il}\delta_{jk}\right),\quad\quad\quad\quad\quad\quad\quad$$
and
$$\mathbb{C}_{n}(x)=
\begin{cases}
\mathbb{C}_{n}^{(s)},&\mbox{in}~~\omega_{s},~~1\leq\,s\leq\,m,\\
\mathbb{C}^{(0)},&\mbox{in}~~\Omega\setminus\overline{\omega}.
\end{cases}$$
Consider for every $n$
\begin{equation}\label{5.1}
\begin{cases}
\nabla\cdot\left(\mathbb{C}_{n}e(u_{n})\right)=0,
&\mbox{in}~\Omega,\\
u=\mathbf{\varphi},&\mbox{on}~\partial{\Omega}.
\end{cases}
\end{equation}

Let $\Psi$ be the linear space of
rigid displacements of $\mathbb{R}^{d}$, i.e.
the set of all vector -valued functions
$\eta=(\eta^1, \cdots, \eta^d)^T$ such that
$\eta=a+Ax$, where $a=(a_1, \cdots, a_d)^T$
is a vector with constant real components, $A$ is
a skew-symmetric $(d\times d)$-matrix
with real constant elements. It is easy to see that
$\Psi$ is a linear space of dimension
$d(d+1)/2$.
Denote
$$
\Psi=\mathrm{span}
\left\{~\psi^\alpha\ |\ \ 1\le \alpha \le \frac {d(d+1) }2 \right\}.
$$

Equation \eqref{5.1} can be rewritten in the following form to emphasize the transmission condition on $\partial\omega$:
\begin{equation}\label{5.2}
\begin{cases}
\nabla\cdot\left(\mathbb{C}_{n}^{(s)}e(u_{n})\right)=0,&\mbox{in}~\omega_{s},~1\leq\,s\leq\,m,\\\\
\nabla\cdot\left(\mathbb{C}^{(0)}e(u_{n})\right)=0,&\mbox{in}~
\Omega\setminus\overline \omega,\\\\
\dfrac{\partial{u}_{n}}{\partial\nu_{0}}\Big|_{+}\cdot\psi^{\alpha}=
\dfrac{\partial{u}_{n}}{\partial\nu_{0}}\Big|_{-}\cdot\psi^{\alpha},&\mbox{on}~\partial\omega_{s},~1\leq\,s\leq\,m;\ \
1\le \alpha\le \frac{ d(d+1)}2,\\
\end{cases}
\end{equation}
where
$$\frac{\partial{u}_{n}}
{\partial\nu_{0}}\bigg|_{+}:=\left(\mathbb{C}^{(0)}e(u)\right)\vec{n}
=\lambda\left(\nabla\cdot{u}_{n}\right)\vec{n}+\mu\left(\nabla{u}_{n}
+(\nabla{u}_{n})^{T}\right)\vec{n},\quad\mbox{on}~\partial\omega_{s},$$
$$\frac{\partial{u}_{n}}{\partial\nu_{0}}\bigg|_{-}:=\left(\mathbb{C}_{n}^{(s)}e(u)\right)\vec{n}
=\lambda_{n}^{(s)}\left(\nabla\cdot{u}_{n}\right)\vec{n}+
\mu_{n}^{(s)}\left(\nabla{u}_{n}+(\nabla{u}_{n})^{T}\right)\vec{n},
\quad\mbox{on}~\partial\omega_{s},$$
and the subscript $\pm$ indicates the limit from outside
 and inside $\omega_{s}$, respectively.

\begin{theorem}
If $u_{n}\in{H}^{1}(\Omega
;\mathbb{R}^{d}
)$ is a solution of equation \eqref{5.1}, then
$u_{n}\in{C}^{1}(\overline{\Omega\setminus\omega}
;\mathbb{R}^{d}
)\cap{C}^{1}(\overline{\omega}
;\mathbb{R}^{d}
)$ and satisfies equation \eqref{5.2}.

If $u_{n}\in{C}^{1}(\overline{\Omega\setminus\omega}
;\mathbb{R}^{d}
)\cap{C}^{1}(\overline{\omega}
;\mathbb{R}^{d}
)$ is a solution of equation \eqref{5.2}, then $u_{n}\in{H}^{1}(\Omega
;\mathbb{R}^{d}
)$ and satisfies equation \eqref{5.1}.
\end{theorem}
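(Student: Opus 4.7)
My plan is to treat the two implications separately, using standard elliptic regularity for the forward direction and integration by parts for the reverse direction.

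For the forward direction, suppose $u_n\in H^1(\Omega;\mathbb{R}^d)$ is a weak solution of \eqref{5.1}. I would first observe that the restrictions $u_n|_{\omega_s}$ and $u_n|_{\Omega\setminus\overline\omega}$ have $H^{1/2}$ traces on the smooth interface $\partial\omega_s$, and that these traces necessarily agree since $u_n$ is a single $H^1$ function across the interface. In the interior of each region $\omega_s$ and of $\Omega\setminus\overline\omega$, the coefficients are constant, so the Lam\'e system is uniformly elliptic and interior Schauder/ADN estimates (\cite{adn1,adn}) give $u_n\in C^\infty$. Near $\partial\omega_s$, I would view $u_n$ as solving a Dirichlet problem from each side with the common trace as boundary data; by a bootstrap argument starting from the $H^{1/2}$ trace and iterating the ADN boundary regularity theorem (using the $C^{2,\gamma}$ regularity of $\partial\omega_s$), one upgrades to $C^{1,\gamma}$ up to the interface from each side. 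Near $\partial\Omega$, $\varphi\in C^{1,\gamma}$ together with ADN boundary regularity gives $C^{1,\gamma}$ up to $\partial\Omega$. This establishes the claimed $C^1$ regularity on $\overline{\Omega\setminus\omega}$ and $\overline\omega$ separately. The transmission condition then follows by fixing an arbitrary $\phi\in C_c^\infty(\Omega;\mathbb{R}^d)$, splitting the weak formulation
\[
\int_\Omega(\mathbb{C}_n e(u_n),e(\phi))\,dx=0
\]
as $\int_{\Omega\setminus\overline\omega}+\sum_{s=1}^m\int_{\omega_s}$ and integrating by parts on each piece. The bulk terms vanish by \eqref{5.2}$_{1,2}$ and what remains is a sum of boundary integrals $\sum_s\int_{\partial\omega_s}\bigl(\tfrac{\partial u_n}{\partial\nu_0}|_- -\tfrac{\partial u_n}{\partial\nu_0}|_+\bigr)\cdot\phi\,dS$, which by arbitrariness of $\phi$ (and density of $\{\psi^\alpha(x)\}_\alpha$ in $\mathbb{R}^d$ at each point for $d\ge 2$) yields the pointwise transmission condition stated in \eqref{5.2}$_3$.

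For the reverse direction, suppose $u_n\in C^1(\overline{\Omega\setminus\omega})\cap C^1(\overline\omega)$ is a classical solution of \eqref{5.2}, with matching traces on $\partial\omega_s$ (which I interpret as the standing continuity hypothesis of a transmission problem; otherwise the function would not admit the weak formulation). Then $u_n$ is Lipschitz on $\Omega$, hence lies in $W^{1,\infty}(\Omega)\subset H^1(\Omega)$, and its boundary trace equals $\varphi$. To verify that it is a weak solution of \eqref{5.1}, I would take any $\phi\in C_c^\infty(\Omega;\mathbb{R}^d)$ and run the previous integration by parts in reverse: applying Green's identity on $\Omega\setminus\overline\omega$ and on each $\omega_s$, the bulk terms vanish by \eqref{5.2}$_{1,2}$, while the boundary terms on $\partial\omega_s$ cancel thanks to \eqref{5.2}$_3$ (testing against the basis $\{\psi^\alpha\}$ encodes the full vectorial equality at each point). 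Density of $C_c^\infty$ in $H^1_0$ then gives the weak formulation for all test functions.

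The only subtle point is the boundary regularity at $\partial\omega_s$ in the forward direction: one must be sure the Dirichlet data obtained by tracing $u_n$ on $\partial\omega_s$ has enough regularity to bootstrap through ADN. This is handled by the standard iteration—the common trace is in $H^{1/2}$, solving the interior Dirichlet problem in $\omega_s$ and in $\Omega\setminus\overline\omega$ upgrades it to $H^{3/2}$ and eventually to $C^{1,\gamma}$ by embedding and Schauder estimates, given the $C^{2,\gamma}$ regularity of $\partial\omega_s$. Once this technical point is dispensed with, all remaining steps are straightforward applications of integration by parts.
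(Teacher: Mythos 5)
Your reverse implication and your derivation of the transmission condition from the weak formulation are fine (and the fact that the constant vectors are among the $\psi^{\alpha}$ does let you pass from the tested condition to the full vectorial equality of the conormal derivatives); this is the part the paper dismisses as standard. The genuine gap is in the forward direction, precisely at the step you yourself flag as the ``only subtle point.'' Your bootstrap does not start: the trace of $u_{n}$ on $\partial\omega_{s}$ is a priori only in $H^{1/2}(\partial\omega_{s})$, and solving the one-sided Dirichlet problem in $\omega_{s}$ (or in $\Omega\setminus\overline{\omega}$) with $H^{1/2}$ data returns exactly an $H^{1}$ solution --- not $H^{3/2}$. ADN boundary regularity improves the solution only if the boundary data are more regular, and here the boundary data \emph{are} the trace of the unknown, whose regularity is limited by the very interface regularity you are trying to prove. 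So the proposed iteration ``$H^{1/2}$ trace $\Rightarrow H^{3/2}$ solution $\Rightarrow$ better trace $\Rightarrow\cdots$'' is circular and its first step is false.

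The regularity up to the interface from each side cannot be obtained by one-sided arguments at all: it genuinely uses the transmission structure (continuity of the displacement together with the divergence-form coupling of the fluxes across $\partial\omega_{s}$), for instance by flattening the interface and using tangential difference quotients, or by the perturbation/piecewise-smooth-coefficient theory of Li and Nirenberg. This is exactly why the paper does not argue as you do but simply invokes Proposition 1.4 of \cite{ln} for the first part of the theorem, and only the remaining integration-by-parts bookkeeping (your second half) is treated as routine. To repair your proof you should either quote that result (or the analogous transmission-problem regularity theorem) or supply a genuine interface-regularity argument; everything downstream of the regularity statement in your proposal can then stand as written.
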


\begin{proof}
The first part of the theorem follows from Proposition 1.4 of \cite{ln}. The proof of the rest is standard.
\end{proof}

\begin{theorem}
There exists at most one solution $u_{n}\in{H}^{1}(\Omega
;\mathbb{R}^{d}
)$
 to equation \eqref{5.1}.
\end{theorem}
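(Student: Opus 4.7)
The plan is to argue by the standard energy method, using the weak formulation of \eqref{5.1} together with ellipticity of $\mathbb{C}_n$ on each piece and the First Korn inequality (Lemma \ref{lemmaA}). Suppose $u_n,\tilde u_n\in H^1(\Omega;\mathbb{R}^d)$ are two solutions. Since both take the boundary value $\varphi$ in the trace sense, the difference $w:=u_n-\tilde u_n$ lies in $H^1_0(\Omega;\mathbb{R}^d)$ and is a weak solution of $\nabla\cdot(\mathbb{C}_n e(w))=0$ in $\Omega$. Thus, testing the weak formulation against $w$ itself (which is legal since $w\in H^1_0$), I obtain
\begin{equation*}
\int_{\Omega}\left(\mathbb{C}_n e(w),e(w)\right)dx=0.
\end{equation*}

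The next step is to use the piecewise ellipticity of $\mathbb{C}_n$. On $\Omega\setminus\overline{\omega}$ the tensor $\mathbb{C}^{(0)}$ satisfies \eqref{coeff2} with constants $\min\{2\mu,d\lambda+2\mu\}>0$, while on each $\omega_s$ the tensor $\mathbb{C}_n^{(s)}$ satisfies \eqref{coeff2} with constants $\min\{2\mu_n^{(s)},d\lambda_n^{(s)}+2\mu_n^{(s)}\}>0$. Splitting the integral over these subregions, each term is nonnegative, and the vanishing of the sum forces $e(w)=0$ almost everywhere in $\Omega$.

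With $e(w)=0$ in hand, I apply the First Korn inequality (Lemma \ref{lemmaA}) to $w\in H^1_0(\Omega;\mathbb{R}^d)$, which yields
\begin{equation*}
\|\nabla w\|_{L^2(\Omega)}^2\leq 2\|e(w)\|_{L^2(\Omega)}^2=0,
\end{equation*}
so $\nabla w\equiv 0$. Together with the zero trace on $\partial\Omega$ this gives $w\equiv 0$, and hence $u_n=\tilde u_n$. There is no serious obstacle: the only point requiring a moment of care is that the weak formulation holds across the discontinuity surfaces $\partial\omega_s$ (so no boundary integrals appear when integrating by parts globally), but this is precisely the content of the weak $H^1$ formulation of \eqref{5.1} and is automatic once both solutions are taken in $H^1(\Omega;\mathbb{R}^d)$.
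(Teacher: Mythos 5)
Your argument is correct and is essentially the paper's own proof: the paper reduces to $\varphi=0$ by linearity (equivalent to your taking the difference of two solutions), tests the weak formulation against the solution itself via density of $C_c^{\infty}$ in $H_0^{1}$, and then uses the pointwise ellipticity of $\mathbb{C}_n$ together with the First Korn inequality to conclude $\nabla w=0$ and hence $w=0$. No substantive difference in method or in the points requiring care.
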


\begin{proof}
We only need to prove that if $\varphi=0$ then a solution $u_{n}$ of \eqref{5.1} is zero. Indeed it follows from \eqref{5.1} that
$$\int_{\Omega}\left(\mathbb{C}_{n}e(u_{n}),e(\psi)\right)
dx=0,\quad\forall~~\psi\in{C}_{c}^{\infty}(\Omega
;\mathbb{R}^{d}
).$$
This implies by density of $C_{c}^{\infty}(\Omega
;\mathbb{R}^{d}
)$ in $H_{0}^{1}(\Omega
;\mathbb{R}^{d}
)$ that $\int_{\Omega}\left(\mathbb{C}_{n}e(u_{n}),e(u_{n})\right)
dx=0$. By the property of $\mathbb{C}_{n}$ and the First Korn inequality, we have $\nabla{u}_{n}=0$, and therefore $u_{n}=0$.
\end{proof}

Define the functional
\begin{equation}\label{5.3}
I_{n}[v]:=\frac{1}{2}\int_{\Omega}\left(\mathbb{C}_{n}(x)e(v),e(v)\right)dx,
\end{equation}
where $v$ belongs to the set
$$H_{\varphi}^{1}(\Omega;\mathbb{R}^{d})
:=\left\{v\in{H}^{1}(\Omega;\mathbb{R}^{d})~\bigg|~v=
\varphi,~\mbox{on}~\partial\Omega~\right\},
$$
where $\varphi\in C^{1,\gamma}(\partial \Omega;\mathbb{R}^{d})$,
$0<\gamma<1$.

\begin{theorem}\label{thm_5.3}
For every $n$, there exists a minimizer $u_{n}\in{H}_{\varphi}^{1}(\Omega;\mathbb{R}^{d})$ satisfying
$$I_{n}[u_{n}]:=\min_{v\in{H}_{\varphi}^{1}(\Omega;\mathbb{R}^{d})}I_{n}[v].$$
Moreover, $u_{n}\in{H}^{1}(\Omega;\mathbb{R}^{d})$ is a solution of equation \eqref{5.1}.
\end{theorem}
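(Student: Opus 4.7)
The plan is a standard application of the direct method of the calculus of variations, with Korn's inequality playing the central role since $\mathbb{C}_n$ is piecewise constant and we only control the symmetrized gradient $e(v)$.

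First I would reduce to a problem with zero boundary data. Extend $\varphi \in C^{1,\gamma}(\partial\Omega;\mathbb{R}^d)$ to some $\Phi \in H^1(\Omega;\mathbb{R}^d)$ (for instance, the harmonic extension), and write any $v \in H^1_\varphi(\Omega;\mathbb{R}^d)$ as $v = \Phi + w$ with $w \in H^1_0(\Omega;\mathbb{R}^d)$. For fixed $n$, the tensor $\mathbb{C}_n(x)$ is uniformly elliptic on $\Omega$: by (\ref{coeff2}),
$$
c_n |A|^2 \le (\mathbb{C}_n(x) A, A) \le C_n |A|^2
\qquad \text{for every symmetric } A,
$$
with $c_n := \min\{2\mu,\, d\lambda+2\mu,\, 2\mu_n^{(s)},\, d\lambda_n^{(s)}+2\mu_n^{(s)} : 1\le s\le m\} > 0$ and a corresponding $C_n < \infty$. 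Combined with the First Korn inequality (Lemma \ref{lemmaA}) applied to $w \in H^1_0(\Omega;\mathbb{R}^d)$, this gives
$$
I_n[\Phi + w] \ge \frac{c_n}{2}\|e(w)\|_{L^2(\Omega)}^2 - C_n\|e(\Phi)\|_{L^2(\Omega)}\|e(w)\|_{L^2(\Omega)} \ge \frac{c_n}{8}\|\nabla w\|_{L^2(\Omega)}^2 - C,
$$
so $I_n$ is bounded below on $H^1_\varphi(\Omega;\mathbb{R}^d)$ and is coercive in $w$.

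Next I would take a minimizing sequence $v_k = \Phi + w_k$ with $I_n[v_k] \to \inf I_n$. Coercivity gives $\|w_k\|_{H^1_0(\Omega)} \le C(n)$, so along a subsequence $w_k \rightharpoonup w_*$ weakly in $H^1_0(\Omega;\mathbb{R}^d)$; set $u_n := \Phi + w_*$, which lies in $H^1_\varphi(\Omega;\mathbb{R}^d)$ since the trace is weakly continuous. The quadratic form $v \mapsto \int_\Omega (\mathbb{C}_n e(v), e(v))\,dx$ is convex and continuous on $H^1(\Omega;\mathbb{R}^d)$ (the ellipticity bound shows it defines an equivalent semi-norm on the linear subspace $H^1_0$ after adding Korn), hence weakly lower semicontinuous. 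Therefore $I_n[u_n] \le \liminf_k I_n[v_k] = \inf_{H^1_\varphi} I_n$, and $u_n$ is a minimizer. Uniqueness is not needed here but would follow by strict convexity modulo the constraint.

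Finally, to verify that $u_n$ solves \eqref{5.1}, I would compute the first variation. For any $\psi \in C_c^\infty(\Omega;\mathbb{R}^d)$ and $t \in \mathbb{R}$, the function $u_n + t\psi$ lies in $H^1_\varphi(\Omega;\mathbb{R}^d)$, so the map $t \mapsto I_n[u_n + t\psi]$, being a quadratic polynomial in $t$, is minimized at $t=0$. Differentiating and using the symmetry of $\mathbb{C}_n$ yields
$$
\int_\Omega \bigl(\mathbb{C}_n(x) e(u_n), e(\psi)\bigr)\,dx = 0 \qquad \text{for all } \psi \in C_c^\infty(\Omega;\mathbb{R}^d),
$$
which is precisely the weak formulation of $\nabla\cdot(\mathbb{C}_n e(u_n)) = 0$ in $\Omega$; combined with $u_n|_{\partial\Omega} = \varphi$, this is \eqref{5.1}. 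The main (minor) subtlety is ensuring coercivity in the presence of the piecewise structure of $\mathbb{C}_n$, but this is immediate once one observes that $c_n > 0$ for each fixed $n$ so that the First Korn inequality on $H^1_0(\Omega;\mathbb{R}^d)$ suffices; no use of the more delicate Lemmas \ref{lemmaB}--\ref{lemmaD} is required at this stage.
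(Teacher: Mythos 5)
Your proposal is correct and follows essentially the same route the paper indicates for Theorem \ref{thm_5.3}: the direct method, with weak lower semicontinuity of the quadratic functional and the First Korn inequality on $H^1_0(\Omega;\mathbb{R}^d)$ providing coercivity (valid for each fixed $n$ since $\mathbb{C}_n$ is uniformly elliptic), followed by the first variation to recover the weak form of \eqref{5.1}. Your write-up simply fills in the standard details the paper leaves to the reader.
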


The proof of Theorem \ref{thm_5.3} is standard. The existence of a minimizer $u_{n}$ follows from the lower semi-continuity property of the functional with respect to the weak convergence in $H^{1}(\Omega;\mathbb{R}^{d})$ and the First Korn inequality.

Comparing equation \eqref{5.1}, the Lam\'{e} system with infinity coefficients
is
\begin{equation}\label{5.4}
\begin{cases}
\nabla\cdot\left(\mathbb{C}^{(0)}e(u)\right)=0,&\mbox{in}~\Omega\setminus
\overline \omega,\\
u\big|_{+}=u\big|_{-},&\mbox{on}~\partial\omega,\\
e(u)=0,&\mbox{in}~\omega,\\
\int_{\partial\omega_{s}}\dfrac{\partial{u}}{\partial\nu_{0}}\Big|_{+}\cdot\psi^{\alpha}=0,&1\le s\le m; \ ~ 1\le \alpha\le \frac {d(d+1)}2,\\
u=\varphi,&\mbox{on}~\partial\Omega.
\end{cases}
\end{equation}
We have similar results:

\begin{theorem}
If $u\in{H}^{1}(\Omega;\mathbb{R}^{d})$ satisfies
 \eqref{5.4} except for the fourth line, then
$u\in{C}^{1}(\overline{\Omega\setminus\omega}
;\mathbb{R}^{d}
)\cap{C}^{1}(\overline{\omega}
;\mathbb{R}^{d}
)$.
\end{theorem}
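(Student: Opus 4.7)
The plan is to establish the two regularity statements separately: $C^1$ regularity inside $\omega$ and $C^1$ regularity up to the boundary in $\Omega\setminus\overline{\omega}$.

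First I would handle the interior of the inclusions. By the third line of \eqref{5.4}, $e(u)=0$ on each connected component $\omega_s$, and $u\in H^1(\omega_s;\mathbb{R}^d)$. The standard characterization of $H^1$ fields with vanishing symmetric gradient on connected domains (cf.\ the analogous two-dimensional statement recalled in the introduction and Theorem 2.3 of \cite{osy}) then forces $u=a_s+A_s x$ on $\omega_s$ for some constant vector $a_s\in\mathbb{R}^d$ and some constant skew-symmetric matrix $A_s$, i.e.\ $u|_{\omega_s}\in\Psi$. In particular $u|_{\overline{\omega_s}}$ is affine, hence $C^\infty$, and taking the union over the finitely many components $s=1,\ldots,m$ gives $u\in C^1(\overline{\omega};\mathbb{R}^d)$.

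Next I would treat the exterior region $\Omega\setminus\overline{\omega}$, on which $u$ solves the constant-coefficient Lam\'e system $\nabla\cdot(\mathbb{C}^{(0)}e(u))=0$, which is Petrovskii elliptic. The boundary of $\Omega\setminus\overline{\omega}$ decomposes into the two disjoint smooth pieces $\partial\Omega$ and $\partial\omega=\cup_s\partial\omega_s$. On $\partial\Omega$ the Dirichlet data is $\varphi\in C^{1,\gamma}$. On each $\partial\omega_s$, the trace-matching condition $u|_+=u|_-$ together with the first step forces the exterior trace to equal $(a_s+A_s x)|_{\partial\omega_s}$, which is smooth. Applying the Agmon--Douglis--Nirenberg boundary Schauder theory for elliptic systems (\cite{adn1,adn}) to this Dirichlet problem with $C^{1,\gamma}$ data on a smooth boundary yields $u\in C^{1,\gamma}(\overline{\Omega\setminus\omega};\mathbb{R}^d)$, which is strictly stronger than the required $C^1$ regularity.

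Combining the two steps establishes $u\in C^1(\overline{\Omega\setminus\omega};\mathbb{R}^d)\cap C^1(\overline{\omega};\mathbb{R}^d)$. There is no substantive obstacle: both ingredients are classical. The only point to be careful about is the passage of smoothness from the interior side to the exterior Dirichlet problem through the interface condition $u|_+=u|_-$, and verifying that the $H^1$ solution on $\Omega\setminus\overline{\omega}$ indeed attains the prescribed smooth trace on $\partial\omega_s$ so that the ADN estimates are applicable.
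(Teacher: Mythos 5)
Your proposal is correct and follows essentially the same route as the paper: in $\omega$ the condition $e(u)=0$ forces $u$ to be a rigid displacement (hence smooth up to $\partial\omega$), and then the exterior region is treated as a Dirichlet problem for the constant-coefficient Lam\'e system with smooth data on $\partial\omega$ and $C^{1,\gamma}$ data $\varphi$ on $\partial\Omega$, to which the Agmon--Douglis--Nirenberg theory applies. The extra care you flag about the $H^1$ solution attaining its traces is a fair point of diligence but does not change the argument, which matches the paper's.
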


\begin{proof}
By the third line of equation \eqref{5.4}, $u$ is a linear
combination of $\{\psi^\alpha\}$,
 and therefore $u\in{C}^{\infty}(\partial\omega)$. Since $\nabla\cdot\left(\mathbb{C}^{(0)}e(u)\right)=0$ on $\Omega\setminus\overline{\omega}$, the regularity of $u$ in $\overline{\Omega\setminus\omega}$ follows from \cite{adn}.
\end{proof}

\begin{theorem}\label{theorem5.6}
There exists at most one solution $u\in{H}^{1}(\Omega
;\mathbb{R}^{d}
)\cap{C}^{1}(\overline{\Omega\setminus\omega}
;\mathbb{R}^{d}
)\cap{C}^{1}(\overline{\omega}
;\mathbb{R}^{d}
)$ of  \eqref{5.4}.
\end{theorem}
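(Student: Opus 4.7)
The plan is to set $w = u_1 - u_2$ for two hypothetical solutions sharing boundary data $\varphi$ and to show $w \equiv 0$ via an energy identity. The function $w$ lies in $H^1(\Omega;\mathbb{R}^d) \cap C^1(\overline{\Omega\setminus\omega};\mathbb{R}^d) \cap C^1(\overline{\omega};\mathbb{R}^d)$ and solves the homogeneous analogue of \eqref{5.4}; in particular $w = 0$ on $\partial\Omega$, $e(w) = 0$ in $\omega$, $\nabla\cdot(\mathbb{C}^{(0)} e(w)) = 0$ in $\Omega\setminus\overline{\omega}$, the matching $w|_+ = w|_-$ on $\partial\omega$, and $\int_{\partial\omega_s}(\partial w/\partial\nu_0)|_+ \cdot \psi^\alpha = 0$ for every $s$ and $\alpha$.

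First I would multiply the PDE on $\Omega\setminus\overline{\omega}$ by $w$ and apply Green's identity on this Lipschitz domain, using the $C^1$-regularity of $w$ up to each side of $\partial\omega$. Keeping careful track of the outward normal of $\Omega\setminus\overline{\omega}$ (which on $\partial\omega_s$ is $-\vec{n}$ with $\vec{n}$ the outer normal of $\omega_s$), this yields
\begin{equation*}
\int_{\Omega\setminus\overline{\omega}} \bigl(\mathbb{C}^{(0)} e(w),\, e(w)\bigr)\, dx = \int_{\partial\Omega} \frac{\partial w}{\partial\nu_0}\cdot w\, dS - \sum_{s=1}^m \int_{\partial\omega_s} \frac{\partial w}{\partial\nu_0}\bigg|_+ \cdot\, w\, dS.
\end{equation*}
The boundary integral over $\partial\Omega$ drops out since $w=0$ there. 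On each connected component $\omega_s$ the condition $e(w)=0$ forces $w|_{\omega_s} = \sum_\alpha c_s^\alpha \psi^\alpha$ for some constants $c_s^\alpha$, and by $w|_+ = w|_-$ this is also the outer trace on $\partial\omega_s$. The orthogonality relations from the fourth line of \eqref{5.4}, applied to $w$ by linearity, then give
\begin{equation*}
\int_{\partial\omega_s} \frac{\partial w}{\partial\nu_0}\bigg|_+ \cdot\, w\, dS = \sum_\alpha c_s^\alpha \int_{\partial\omega_s} \frac{\partial w}{\partial\nu_0}\bigg|_+ \cdot \psi^\alpha\, dS = 0.
\end{equation*}

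The ellipticity bound \eqref{coeff2} now forces $e(w)\equiv 0$ in $\Omega\setminus\overline{\omega}$, and combined with $e(w)=0$ in $\omega$ we get $e(w)=0$ a.e.\ in $\Omega$. Since $w\in H^1(\Omega;\mathbb{R}^d)$, on each connected component of $\Omega$ the function $w$ coincides with an element of $\Psi$. The Dirichlet condition $w=0$ on the $(d-1)$-dimensional hypersurface $\partial\Omega$ is incompatible with any nonzero rigid displacement (whose zero set has codimension $\geq 1$ in $\mathbb{R}^d$ and in particular cannot contain a smooth hypersurface), so $w\equiv 0$ on $\Omega$, proving uniqueness.

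No step of this plan is genuinely difficult; the only point requiring care is the direction of the normal in the interface integrals and the recognition that the orthogonality conditions in \eqref{5.4} are exactly what is needed to annihilate the interface boundary terms. The argument uses nothing beyond the definition of $\Psi$, the ellipticity \eqref{coeff2}, and a standard Green identity on the Lipschitz domain $\Omega\setminus\overline{\omega}$.
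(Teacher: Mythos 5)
Your proof is correct and follows essentially the same route as the paper: an energy identity on $\Omega\setminus\overline{\omega}$ in which the interface integrals over $\partial\omega_{s}$ vanish precisely because the trace of the difference there is a rigid displacement and the fourth line of \eqref{5.4}, applied to the difference by linearity, kills each $\psi^{\alpha}$-component. The only divergence is in the concluding step: the paper (working with $\varphi=0$) passes from $\int_{\Omega\setminus\overline{\omega}}\left(\mathbb{C}^{(0)}e(u),e(u)\right)dx=0$ to $u\equiv0$ via the Second Korn inequality (Lemma \ref{lemmaD}) on $\Omega\setminus\overline{\omega}$, whereas you note that $e(w)=0$ a.e.\ in all of $\Omega$ (the interface has measure zero and $w\in H^{1}(\Omega;\mathbb{R}^{d})$), so $w$ is a rigid displacement on each connected component of $\Omega$ and the Dirichlet condition on $\partial\Omega$ forces $w\equiv0$; this variant is slightly more elementary, trading the Korn-type inequality for a rigidity argument. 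One small repair in that last argument: the zero set of a nonzero rigid displacement $a+Ax$ is empty or an affine subspace of codimension at least two (a nonzero skew-symmetric $A$ has rank $\geq2$), and that is the clean reason it cannot contain the $(d-1)$-dimensional hypersurface $\partial\Omega$; ``codimension $\geq1$'' alone would not exclude a flat piece, although it still suffices here because $\partial\Omega$ bounds a bounded open set and hence cannot lie in a hyperplane.
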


\begin{proof}
It is equivalent to showing that if $\varphi=0$, equation \eqref{5.4} only has the solution $u=0$. We know from the third and the second lines of equation \eqref{5.4} that $u|_{\partial\omega_{s}}$ is a linear combination of $\{\psi^{\alpha}\}$. Multiplying the first line of equation \eqref{5.4} by $u$ and integrating by parts leads to, using a version of the Second Korn inequality (Lemma \ref{lemmaD}),
$$0=\int_{\Omega\setminus\overline{\omega}}\left(\mathbb{C}^{(0)}e(u),e(u)\right)dx\geq
\frac{1}{C}\int_{\Omega\setminus\overline{\omega}}|e(u)|^{2}dx\geq
\frac{1}{C}\int_{\Omega\setminus\overline{\omega}}|\nabla{u}|^{2}dx.
$$
It follows that $u=0$.
\end{proof}

The existence of a solution
 can be obtained by using the variational method.

Define the energy functional
\begin{equation}\label{5.5}
I_{\infty}[v]:=\frac{1}{2}\int_{\Omega\setminus\overline{\omega}}\left(\mathbb{C
}^{(0)}e(u),e(u)\right)dx,
\end{equation}
where $v$ belongs to the set
$$\mathcal{A}:=\left\{u\in{H}_{\varphi}^{1}(\Omega;\mathbb{R}^{d})
~\big|~e(u)=0~~\mbox{in}~\omega\right\}.$$
\begin{theorem}\label{thm_minimizer_infity}
There exists a minimizer $u\in\mathcal{A}$ satisfying
$$I_{\infty}[u]=\min_{v\in\mathcal{A}}I_{\infty}[v].$$
Moreover, $u\in{H}^{1}(\Omega
;\mathbb{R}^{d}
)\cap{C}^{1}(\overline{\Omega\setminus\omega}
;\mathbb{R}^{d}
)\cap{
C}^{1}(\overline{\omega}
;\mathbb{R}^{d}
)$ is a solution of equation \eqref{5.4}.
\end{theorem}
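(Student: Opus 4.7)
I would use the direct method of the calculus of variations, in the same spirit as \cite{bly1}, and then extract the Euler--Lagrange conditions corresponding to each line of \eqref{5.4}.

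\textbf{Step 1: Existence of a minimizer.} First I would check that $\mathcal{A}$ is non-empty: since $\overline\omega\subset\Omega$, pick any $\Phi\in H^1_\varphi(\Omega;\mathbb{R}^d)$ that vanishes in a neighbourhood of $\overline\omega$; then $e(\Phi)=0$ in $\omega$ and $\Phi\in\mathcal{A}$. Let $\{u_k\}\subset\mathcal{A}$ be a minimizing sequence. The key step is an $H^1(\Omega)$ bound. On the perforated domain $\Omega\setminus\overline\omega$ apply Lemma \ref{lemmaD} to the closed subspace $V=\{v\in H^1(\Omega\setminus\overline\omega;\mathbb{R}^d):v|_{\partial\Omega}=0\}$; note $V\cap\Psi=\{0\}$ since a rigid displacement vanishing on the $(d-1)$-dimensional surface $\partial\Omega$ must vanish identically. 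Applied to $u_k-\Phi\in V$ this gives
\[
\|u_k-\Phi\|_{H^1(\Omega\setminus\overline\omega)}\le C\|e(u_k-\Phi)\|_{L^2(\Omega\setminus\overline\omega)}\le C\bigl(I_\infty[u_k]^{1/2}+\|e(\Phi)\|_{L^2}\bigr)\le C.
\]
By the trace theorem $\|u_k\|_{L^2(\partial\omega_s)}\le C$ for every $s$. Inside each component $\omega_s$ we have $u_k=\sum_\alpha c_{k,s}^\alpha\psi^\alpha$; since $\{\psi^\alpha|_{\partial\omega_s}\}$ is linearly independent, the coefficients $c_{k,s}^\alpha$ are uniformly bounded, whence $\|u_k\|_{H^1(\omega_s)}\le C$. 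Thus $\{u_k\}$ is bounded in $H^1(\Omega;\mathbb{R}^d)$, and along a subsequence $u_k\rightharpoonup u$. The trace passes to the limit so $u=\varphi$ on $\partial\Omega$; weak continuity of $e$ in $L^2(\omega)$ gives $e(u)=0$ in $\omega$; hence $u\in\mathcal{A}$. Weak lower semicontinuity of $I_\infty$, which follows from the non-negativity of the quadratic form and the ellipticity estimate \eqref{coeff2}, yields $I_\infty[u]\le\liminf_k I_\infty[u_k]=\inf_\mathcal{A} I_\infty$, so $u$ is a minimizer.

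\textbf{Step 2: Euler--Lagrange conditions.} The set of admissible perturbations is $\mathcal{T}=\{\eta\in H^1_0(\Omega;\mathbb{R}^d):e(\eta)=0\text{ in }\omega\}$, i.e. $\eta$ is a rigid displacement on each $\omega_s$. For any $\eta\in\mathcal{T}$ we have $u+t\eta\in\mathcal{A}$ and so $\frac{d}{dt}I_\infty[u+t\eta]|_{t=0}=0$, namely
\[
\int_{\Omega\setminus\overline\omega}\bigl(\mathbb{C}^{(0)}e(u),e(\eta)\bigr)\,dx=0\qquad\forall\,\eta\in\mathcal{T}.
\]
Taking $\eta\in C_c^\infty(\Omega\setminus\overline\omega;\mathbb{R}^d)$ gives the first line of \eqref{5.4}. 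The second, third and fifth lines hold by construction (continuity of the $H^1$ trace across $\partial\omega$, membership of $u$ in $\mathcal{A}$, and the Dirichlet condition on $\partial\Omega$). For the fourth line, fix $s$ and $\alpha$, and construct a smooth $\eta_{s,\alpha}\in\mathcal{T}$ equal to $\psi^\alpha$ on $\omega_s$, equal to $0$ on $\omega_j$ for $j\neq s$, and equal to $0$ near $\partial\Omega$; this is possible because the $\overline\omega_s$ are mutually disjoint and lie in $\Omega$. Integrating by parts on $\Omega\setminus\overline\omega$ and using $\nabla\cdot(\mathbb{C}^{(0)}e(u))=0$ there gives
\[
0=\int_{\Omega\setminus\overline\omega}\bigl(\mathbb{C}^{(0)}e(u),e(\eta_{s,\alpha})\bigr)\,dx=-\int_{\partial\omega_s}\frac{\partial u}{\partial\nu_0}\bigg|_{+}\!\cdot\psi^\alpha\,d\sigma,
\]
with the sign due to the outward normal of $\Omega\setminus\overline\omega$ on $\partial\omega_s$ pointing into $\omega_s$. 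This is precisely the fourth line of \eqref{5.4}.

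\textbf{Step 3: Regularity.} In $\overline\omega$, $u$ is a rigid displacement on each component and hence $C^\infty$. In $\overline{\Omega\setminus\omega}$, $u$ solves the homogeneous Lam\'e system with Dirichlet data that is a (smooth) rigid displacement on each $\partial\omega_s$ and $C^{1,\gamma}$ on $\partial\Omega$; the Agmon--Douglis--Nirenberg estimates \cite{adn} then yield $u\in C^1(\overline{\Omega\setminus\omega};\mathbb{R}^d)$.

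\textbf{Main obstacle.} The one genuinely delicate point is the $H^1(\Omega)$ bound on the minimizing sequence in Step 1: the functional $I_\infty$ only controls $e(u_k)$ on the exterior region $\Omega\setminus\overline\omega$, so to bound $\|u_k\|_{H^1(\omega)}$ one must combine the Korn-type Lemma \ref{lemmaD} on the perforated domain with the trace theorem and the finite-dimensionality of $\Psi|_{\partial\omega_s}$ to transfer control into each inclusion. Everything else is either a direct verification or a standard application of elliptic regularity.
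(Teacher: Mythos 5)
Your proposal is correct and takes essentially the same route as the paper: existence of a minimizer by the direct method (the paper compresses your Step 1 into ``lower semi-continuity and the weakly closed property of $\mathcal{A}$,'' while you usefully fill in the coercivity via Lemma \ref{lemmaD} on the perforated domain, the trace theorem and the finite-dimensionality of $\Psi\big|_{\partial\omega_s}$), followed by exactly the paper's variation argument with a test field equal to $\psi^{\alpha}$ on $\overline{\omega}_{s}$ and vanishing on the other components to obtain the fourth line of \eqref{5.4}, with regularity handled by the Agmon--Douglis--Nirenberg estimates as in the appendix. The only cosmetic difference is ordering: as in the paper, the boundary integration by parts in your Step 2 implicitly uses the $C^{1}(\overline{\Omega\setminus\omega})$ regularity of your Step 3 (or a weak formulation of the conormal derivative), so that step should be invoked first.
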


\begin{proof}
By the lower semi-continuity of $I_{\infty}$ and the weakly closed property of $\mathcal{A}$, it is not difficult to see that a minimizer $u\in\mathcal{A}$ exists and satisfies $\nabla\cdot(\mathbb{C}^{(0)}e(u))=0$ in $\Omega\setminus\overline{\omega}$. The only thing needs to shown is the fourth line of \eqref{5.4}, i.e.
$$\int_{\partial\omega_{s}}\dfrac{\partial{u}}{\partial\nu_{0}}\Big|_{+}\cdot\psi^{\alpha}=0,\quad1\leq{s}\leq{m}.$$
Indeed, since $u$ is a minimizer, for any $1\leq{s}\leq{m}$,
$1\le \alpha\le d(d+1)/2$, and any $\phi\in{C}^{\infty}_{c}(\Omega
;\mathbb{R}^{d}
)$ satisfying $\phi\equiv\psi^{\alpha}$ on $\overline{\omega}_{s}$ and $\phi=0$ on $\overline{\omega}_{t}$ $(t\neq{s})$, let
$$i(t):=I_{\infty}[u+t\phi],\quad\,t\in\mathbb{R},$$
we have
$$0=i '(0):=\frac{di}{dt}\big|_{t=0}=\int_{\Omega\setminus\overline{\omega}}\left(\mathbb{C}^{(0)}e(u),e(\phi)\right)dx.$$
Therefore
\begin{eqnarray*}
0&=&-\int_{\Omega\setminus\overline{\omega}}
\nabla\cdot\left(\mathbb{C}^{(0)}e(u)\right)\cdot\phi dx
=\int_{\Omega\setminus\overline{\omega}}\left(\mathbb{C}^{(0)}e(u),e(\phi)
\right) dx
+\int_{\partial\omega_{s}}\frac{\partial{u}}{\partial\nu_{0}}\big|_{+}\cdot\phi
\\
&=&\int_{\partial\omega_{s}}\frac{\partial{u}}{\partial\nu_{0}}\big|_{+}\cdot
\psi^{\alpha}.
\end{eqnarray*}
\end{proof}

Finally, we give the relationship between $u_{n}$ and $u$.

\begin{theorem}\label{thm_5.7}
Let $u_{n}$ and $u$ in $H^{1}(\Omega; \mathbb{R}^{d})$
be the solutions of equations \eqref{5.2} and \eqref{5.4}, respectively. Then
\begin{equation}
u_{n}
\to
{u}\quad\mbox{in}~H^{1}(\Omega; \mathbb{R}^{d}),~\mbox{as}~n\rightarrow\infty,
\label{abc1}
\end{equation}
and
\begin{equation}
\lim_{n\rightarrow\infty}I_{n}[u_{n}]=I_{\infty}[u],
\label{abc2}
\end{equation}
where $I_{n}$ and $I_{\infty}$ are defined by \eqref{5.3} and \eqref{5.5}.
\end{theorem}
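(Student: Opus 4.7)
The plan is to argue by variational compactness: extract a weak $H^{1}$ limit of $u_{n}$ along a subsequence, identify the limit as the unique minimizer $u$ of $I_{\infty}$ on $\mathcal{A}$, and then upgrade to strong convergence via energy convergence and Korn's inequality. Uniqueness from Theorem \ref{theorem5.6} will rule out the need to pass to a subsequence at the end.

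First I would construct a uniform energy bound. Let $\Phi\in C^{1,\gamma}(\overline\Omega;\mathbb{R}^{d})$ be any extension of $\varphi$ with $\mathrm{supp}\,\Phi\subset\Omega\setminus\overline\omega$ (as in Lemma \ref{lem_v3_v1+v2}); on this support $\mathbb{C}_{n}\equiv\mathbb{C}^{(0)}$, so $I_{n}[\Phi]=I_{\infty}[\Phi]\leq C$ independent of $n$. Since $u_{n}$ minimizes $I_{n}$ in $H^{1}_{\varphi}(\Omega;\mathbb{R}^{d})$ by Theorem \ref{thm_5.3},
\begin{equation*}
I_{n}[u_{n}]\leq I_{n}[\Phi]\leq C.
\end{equation*}
Using the ellipticity bound $(\mathbb{C}_{n}A,A)\geq \min\{2\mu,2\mu_{n}^{(s)}\}|A|^{2}$ on each component and the fact that eventually $\mu_{n}^{(s)}\geq \mu$, this gives $\|e(u_{n})\|_{L^{2}(\Omega)}\leq C$. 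Applying the First Korn inequality to $u_{n}-\Phi\in H_{0}^{1}(\Omega;\mathbb{R}^{d})$ then yields $\|u_{n}\|_{H^{1}(\Omega)}\leq C$. A further consequence, which will be the crucial piece of information, is that on each inclusion
\begin{equation*}
\int_{\omega_{s}}|e(u_{n})|^{2}\,dx\leq \frac{C}{\mu_{n}^{(s)}}\longrightarrow 0,\qquad n\to\infty.
\end{equation*}

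Next I would extract a subsequence (not relabeled) with $u_{n}\rightharpoonup u^{*}$ weakly in $H^{1}(\Omega;\mathbb{R}^{d})$. Trace continuity gives $u^{*}=\varphi$ on $\partial\Omega$, and lower semicontinuity of the $L^{2}$-norm of $e(\cdot)$ under weak $H^{1}$-convergence combined with the display above yields $\|e(u^{*})\|_{L^{2}(\omega_{s})}=0$, so $e(u^{*})=0$ in $\omega$ and therefore $u^{*}\in\mathcal{A}$. To identify $u^{*}$ with $u$, note that for any $v\in\mathcal{A}$ we have $e(v)=0$ in $\omega$, hence $I_{n}[v]=I_{\infty}[v]$, so
\begin{equation*}
\tfrac12\!\int_{\Omega\setminus\overline\omega}\!\bigl(\mathbb{C}^{(0)}e(u_{n}),e(u_{n})\bigr)dx\leq I_{n}[u_{n}]\leq I_{n}[v]=I_{\infty}[v].
\end{equation*}
By weak lower semicontinuity of the convex quadratic form on the left,
\begin{equation*}
I_{\infty}[u^{*}]\leq\liminf_{n\to\infty}\tfrac12\!\int_{\Omega\setminus\overline\omega}\!\bigl(\mathbb{C}^{(0)}e(u_{n}),e(u_{n})\bigr)dx\leq I_{\infty}[v],\qquad\forall v\in\mathcal{A}.
\end{equation*}
Thus $u^{*}$ minimizes $I_{\infty}$ over $\mathcal{A}$, and Theorem \ref{theorem5.6} gives $u^{*}=u$. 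Since the limit is unique, the full sequence $u_{n}\rightharpoonup u$ weakly in $H^{1}(\Omega;\mathbb{R}^{d})$.

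For the energy limit (\ref{abc2}), choosing $v=u$ in the inequality above shows $\limsup_{n} I_{n}[u_{n}]\leq I_{\infty}[u]$, while the liminf inequality gives $I_{\infty}[u]=I_{\infty}[u^{*}]\leq\liminf_{n} I_{n}[u_{n}]$; hence $I_{n}[u_{n}]\to I_{\infty}[u]$. To obtain strong $H^{1}$ convergence (\ref{abc1}), which I expect to be the main technical point, I would combine three facts: (i) $\int_{\omega_{s}}|e(u_{n})|^{2}dx\to 0=\int_{\omega_{s}}|e(u)|^{2}dx$, (ii) $I_{n}[u_{n}]\to I_{\infty}[u]$ forces $\int_{\Omega\setminus\overline\omega}(\mathbb{C}^{(0)}e(u_{n}),e(u_{n}))dx\to\int_{\Omega\setminus\overline\omega}(\mathbb{C}^{(0)}e(u),e(u))dx$ (the $\omega$-contribution to $I_{n}[u_{n}]$ is nonnegative and bounded above by $I_{n}[u_{n}]-\frac12\int_{\Omega\setminus\overline\omega}(\mathbb{C}^{(0)}e(u_{n}),e(u_{n}))dx$, which tends to zero by lower semicontinuity), and (iii) weak convergence $e(u_{n})\rightharpoonup e(u)$ in $L^{2}(\Omega)$. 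Since $(\mathbb{C}^{(0)}\cdot,\cdot)$ is a uniformly equivalent inner product on symmetric matrices, (ii) and (iii) upgrade to strong $L^{2}$-convergence of $e(u_{n})$ on $\Omega\setminus\overline\omega$; together with (i) this gives $e(u_{n})\to e(u)$ strongly in $L^{2}(\Omega)$. Finally, applying the First Korn inequality to $u_{n}-u\in H_{0}^{1}(\Omega;\mathbb{R}^{d})$ yields $\|\nabla(u_{n}-u)\|_{L^{2}(\Omega)}\leq\sqrt{2}\|e(u_{n}-u)\|_{L^{2}(\Omega)}\to 0$, completing the proof.
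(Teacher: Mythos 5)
Your proposal is correct, and its skeleton (uniform energy bound via minimality of $u_n$, weak $H^1$ compactness, vanishing of $e(u_n)$ on $\omega$ as the inclusion moduli blow up, an energy sandwich giving \eqref{abc2}, and a Korn-type upgrade from strong $L^2$ convergence of $e(u_n-u)$ to \eqref{abc1}) matches the paper's. The one genuinely different ingredient is the identification of the weak limit $u^{*}$ with $u$: the paper verifies directly that $u^{*}$ satisfies every line of \eqref{5.4}, in particular the flux conditions \eqref{5.8}, by a Green's identity argument with test functions $\rho$ equal to $\psi^{\alpha}$ on $\partial\omega_s$, exploiting the transmission conditions in \eqref{5.2} satisfied by $u_n$; you instead show $u^{*}$ minimizes $I_{\infty}$ over $\mathcal{A}$ via the $\Gamma$-convergence-style inequality $I_{\infty}[u^{*}]\le\liminf_n \tfrac12\int_{\Omega\setminus\overline\omega}(\mathbb{C}^{(0)}e(u_n),e(u_n))\le I_n[v]=I_{\infty}[v]$ for $v\in\mathcal{A}$. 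Your route is cleaner and avoids the flux computation, but note that Theorem \ref{theorem5.6} alone does not yield $u^{*}=u$: it gives uniqueness of solutions of \eqref{5.4}, so you must also invoke the fact that any minimizer of $I_{\infty}$ on $\mathcal{A}$ solves \eqref{5.4} (the content and proof of Theorem \ref{thm_minimizer_infity}), which is exactly where the flux conditions get verified; so the Green's identity work is not eliminated, only delegated to a result already proved in the paper. Two small points to tidy: the ellipticity constant on $\omega_s$ is $\min\{2\mu_n^{(s)},d\lambda_n^{(s)}+2\mu_n^{(s)}\}$ (cf.\ \eqref{coeff2}), so the bound should read $\int_{\omega_s}|e(u_n)|^2\le C/\min\{2\mu_n^{(s)},d\lambda_n^{(s)}+2\mu_n^{(s)}\}\to0$ rather than $C/\mu_n^{(s)}$; and at the very end the First Korn inequality controls only $\|\nabla(u_n-u)\|_{L^2}$, so you should add Poincar\'e's inequality for $u_n-u\in H^1_0(\Omega;\mathbb{R}^d)$ (or compact embedding) to conclude full $H^1$ convergence — the paper is equally terse here, and your treatment of the strong-convergence step is otherwise more detailed than the paper's.
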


\begin{proof}
{\bf Step 1.}
 Prove that $\{u_{n}\}$ weakly converges in $H^{1}(\Omega; \mathbb{R}^{d})$
to a solution  $u$
of   \eqref{5.4}.

Due to the uniqueness of the solution to  \eqref{5.4},
 we only need to show that after passing to a subsequence,
$\{u_n\}$ weakly converges in
$H^{1}(\Omega; \mathbb{R}^{d})$ to a solution $u$
 of \eqref{5.4}.

Let $\eta\in{H}^{1}_{\varphi}(\Omega; \mathbb{R}^{d})$ be
 fixed and satisfy
 $\eta\equiv0$ on $\overline{\omega}$.
Since
 $u_{n}$ is the minimizer of $I_{n}$ in $H^{1}_{\varphi}(\Omega
;\mathbb{R}^{d}
)$, we have, for some constant $C$ independent of $n$,
$$\frac{1}{C}\|e(u_{n})\|^{2}_{L^{2}(\Omega)}\leq\,I_{n}[u_{n}]\leq\,I_{n}[\eta]
=\frac{1}{2}\int_{\Omega\setminus\overline{\omega}}\left(\mathbb{C}^{(0)}e(\eta),e(\eta)\right)dx
\leq\,C\|\eta\|^{2}_{H^{1}(\Omega)}.$$
Using the Second Korn inequality and the fact that
$u_n=\varphi$ on $\partial \Omega$, we obtain
$$\|u_{n}\|_{H^{1}(\Omega)}\leq\,C,$$
and therefore, along a subsequence,
$$u_{n}\rightharpoonup{u}\quad\mbox{in}~H_{\varphi}^{1}(\Omega;
 \mathbb{R}^{d}),~\mbox{as}~n\rightarrow\infty.$$

Next we show that
 $u$ is a solution of equation \eqref{5.4}. In fact, we only need to prove the following three conditions:
\begin{align}
\nabla\cdot\left(\mathbb{C}^{(0)}e(u)\right)=0,\quad\quad&\mbox{in}~\Omega\setminus\overline{\omega},\label{5.6}\\
e(u)=0,\quad\quad&\mbox{in}~\omega,\label{5.7}\\
\int_{\partial\omega_{s}}\frac{\partial{u}}{\partial\nu_{0}}\big|_{+}\cdot\psi^{\alpha}=0,\quad\quad&1\leq{s}\leq{m},~
1\le \alpha\le d(d+1)/2.\label{5.8}
\end{align}

(i) Since $u_{n}\in{H}^{1}(\Omega
;\mathbb{R}^{d}
)$ is a solution of equation \eqref{5.1} and $u_{n}\rightharpoonup{u}$ in $H^{1}_{\varphi}(\Omega
;\mathbb{R}^{d}
)$, we have, for any $\phi\in{C}^{\infty}_{c}(\Omega\setminus\overline{\omega}
;\mathbb{R}^{d}
)$, that
$$0=\int_{\Omega\setminus\overline{\omega}}\left(\mathbb{C}^{(0)}e(u_{n}),e(\phi)\right)dx
\rightarrow\int_{\Omega\setminus\overline{\omega}}\left(\mathbb{C}^{(0)}e(u),e(\phi)\right)dx.$$
Therefore
$$\int_{\Omega\setminus\overline{\omega}}\left(\mathbb{C}^{(0)}e(u),e(\phi)\right)dx=0,
\quad\forall~\phi\in{C}^{\infty}_{c}(\Omega\setminus\overline{\omega}),$$
that is \eqref{5.6}.

(ii) Let $\eta\in{H}^{1}_{\varphi}(\Omega
;\mathbb{R}^{d}
)$ be fixed and satisfy $\eta\equiv0$ on $\overline{\omega}$, then since $u_{n}$ is a minimizer of $I_{n}$ in ${H}^{1}_{\varphi}(\Omega
;\mathbb{R}^{d}
)$, we have
$$I_{n}[u_{n}]\leq\,I_{n}[\eta]
\leq\frac{1}{2}\int_{\Omega\setminus\overline{\omega}}\left(\mathbb{C}^{(0)}e(\eta),e(\eta)\right)dx\leq\,C.$$
On the other hand,
$$I_{n}[u_{n}]\geq\sum_{s=1}^{m}\min\{2\mu_{n}^{(s)},d\lambda_{n}^{(s)}+2\mu_{n}^{(s)}\}\int_{\omega_{s}}|e(u_{n})|^{2}dx.$$
Since $\mu_{n}^{(s)}\rightarrow\infty$ and $d\lambda_{n}^{(s)}+2\mu_{n}^{(s)}\rightarrow\infty$ as $n\rightarrow\infty$, we have
$$\|e(u_{n})\|_{L^{2}(\omega)}\rightarrow0,\quad\mbox{as}~n\rightarrow\infty.$$
By (1), $u_{n}\rightharpoonup{u}$ in $H^{1}(\Omega;\mathbb{R}^{d})$. Therefore
$$\|e(u)\|_{L^{2}(\omega)}=0,$$
i.e. $e(u)=0$ in $\omega$, which is \eqref{5.7}.

(iii) By (i) and (ii), $u$ satisfies \eqref{5.6} and is a linear combination of $\{\psi^{\alpha}\}$ on each $\partial\omega_{s}$, and is equal to $\varphi$ on $\partial\Omega$. Thus $u$ is smooth on
 $\partial{\omega}$. By the elliptic regularity theorems,
 $u\in{C}^{1}(\overline{\Omega\setminus\omega}
;\mathbb{R}^{d}
)
\cap {C}^{2}(\Omega\setminus \overline \omega
;\mathbb{R}^{d}
)$.
 For each $s=1,2,\cdots,m$, $1\le \alpha
\le d(d+1)/2$,
we construct a function $\rho\in{C}^{2}(\overline{\Omega\setminus\omega};\mathbb{R}^{d})$ such that $\rho=\psi^{\alpha}$ on $\partial\omega_{s}$, $\rho=0$ on $\partial\omega_{t}$ for $t\neq{s}$, and $\rho=0$ on $\partial\Omega$. By Green's identity, we have the following:
\begin{align*}
0&=-\int_{\Omega\setminus\overline{\omega}}\nabla\cdot\left(\mathbb{C}^{(0)}e(u_{n})\right)\cdot\rho dx\\
&=\int_{\Omega\setminus\overline{\omega}}\left(\mathbb{C}^{(0)}e(u_{n}),e(\rho)\right)dx
+\int_{\partial\omega_{s}}\frac{\partial{u}_{n}}{\partial\nu_{0}}\big|_{+}\cdot\psi^{\alpha}\\
&=\int_{\Omega\setminus\overline{\omega}}\left(\mathbb{C}^{(0)}e(u_{n}),e(\rho)\right)dx
+\int_{\partial\omega_{s}}\frac{\partial{u}_{n}}
{\partial\nu_{0}}\big|_{-}\cdot\psi^{\alpha}\\
&=\int_{\Omega\setminus\overline{\omega}}
\left(\mathbb{C}^{(0)}e(u_{n}),e(\rho)\right)dx.
\end{align*}
Similarly,
\begin{align*}
0=-\int_{\Omega\setminus\overline{\omega}}\nabla\cdot\left(\mathbb{C}^{(0)}e(u)\right)\cdot\rho dx
=\int_{\Omega\setminus\overline{\omega}}\left(\mathbb{C}^{(0)}e(u),e(\rho)
\right)dx
+\int_{\partial\omega_{s}}\frac{\partial{u}}{\partial\nu_{0}}\big|_{+}
\cdot\psi^{\alpha}.
\end{align*}
Since $u_{n}\rightharpoonup{u}$ in $H^{1}(\Omega)$, it follows that
$$0=\int_{\Omega\setminus\overline{\omega}}\left(\mathbb{C}^{(0)}e(u_{n}),e(\rho)\right) dx
\rightarrow\int_{\Omega\setminus\overline{\omega}}\left(\mathbb{C}^{(0)}e(u),e(\rho)\right)dx.
$$
Thus
$$\int_{\partial\omega_{s}}\frac{\partial{u}}{\partial\nu_{0}}\big|_{+}\cdot\psi^{\alpha}=0,\quad\,1\leq{s}\leq{m},~ 1\le \alpha\le d(d+1)/2.$$
Step 1 is completed.

{\bf Step 2.} Prove (\ref{abc1}) and (\ref{abc2}).

Since $u_{n}$ is a minimizer of $I_{n}$ and $e(u)=0$ in $\omega$, we have
$$I_{n}[u_{n}]\leq\,I_{n}[u]=\,I_{\infty}[u].$$
Thus
$$\limsup_{n\rightarrow\infty}I_{n}[u_{n}]\leq\,I_{\infty}[u].$$
On the other hand,
since $e(u)=0$ and $u_{n}\rightharpoonup{u}$ in $H^{1}(\Omega; \mathbb{R}^{d})$,
\begin{eqnarray*}
I_{\infty}[u]&=&\frac{1}{2}\int_{\Omega\setminus\overline{\omega}}\left(\mathbb{C}^{(0)}e(u),e(u)\right)dx
\\
&\leq&
\liminf_{n\rightarrow\infty}\frac{1}{2}
\int_{\Omega\setminus\overline{\omega}}\left(\mathbb{C}^{(0)}e(u_{n}),e(u_{n})\right)dx
\\
&\leq&
\liminf_{n\rightarrow\infty}\frac{1}{2}
\int_{\Omega\setminus\overline{\omega}}\left(\mathbb{C}^{(0)}e(u_{n}),e(u_{n})\right)dx
+
\limsup_{n\rightarrow\infty}\frac{1}{2}
\sum_{s}\int_{\omega_s}\left(\mathbb{C}_n^{(s)}e(u_{n}),e(u_{n})\right)dx
\\
&\leq&
\limsup_{n\rightarrow\infty}I_{n}[u_{n}].
\end{eqnarray*}
With the help of the first Korn$'$s inequality, we easily deduce
(\ref{abc2}) and (\ref{abc1}) from the above.
The proof of Theorem \ref{thm_5.7} is completed.
\end{proof}

\noindent{\bf{\large Acknowledgments.}} The first author was
partially supported by NNSF (11071020) (11371060), SRFDPHE (20100003110003),  and Beijing Municipal Commission of Education for the Supervisor of Excellent Doctoral Dissertation (20131002701). The second author was partially supported by SRFDPHE (20100003120005) and NNSF (11201029).
The research of the third author is partially supported by NSF grant
DMS-1065971, DMS-1203961.
All authors were partially supported by the Fundamental Research Funds
for the Central Universities. We thank an anonymous referee for helpful suggestions which improve the exposition.

\vspace{5mm}


\bibliographystyle{amsplain}
\bibliography{References}

\begin{thebibliography}{99}
\bibitem{adn1} S. Agmon, A. Douglis and L. Nirenberg,
Estimates near the boundary for solutions
of elliptic partial differential equations
 satisfying general boundary conditions.
 I, Comm. Pure Appl. Math. 12 (1959), 623-727.

\bibitem{adn}  S. Agmon,
 A. Douglis and L. Nirenberg, Estimates near the boundary for solutions of elliptic partial differential equations satisfying general boundary conditions. II, Comm. Pure Appl. Math.  17  (1964), 35-92.

\bibitem{ackly} H. Ammari,
 G. Ciraolo, H. Kang,
 H. Lee and
 K. Yun, Spectral analysis of the Neumann-Poincare
 operator and characterization of the stress concentration
 in anti-plane elasticity. Arch. Ration. Mech. Anal.  208  (2013),
 275-304.

\bibitem{adkl}  H. Ammari,
 H. Dassios,
 H. Kang and M. Lim, Estimates for the electric field in the
presence of adjacent perfectly conducting spheres. Quat. Appl. Math. 65
(2007), 339-355.



\bibitem{agkl} H. Ammari,
 P. Garapon,
 H. Kang and H. Lee, A method of biological tissues elasticity reconstruction using magnetic resonance elastography measurements. Quart. Appl. Math. 66 (2008), no. 1, 139-175.

\bibitem{akkl} H. Ammari, H. Kang, K. Kim and H. Lee, Strong convergence of the solutions of the linear elasticity and uniformity of asymptotic expansions in the presence of small inclusions. J. Differential Equations  254  (2013),  4446-4464.

\bibitem{aklll}  H. Ammari,
 H. Kang,
 H. Lee,
 J. Lee and M. Lim, Optimal estimates for the electrical
field in two dimensions. J. Math. Pures Appl. 88 (2007), 307-324.



\bibitem{AKLLZ}
H. Ammari, H. Kang,  H. Lee, M. Lim and
H. Zribi,
Decomposition theorems and fine estimates
for electrical fields in the presence of closely located circular inclusions,
J. Differential Equations  247  (2009),   2897-2912.

\bibitem{akl} H. Ammari, H. Kang and M. Lim, Gradient estimates
 to the conductivity problem. Math.
Ann. 332 (2005), 277-286.




\bibitem{ba} I. Babu\v{s}ka, B. Andersson,
 P. Smith and K. Levin, Damage analysis of fiber composites. I. Statistical analysis on fiber scale.
Comput. Methods Appl. Mech. Engrg. 172 (1999), 27-77.



\bibitem{bly1} E.S. Bao,
 Y.Y. Li and B. Yin, Gradient estimates for the perfect conductivity problem. Arch. Ration. Mech. Anal. 193 (2009), 195-226.

\bibitem{bly2} E.S. Bao,
 Y.Y. Li and B. Yin, Gradient estimates for the perfect and insulated conductivity problems with multiple inclusions. Comm. Partial Differential Equations 35 (2010), 1982-2006.



\bibitem{bt0}  E. Bonnetier and F. Triki,
Pointwise bounds on the gradient
 and the spectrum of the Neumann-Poincare
 operator: the case of 2 discs,
  Multi-scale and high-contrast PDE: from modeling,
to mathematical analysis, to inversion,  81-91, Contemp. Math., 577, Amer. Math. Soc., Providence, RI, 2012.

\bibitem{bt} E. Bonnetier and F. Triki, On the spectrum of the
Poincare variational problem for two close-to-touching inclusions in $2D$,
 Arch. Ration. Mech. Anal. 209 (2013),  541-567.

\bibitem{bv} E. Bonnetier and M. Vogelius, An elliptic regularity result for a
composite medium with ``touching'' fibers of circular cross-section,
SIAM J. Math. Anal. 31  (2000), 651-677.

\bibitem{byl} M. Briane, Y. Capdeboscq and
 L. Nguyen, Interior regularity estimates in high conductivity homogenization and application. Arch. Ration. Mech. Anal. 207 (2013), no. 1, 75-137.


\bibitem{bc} B. Budiansky and
 G.F. Carrier, High shear stresses in stiff fiber composites,
 J. App. Mech. 51 (1984), 733-735.



\bibitem{FKNN} J.S. Fan, K. Kim, S. Nagayasu and G.  Nakamura,
A gradient estimate for solutions to parabolic equations with discontinuous
coefficients, Electronic Journal of Differential Equations 2013 (2013),
No. 93, 1-24.



\bibitem{gn} Y. Gorb  and A. Novikov, Blow-up of solutions to a p-Laplace equation, Multiscal Model. Simul. 10 (2012), 727-743.

\bibitem{kk} H. Kang and E. Kim, Estimation of stress in the presence of closely located elastic inclusions: A numerical study. Preprint. (2012).

\bibitem{kly}  H. Kang,
 M. Lim and K. Yun, Asymptotics and computation of the solution to the conductivity equation in the presence of adjacent inclusions with extreme conductivities. J. Math. Pures Appl. (9) 99 (2013), 234-249.

\bibitem{kly2}  H. Kang,
 M. Lim and K. Yun,
 Characterization of the electric field concentration between two adjacent spherical perfect conductors.
SIAM J. Appl. Math. 74 (2014), 125-146.


\bibitem{keller1} J. B. Keller, Conductivity of a medium containing a dense array of perfectly conducting spheres or cylinders or nonconducting cylinders. J. Appl. Phys. 34 (1963), 991-993.

\bibitem{keller2} J.B. Keller, Stresses in narrow regions, Trans. ASME J. Appl. Mech. 60 (1993), 1054-1056.

\bibitem{Li-Li} H.G. Li and Y.Y. Li,
Gradient estimates for parabolic systems from composite
material. arXiv:1105.1437v1. (2011).

\bibitem{llby} H.G. Li,
 Y.Y. Li,
 E.S. Bao and B. Yin, Derivative estimates of solutions of elliptic systems in narrow regions. To appear in Quart. Appl. Math. http://dx.doi.org/10.1090/S0033-569X-2014-01339-0.

\bibitem{ln} Y.Y. Li and L. Nirenberg, Estimates for elliptic systems from composite material. Comm. Pure Appl. Math. 56 (2003),
 892-925.

\bibitem{lv} Y.Y. Li and M. Vogelius, Gradient estimates for solutions to divergence form elliptic equations with discontinuous coefficients.
Arch. Rational Mech. Anal. 135 (2000), 91-151.

\bibitem{ly}  M. Lim and
 K. Yun, Blow-up of electric fields between closely spaced spherical perfect conductors.
Comm. Partial Differential Equations  34  (2009),  1287-1315.


\bibitem{ly2} M. Lim and
 K. Yun, Strong influence of a small fiber on shear stress in fiber-reinforced composites. J. Diff. Equa. 250 (2011), 2402-2439.


\bibitem{m} X. Markenscoff, Stress amplification in vanishingly small geometries. Computational Mechanics 19 (1996), 77-83.

\bibitem{MNT}
A. Moradifam, A. Nachman and
 A. Tamasan,  Conductivity imaging from one interior measurement
 in the presence of perfectly conducting and insulating inclusions,
 SIAM J. Math. Anal.  44  (2012),   3969-3990.

\bibitem{osy} O.A. Oleinik, A.S. Shamaev and G.A. Yosifian, Mathematical
problems in elasticity and homogenization. Studies in Mathematics
and Its Applications, 26. North-Holland, Amsterdam, (1992).

\bibitem{x} J.G. Xiong, $C^{1,1}$ estimates for elliptic equations with partial and piecewise continuous coefficients. Methods Appl. Anal. 18 (2011),
 373-389.



\bibitem{BaoXiong}
J.G. Xiong and J.G. Bao,
 Sharp regularity for elliptic systems associated
with transmission problems,
 Potential Anal. 39 (2013),  169-194.

\bibitem{y1} K. Yun, Optimal bound on high stresses occurring between stiff fibers with arbitrary shaped cross-sections. J. Math. Anal. Appl. 350 (2009), 306-312.

\bibitem{y2} K. Yun, Estimates for electric fields blown up between closely adjacent conductors with arbitrary shape. SIAM J. Appl. Math. 67 (2007),  714-730.


\end{thebibliography}

\end{document}